\renewcommand{\thesection}{\arabic{section}}
\newtheorem{theorem}{Theorem}[section]
\newtheorem{lemma}[theorem]{Lemma}
\newtheorem{prop}[theorem]{Proposition}
\newtheorem{defi}[theorem]{Definition}
\renewcommand{\theequation}{\thesection .\arabic{equation}}
\let\subs\subsection
\renewcommand\subsection{\setcounter{equation}{0}
\gdef\theequation{\thesubsection \arabic{equation}}\subs}
\let\sect\section
\renewcommand\section{\setcounter{equation}{0}
\gdef\theequation{\thesection .\arabic{equation}}\sect}
\newcommand{\bbR}{{\mathbb{R}}}
\newcommand{\bbT}{\mathbb{T}}
\newcommand{\bbC}{\mathbb{C}}
\newcommand{\bbZ}{\mathbb{Z}}
\newcommand{\bbD}{\mathbb{D}}
\newcommand{\bbE}{\mathbb{E}}
\newcommand{\cA}{{\mathcal{A}}}
\newcommand{\cB}{{\mathcal{B}}}
\newcommand{\cD}{{\mathcal{D}}}
\newcommand{\cR}{{\mathcal{R}}}
\newcommand{\cM}{{\mathcal{M}}}
\newcommand{\cS}{{\mathcal{S}}}
\newcommand{\cP}{{\mathcal{P}}}
\newcommand{\IR}{{\mathbb{R}}}
\newcommand{\IZ}{{\mathbb{Z}}}
\newcommand{\zv}{\IZ^\nu}
\newcommand{\be}{\begin{equation}}
\newcommand{\ee}{\end{equation}}
\newcommand{\dist}{\mathop{\rm{dist}}}
\newcommand{\sgn}{\mathop{\rm{sgn}}}
\newcommand{\ve}{\varepsilon}
\def\zero{{(0)}}
\renewcommand{\Re}{\mathop{\mathrm{Re}}}
\renewcommand{\Im}{\mathop{\mathrm{Im}}}
\newcommand{\ol}[1]{\overline{#1}}
\newcommand{\bbN}{\mathbb{N}}
\newcommand{\bbQ}{\mathbb{Q}}
\begin{document}

\title{Comb domains of Schr\"odinger operators with small quasiperiodic potentials}

\author{Ilia Binder}

\address{Department of Mathematics, University of Toronto, Bahen Centre, 40 St. George St., Toronto, Ontario, CANADA M5S 2E4}

\email{ilia@math.toronto.edu}

\thanks{I.\ B.\ was supported in part by an NSERC Discovery Grant}

\author{David Damanik}

\address{Department of Mathematics, Rice University, 6100 S. Main St. Houston TX 77005-1892, U.S.A.}

\email{damanik@rice.edu}

\thanks{D.\ D.\ was supported in part by NSF grants DMS--2054752 and DMS--2349919}

\author{Michael Goldstein}

\address{Department of Mathematics, University of Toronto, Bahen Centre, 40 St. George St., Toronto, Ontario, CANADA M5S 2E4}

\email{gold@math.toronto.edu}

\author{Milivoje Luki\'c}

\address{Department of Mathematics, Rice University, 6100 S. Main St. Houston TX 77005-1892, U.S.A.}

\email{milivoje.lukic@rice.edu}

\thanks{M.\ L.\ was supported in part by NSF grant DMS--2154563}

\begin{abstract}
We characterize spectra of Schr\"odinger operators with small quasiperiodic analytic potentials in terms of their comb domains, and study action variables motivated by the KdV integrable system.
\end{abstract}

\maketitle

\vspace{-10pt}
\begin{center}
	{\it  Dedicated to Michael Goldstein}
\end{center}

\tableofcontents

\section{Introduction}

For a one-dimensional Schr\"odinger operator
\begin{equation} \label{eq:SLPAI1-1}
[H_V u](x) = - u''(x) +  V(x) u(x), \quad x \in \IR
\end{equation}
with an almost periodic potential $V$, by gap labelling theory, the spectrum can be expressed in the form
\begin{equation}\label{eq:1spectrum}
\cS = [E_0 , \infty) \setminus \bigcup_{k \in \cM \cap (0,\infty) } (E^-_k, E^+_k),
\end{equation}
where $\cM$ is the frequency module of $V$ and $E_0 < E^-_k \le E^+_k < E^-_l$ whenever $k,l \in \cM$ with $0 < k < l$; compare Johnson-Moser \cite{JM} as well as \cite{DF22, DF23, J86}.  The $k$-th gap is said to be open when  $E^-_k < E^+_k$. If $V$ is periodic with period $T$, then $\cM = T^{-1} \bbZ$, which results in the band/gap structure of the spectrum. In this case, the connection between the sequence of gap lengths and smoothness of the potential is akin to a nonlinear Fourier transform \cite{Hochstadt,MO,Tr,Poschel}: the decay rate of gap lengths encodes the smoothness of the potential. If $V$ is not periodic, $\cM$ is dense in $\bbR$, and Cantor spectrum is common \cite{Eliasson, FJP02, Moser}; in fact, the spectrum of an almost periodic operator can have zero Lebesgue measure, and even zero Hausdorff dimension \cite{DFL}. Similar results, and in fact more refined and detailed ones, are known for discrete Schr\"odinger operators with almost periodic potentials; compare \cite{Avila09, ABD09, ABD12, AJ09, AJ10, ALSQ24, FV, GJY, GS11, HHSY25, P04, WZ17}. 

Conversely, it is of great interest to establish strong upper bounds on gap sizes when possible, with decay with respect to an appropriate indexing of $\cM \cap (0,\infty)$. For a class of limit-periodic potentials exponentially well approximated by periodic potentials, upper bounds were established by Pastur--Tkachenko \cite{PasturTkachenko1,PasturTkachenko2}, see also \cite{Chulaevskii} for this class of potentials.
In the quasiperiodic setting, Damanik--Goldstein \cite{DG1} derived exponentially decaying upper bounds for small analytic quasiperiodic operators with Diophantine frequencies, by a multiscale analysis on the dual group. This work led to thickness properties of the spectrum such as homogeneity and Craig-type conditions \cite{DGL3}, which in turn led to a description of the isospectral torus \cite{DGL2,DGL1} and applications to the KdV equation for small quasiperiodic initial data \cite{BDGL1, DG2}. See also related work on gap sizes and homogeneity \cite{DGSV,LYZZ} and on the KdV equation and the Toda flow \cite{BDLV18, ChapoutoKillipVisan, EVY, LukicYoung, VinnikovYuditskii, Tsugawa}.

The central object of this paper are Marchenko--Ostrovskii maps \cite{MO}, originally introduced in the periodic setting in order to provide a parametrization of spectra of periodic Schr\"odinger operators. Let us recall a more general definition given by Johnson--Moser \cite{JM}. 
Denote by  $m(x;z)$ the Weyl function of the half-line operator $-\partial_x^2+V$ on $[x,\infty)$, and denote $\bbC_+ = \{z \in\bbC \mid \Im z > 0 \}$. If $V$ is almost periodic, Johnson--Moser \cite{JM} proved that $m(x,z)$ is an almost periodic function of $x$ for any $z\in \bbC_+$, and introduced its mean value
\begin{equation}\label{wfunc}
w(z) = \bbE( m(x;z)) =  \lim_{X\to\infty} \frac 1X \int_0^X m(x;z) \,dx
\end{equation}
(in this paper, $\bbE$ always denotes the mean value of an almost periodic function of $x\in \bbR$, as above). 
The Marchenko--Ostrovskii map $w$ is a fundamental object of interest, since $-\Re w$ is the Lyapunov exponent, $\Im w$ is the rotation number, and $w$ is an integral transform of the density of states of $H_V$; the resulting connection between the Lyapunov exponent and density of states is the celebrated Thouless formula \cite{PasturFigotin}. We note also that a further generalization of  Marchenko--Ostrovskii maps to ergodic operators was studied by Kotani \cite{Kotani}, and that there is a corresponding theory even beyond the ergodic setting, which first appeared as the theory of Stahl--Totik regularity \cite{StahlTotik} in the orthogonal polynomial setting, and was extended to the  Schr\"odinger operator setting in \cite{EL}.

The formula \eqref{wfunc} defines $w$ on the set $\bbC \setminus [E_0, \infty)$, and $w$ is shown to be injective on this set, so it is a conformal bijection; we follow a standard convention to rotate the image clockwise and call the set
\[
\Omega = -iw( \bbC \setminus  [E_0, \infty))
\]
 the Marchenko--Ostrovskii domain corresponding to $V$. This provides a powerful connection between spectral theory and geometric function theory. On the set $\bbC \setminus [E_0, \infty)$, the function obeys the symmetry $\ol{w(\ol z)} = w(z)$ and the asymptotics
\begin{equation}\label{eqn2term}
w(z) = - \sqrt{-z} - \frac{\bbE(V)}{2\sqrt{-z}} + o( \lvert z \rvert^{-1/2} ), \qquad z \to -\infty,
\end{equation}
so the domain $\Omega$ determines the map $w$ up to an additive constant; if $w$ corresponds to $\Omega$, so does $w(\cdot - c)$, for $c \in\bbR$. Since $w$ determines the density of states measure, and the support of this measure is the spectrum $\cS$, the domain $\Omega$ also determines $\cS$ uniquely up to a constant. The constant corresponds to a constant shift in the potential and to a translation of the spectrum, and it can be fixed by setting $\bbE(V)=0$ (common in direct spectral theory \cite{DGL2,KP}) or by setting $E_0 = 0$ (common in inverse spectral theory \cite{EVY}).

In the periodic setting, the domain $\Omega$ is a comb domain of the form $\bbC_+ \setminus \bigcup_{n\in\bbZ \setminus \{0\}} (n/T, n/T + i h_n]$, with $h_{-n} = h_n \ge 0$. The slits $(n/T, n/T + i h_n]$ correspond to gaps of the spectrum, and Marchenko--Ostrovskii proved that Sobolev space conditions $V\in H^k(\bbT)$ are equivalent to weighted $\ell^2$ conditions on the slits $\sum_n (n^{k+1} h_n)^2 < \infty$. This has inspired further work in periodic settings \cite{Korotyaev,Tkachenko} and beyond; see \cite{EY,HurRemling,EremenkoSodin}. Beyond the periodic setting,  comb domains have also appeared in the characterization of a class of limit periodic potentials,  a class of reflectionless operators on homogeneous spectra \cite{PasturTkachenko2,SY1,SY2,GesztesyYuditskii}, and a class of limit periodic operators associated with quadratic iterations \cite{SodinYuditskii90}.

When the Lyapunov exponent is zero on the spectrum, the harmonic function $-\Re w$ on $\bbC \setminus \cS$ is precisely the Martin function for the Denjoy domain $\bbC \setminus \cS$ normalized by \eqref{eqn2term}. This connection between Martin functions and reflectionless operators plays a prominent role in the papers of Yuditskii and coauthors \cite{SY2,Yuditskii,DamanikYuditskii,EVY,KheifetsYuditskii,BLY2}.

In the first part of this paper, we characterize the Marchenko--Ostrovskii domains corresponding to small quasiperiodic operators with Diophantine frequencies. These are comb domains, and we show how the structure of the comb domain encodes the smallness and analyticity of the quasiperiodic potential. To proceed, we must precisely define the class of operators.

\begin{defi}
Let $0 < \kappa_0 \le 1$ and $\omega \in \mathbb{R}^\nu$ for some $\nu \in \mathbb{N}$. We denote by $\cB(\omega,\kappa_0)$ the Banach space of functions  $V: \mathbb{R} \to \mathbb{R}$  of the form
\begin{equation}\label{eq:samplingfunc}
V(x) = \sum_{n \in \zv} c(n) e^{2 \pi i n\omega x}\ 
\end{equation}
for which the norm
\begin{equation}\label{eq:weightedellinfty}
\lVert V \rVert_{\infty,\kappa_0}  = \sup_{n\in\zv} \lvert c(n) \rvert \exp( \kappa_0 \lvert n\rvert)
\end{equation}
is finite.
In particular, for $\ve> 0$, we denote by $\cP(\omega,\ve,\kappa_0)$ the set of $V \in \cB(\omega,\kappa_0)$ with $\lVert V \rVert_{\infty,\kappa_0} \le \ve$.
\end{defi}

It has been common to view $\mathcal{P}(\omega,\ve,\kappa_0)$ a metric space with the metric inherited from $L^\infty(\mathbb{R})$, but the weighted $\ell^\infty$ norm \eqref{eq:weightedellinfty} will be useful below. 
With this norm, the interior of $\cP(\omega,\ve,\kappa_0)$ is the set $\mathring\cP(\omega,\ve,\kappa_0)$ given by $\lVert V \rVert_{\infty,\kappa_0}  < \ve$.

We will assume that $\omega = (\omega_1, \dots, \omega_\nu) \in \mathbb{R}^\nu$ obeys the Diophantine condition
\begin{equation}\label{eq:1PAI7-5-85a}
|n \omega| \ge a_0 |n|^{-b_0}, \quad n \in \mathbb{Z}^\nu \setminus \{ 0 \}
\end{equation}
for some
\begin{equation}\label{eq:PAIombasicTcondition5a}
0 < a_0 < 1,\quad \nu < b_0 < \infty.
\end{equation}
In \eqref{eq:1PAI7-5-85a}, and everywhere else in this work, we use the $\ell^1$ norm on $\bbZ^\nu$:
\[
|n| = |(n_1, \dots, n_\nu)| = \sum_j |n_j|, \qquad n \in \zv.
\]

Potentials $V\in \cP(\omega,\ve,\kappa_0)$ were studied in \cite{DG1} via a multiscale analysis method, which used smallness, analyticity, and Diophantine frequency to derive much more precise description of the gap sizes and their distances. An extension of this multiscale analysis to Abelian groups \cite{DGL1}, combined with periodic approximation, was used to describe the isospectral torus of $V \in \cP(\omega,\ve,\kappa_0)$ \cite{DGL2}, to show that the corresponding spectrum is homogeneous \cite{DGL3}, and to study the KdV equation with this class of initial data \cite{BDGL1}.  

We will apply this multiscale analysis and periodic approximation approach to describe the comb domains:

\begin{theorem}\label{thmCombDirect} %\label{proptwosidedhm}
Assume that $\omega$ obeys the Diophantine condition
\eqref{eq:1PAI7-5-85a}, \eqref{eq:PAIombasicTcondition5a}. There exists $\ve_1 = \ve_1(a_0,b_0,\kappa_0) > 0$ such that, if $\ve < \ve_1$ and $V \in \mathcal{P}(\omega,\ve,\kappa_0)$, the corresponding Marchenko--Ostrovskii domain is a comb domain of the form
\begin{equation}\label{eqncombdomain1}
\bbC_+ \setminus  \bigcup_{ m \in \mathbb{Z}^\nu \setminus\{0\}} (m\omega, m\omega + ih_m]
\end{equation}
and the slit sizes $h_m$ obey $h_{-m} = h_m \ge 0$ and 
\[
h_m \le \varepsilon^{1/2} \exp\left( - \frac{\kappa_0}5 \lvert m \rvert \right).
\]
Moreover, each $h_m$ is a continuous function of $V \in \cP(\omega,\ve,\kappa_0)$.
\end{theorem}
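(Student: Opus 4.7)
My strategy combines three tools: the classical comb structure of Marchenko--Ostrovskii domains for periodic potentials \cite{MO}; the periodic-approximation method developed in \cite{DGL2, DGL3, BDGL1}; and the multiscale analysis of Damanik--Goldstein \cite{DG1}, which yields quantitative exponential bounds on gap widths and on the Lyapunov exponent inside each gap.

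I would first establish the comb structure. For $V \in \cP(\omega, \ve, \kappa_0)$ with $\ve$ sufficiently small, Johnson--Moser gap labeling applied with frequency module $\cM = \omega\cdot\zv$ gives
\[
\cS = [E_0, \infty) \setminus \bigcup_{\substack{m \in \zv \\ m\omega > 0}} (E_m^-, E_m^+),
\]
and on the gap $(E_m^-, E_m^+)$ the rotation number $\alpha = \Im w$ is constant with value $m\omega$. In this regime the Lyapunov exponent $L = -\Re w$ vanishes on $\cS$, a consequence of the analysis in \cite{DG1}. Writing $-iw = \alpha + iL$, the conformal map $-iw: \bbC \setminus [E_0, \infty) \to \Omega$ then carries spectral bands to intervals on $\bbR$ and carries each gap $(E_m^-, E_m^+)$ to a pair of vertical slits based at $\pm m\omega$, the two sides of the gap mapping to the two sides of these slits via the symmetry $\overline{w(\overline z)} = w(z)$. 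Both slits have height
\[
h_m = \max_{z \in [E_m^-, E_m^+]} L(z),
\]
with $h_{-m} = h_m$ by that same symmetry, yielding the form \eqref{eqncombdomain1}.

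The main work is then the height bound. I would approximate $V$ by periodic potentials $V^{(N)}$, obtained by replacing $\omega$ with rational approximants $\omega^{(N)}$ in the Fourier expansion \eqref{eq:samplingfunc} (possibly with a frequency truncation). For each such $V^{(N)}$ the Marchenko--Ostrovskii domain $\Omega^{(N)}$ is an explicit comb, and the multiscale analysis of \cite{DG1}, extended to the group setting in \cite{DGL1} and implemented via periodic approximation in \cite{DGL2}, should provide bounds $h_m^{(N)} \le C\ve \exp(-c|m|)$ uniform in $N$, with $C$ and $c$ depending only on $a_0, b_0, \kappa_0$. Passing to the limit $N \to \infty$ using convergence of the Weyl $m$-functions (and hence $w^{(N)} \to w$) preserves the bound, and choosing $\ve_1$ small enough absorbs the prefactor $C\ve$ into the exponential, yielding $h_m \le \ve^{1/2} \exp(-\kappa_0 |m|/5)$.

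Continuity of $h_m$ in $V$ follows from continuity of $w$ (hence of both the rotation number and the Lyapunov exponent) with respect to $V$ in the $\cB(\omega, \kappa_0)$ norm, combined with continuous dependence of the gap endpoints on $V$. The main technical obstacle lies in uniformly controlling the multiscale estimates across the periodic approximations, since the Diophantine constants of $\omega^{(N)}$ degrade as $N \to \infty$; this must be handled by the scale bookkeeping developed in \cite{DGL2, DGL3, BDGL1}, here adapted so as to track the Lyapunov exponent on each gap in addition to the gap width itself.
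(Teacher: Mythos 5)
Your high-level strategy — periodic approximation, multiscale estimates, convergence of Weyl/Marchenko--Ostrovskii maps — matches the paper's route. But there are two concrete gaps in the argument as stated.

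First, the conversion from gap-size bounds to slit-height bounds is treated as if it were already available. The multiscale analysis of \cite{DG1, DGL1, DGL2} controls the gap sizes $\gamma_m = E_m^+ - E_m^-$, not the heights $h_m = \max_{E_m^- \le E \le E_m^+}(-\Re w)$, and these are genuinely different quantities. The paper's key new step here (Lemma~\ref{lemma:productloglogm}) is a two-sided estimate
\[
\tfrac 12 A(m)^{-1}\gamma_m \le h_m \le 2 A(m)\gamma_m,
\]
obtained by integrating $|w'|$ over the gap and controlling the infinite product \eqref{thetaprimeproduct} factor by factor, with $A(m) \le C|m|^{C\log_2\log_2|m|}$. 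Your proposal gestures at "tracking the Lyapunov exponent on each gap in addition to the gap width," which is the right instinct, but you'd need to produce precisely this kind of quantitative comparison; it is not a straightforward corollary of the cited works.

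Second, your continuity argument for $h_m$ is too quick. Proposition~\ref{prop:wconvoffboundary} (continuity of $w$ in $V$) is a statement about convergence on $\bbC\setminus\bbR$, i.e.\ away from the spectrum. But $h_m$ is a boundary value of $-\Re w$ at the critical point $\lambda_m$ inside the gap, and pointwise convergence of analytic functions in the open upper half plane does not by itself force convergence of boundary values. The paper handles this with geometric function theory (Section~\ref{sectionGeometric}): a uniform local-connectedness estimate on the combs, which yields equicontinuity of the conformal maps on $\overline{\bbC_+}$ via a uniform Carath\'eodory theorem, and then equivalence between convergence of the $\Theta_n$ and convergence of the $h_m$ (Proposition~\ref{propMOconvergence}). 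Without some substitute for this boundary equicontinuity, your continuity claim is unjustified. (The same issue implicitly affects the passage to the limit $N\to\infty$ for the height bound: convergence of $w^{(N)}\to w$ on $\bbC_+$ does not directly transfer height bounds to the limit without boundary control; the paper's route is to prove the height bound for each approximant and combine with kernel/equicontinuity arguments, or to use the product formula \eqref{wderivativeproduct} whose terms converge uniformly.)
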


Recall further that the isospectral torus $\cR(\cS)$ is defined as the set of reflectionless operators with spectrum $\cS$, and that the torus of Dirichlet data is the product of circles obtained as double covers of the gap closures,
\[
\cD(\cS) = \prod_{\substack{ k \in \cM \cap (0,\infty) \\ E_k^- < E_k^+}} C_k, \quad C_k =  ([E_k^-, E_k^+] \times \{-1,+1\} ) /_{(E_k^\pm, -1) \sim (E_k^\pm,+1)}.
\]
If $\cS$ is homogeneous and has finite gap length, by Sodin--Yudiskii \cite{SY2,SY1}, the isospectral torus is parametrized by the Dirichlet data and consists of almost periodic Schr\"odinger operators. A priori, the frequency module can be read off from the set of bases of the slits; in particular, quasiperiodicity with frequency $\omega$ can be read off from \eqref{eqncombdomain1}. However, quantitative information about the analyticity and smallness of the sampling function is more specialized, and this is the subject of the following theorem:

\begin{theorem}\label{thmCombInverse}
Assume that $\omega$ obeys the Diophantine condition
\eqref{eq:1PAI7-5-85a}, \eqref{eq:PAIombasicTcondition5a}. There exists $\ve^{(0)}(a_0,b_0,\kappa_0) > 0$ such that the following holds. Let $h_m \ge 0$, indexed by $m\in \mathbb{Z}^\nu \setminus \{0\}$, obey the conditions $h_m = h_{-m}$ and 
\[
h_m \le \ve' \exp\left( - \kappa \lvert m\rvert \right)
\]
with $\ve' < \ve^{(0)}$ and $\kappa \ge 5 \kappa_0$. Let $\cS$ be the corresponding spectrum with $\min \cS = 0$.  For every $\mu \in \cD(\cS)$, there is a corresponding potential $V \in \cR(\cS)$ with $V \in \cP(\omega,(\ve')^{1/4},  {\kappa}/ 3)$.

In particular, $\cR(\cS) \subset \mathcal{P}(\omega,(\ve')^{1/4},  {\kappa}/ 3)$.
\end{theorem}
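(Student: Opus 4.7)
The plan is to invert the Marchenko--Ostrovskii map constructively: starting from the slit data $\{h_m\}$, I define the comb domain \eqref{eqncombdomain1}, read off the spectrum $\cS$ with $\min\cS=0$, and build the reflectionless potential $V$ associated to a given $\mu\in\cD(\cS)$ as a limit of finite-gap (hence quasiperiodic/periodic) potentials obtained from truncated comb data. The target estimate $V\in\cP(\omega,(\ve')^{1/4},\kappa/3)$ then has to be extracted from uniform Fourier estimates along the approximating sequence.

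\textbf{Step 1: finite truncations and periodic approximations.} For each $N$, let $\cS_N$ be the spectrum corresponding to the comb domain obtained by keeping only the slits with $|m|\le N$; this is a finite-gap set whose isospectral torus $\cR(\cS_N)$ is parametrized by a finite-dimensional torus of Dirichlet data via the classical (Its--Matveev/McKean--van Moerbeke) theta-function formulas. Restrict $\mu$ to $\prod_{0<|m|\le N} C_m\subset \cD(\cS_N)$ to select a reflectionless $V^{(N)}\in\cR(\cS_N)$. The set of slit bases $\{m\omega: 0<|m|\le N\}$ generates a (possibly rational) subgroup of $\bbR$, so $V^{(N)}$ is either quasiperiodic with frequency module contained in $\bbZ\omega$ or a limit of such. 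To obtain genuinely periodic approximants, I would further replace $\omega$ by a Diophantine-respecting rational approximant $\omega^{(N)}\to\omega$ (as done in \cite{DGL1,DGL2}), producing a periodic $V^{(N)}$ of large period, and pass to the $N\to\infty$ limit at the end.

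\textbf{Step 2: Fourier-coefficient bounds via the comb.} The key quantitative step is to estimate the Fourier coefficient $c^{(N)}(n)$ of $V^{(N)}$ at frequency $n\omega^{(N)}$ in terms of the slit heights $\{h_m\}_{|m|\le N}$. Here I would combine two ingredients. First, the asymptotic expansion of the Marchenko--Ostrovskii map $w$ at $-\infty$, of which \eqref{eqn2term} is the leading order, identifies each Fourier coefficient of $V$ as an explicit contour integral of $w(z)$ (equivalently of $\log$ of the conformal map of the comb domain) around the union of slits; the $n$-th coefficient is extracted by an oscillatory integral with phase $n\omega$. Second, the multiscale analysis of Damanik--Goldstein \cite{DG1} and its group-theoretic extension in \cite{DGL1, DGL2}---which is precisely what produces Theorem~\ref{thmCombDirect} at the quantitative level---gives, in its inverse form, control of these integrals by the $h_m$. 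Shifting contours into the region where each slit of height $h_m\le \ve' e^{-\kappa|m|}$ contributes a factor bounded by $h_m^{1/2}$ together with a Cauchy-type oscillatory factor $e^{-\kappa|n|/3}$ (after accounting for the separation of slit bases by the Diophantine condition \eqref{eq:1PAI7-5-85a}) will yield the bound
\[
|c^{(N)}(n)|\le (\ve')^{1/4}\exp(-\tfrac{\kappa}{3}|n|),
\]
uniformly in $N$.

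\textbf{Step 3: passage to the limit and continuity.} Given the uniform weighted $\ell^\infty$ bound from Step~2, a diagonal/compactness argument in $\cB(\omega,\kappa/3)$ extracts a limit $V\in\cP(\omega,(\ve')^{1/4},\kappa/3)$. The continuity assertion in Theorem~\ref{thmCombDirect} (together with Theorem~\ref{thmCombDirect} itself applied to $V$) identifies the comb domain of $V$ with the given one, so $V\in\cR(\cS)$, and the Dirichlet data of $V$ coincides with $\mu$ because it does for each $V^{(N)}$ by construction and the Dirichlet data depends continuously on the potential. The final statement $\cR(\cS)\subset \cP(\omega,(\ve')^{1/4},\kappa/3)$ follows since every point in $\cR(\cS)$ corresponds to some $\mu\in\cD(\cS)$.

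\textbf{Main obstacle.} The analytic heart of the argument is Step~2: converting the exponential decay of slit heights into exponential decay of Fourier coefficients with the correct constants. The loss from $\kappa$ to $\kappa/3$ reflects the need to shift contours through a region where small divisors $|n\omega - m\omega|$ are controlled by the Diophantine bound, and the square-root loss from $\ve'$ to $(\ve')^{1/4}$ reflects that each slit contributes via $\sqrt{h_m}$ (from the conformal map near a slit tip) and a further square root arises from composition with the multiscale resonance-avoidance scheme. Implementing this cleanly---uniformly in the truncation $N$ and in the periodic approximation $\omega^{(N)}$ of $\omega$---is where I expect the bulk of the technical work.
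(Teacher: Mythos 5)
Your Steps~1 and~3 (truncation, periodic approximation via $\omega^{(r)}$, uniform weighted-$\ell^\infty$ bounds, and compactness to extract a limit, followed by identification of the limit's comb domain via Prop.~\ref{propMOconvergence}) agree in spirit with the paper's argument: the paper indeed starts from Marchenko's existence of a periodic potential with prescribed slits and Dirichlet data, applies a quantitative inverse result (Theorem~\ref{thmtildef}(ii)) to bound its Fourier coefficients, and passes to the limit using convergence of Dubrovin flows (Lemma~\ref{propconvofdubrovinflows}) and kernel convergence of conformal maps.

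The genuine gap is in Step~2, and it is not merely ``the bulk of the technical work'' but a conceptual error. You propose to express the Fourier coefficient $c^{(N)}(n)$ of $V^{(N)}$ as ``an explicit contour integral of $w(z)$ around the union of slits,'' and then estimate it by contour shifting. But $w$ depends only on the spectrum, while the potential $V^{(N)}$ depends additionally on the Dirichlet data: every point of the isospectral torus $\cR(\cS_N)$ has the same $w$. The asymptotic expansion of $w$ at $-\infty$ recovers the spatially averaged KdV invariants (moments of $V$), not individual Fourier coefficients. So there is no such contour formula, and the promised $e^{-\kappa|n|/3}$ decay and $h_m^{1/2}$ factors have no principled derivation from this route.

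What the paper does instead is considerably more indirect. The quantitative inverse step (Theorem~\ref{thmtildef}(ii), proved via Proposition~\ref{proptwosidedhm}(2)) chains (a)~a two-sided comparison $h_{\mathfrak m}\asymp A(\mathfrak m)^{\pm 1}\gamma_{\mathfrak m}$ between slit heights and gap lengths, obtained from the product formula \eqref{thetaprimeproduct} for $w'$ (Lemma~\ref{lemma:productloglogm}), with (b)~the multiscale inverse estimate of \cite{DGL2} going from gap lengths to Fourier coefficients (Theorem~\ref{thmtildeb}(ii)). The crux is that \emph{both} of these ingredients require an a priori hypothesis $\tilde V\in\cP(\tilde\omega,\ve,\kappa_0,\alpha_0)$ on the Fourier coefficients; without it, the estimates simply do not apply. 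The paper removes this a priori hypothesis by an open--closed connectedness argument on the connected parameter set $\cS(\ve,\kappa)\subset\ell_1^2$, using the Marchenko real-analytic diffeomorphism $\Upsilon$ on even potentials (item (d) of the list of periodic facts), followed by a second open--closed argument over the Dirichlet torus (item (e)) to pass from even potentials to arbitrary ones. These bootstrap arguments, together with the $\alpha\to 1$ limit in the subexponential scale, are the actual engine converting slit decay into Fourier decay, and none of this appears in your proposal. Without them, Step~2 cannot be completed.

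Two smaller points: the restriction $\mu\mapsto \mu|_{\{|m|\le N\}}$ in your Step~1 is not how the paper builds its approximating sequence---the paper keeps the full comb data and changes only $\omega\to\omega^{(r)}$, then uses Marchenko's inverse theorem on the \emph{infinite} set of slits for each periodic $\omega^{(r)}$ (facts (a)--(b)); and in Step~3 the claim that ``the Dirichlet data depends continuously on the potential'' needs to be routed through the Dubrovin flow (Lemma~\ref{propconvofdubrovinflows}), since $\mathcal D(\cS_n)\to\mathcal D(\cS)$ is not a continuity statement in a fixed space.
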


The proof consists of constructing, for any Dirichlet data, a quasiperiodic potential $V \in \mathcal{P}(\omega,(\ve')^{1/4},\frac {\kappa}3)$ with the desired comb domain and Dirichlet data. The explicit constants in these theorems are not optimized.

In the second part of the paper, we derive some differentiability results motivated by the inverse spectral theory of periodic operators. By general principles, gap edges are $1$-Lipshitz with respect to the $L^\infty$-norm. We will consider differentiability of gap edges with respect to the potential, expressed in the language of Fr\'echet derivatives; for general background see \cite[Appendix A]{PT} and note that $\mathring\cP(\omega,\ve,\kappa_0)$ is an open ball in the Banach space $\cB(\omega,\kappa_0)$.  Gap length cannot be a $C^1$ function on $\mathring\cP(\omega,\ve,\kappa_0)$; it is not differentiable even in the periodic case. However, the following related quantities are:

\begin{theorem}\label{thmGaps}
Assume that $\omega$ obeys the Diophantine condition
\eqref{eq:1PAI7-5-85a}, \eqref{eq:PAIombasicTcondition5a}. There exists $\ve_1 = \ve_1(a_0,b_0,\kappa_0) > 0$ such that, for $m\in \zv$ with $m\omega > 0$, the functions
\begin{equation}\label{eqnGapMidpoint}
\gamma_m^2 := (E_m^+ - E_m^-)^2, \qquad \tau_m := \frac{E_m^- + E_m^+}2
\end{equation}
are $C^1$ functions on $\mathring\cP(\omega,\ve_1,\kappa_0)$.
\end{theorem}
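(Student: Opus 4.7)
The plan is to use periodic approximation combined with Vieta's formula to handle the gap-closure singularity. The central observation is that, while individual gap edges $E_m^\pm$ are only Lipschitz in $V$ (they can merge as gaps close), the elementary symmetric functions $\tau_m$ and $\gamma_m^2$ retain smoothness in the periodic case by the classical P\"oschel--Trubowitz theory, and this smoothness transfers to the quasi-periodic case via periodic approximation and the uniform multiscale estimates from \cite{DG1, DGL1, BDGL1}.

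First I would establish the $C^1$ property in the periodic case. For a potential of period $T$, the Hill discriminant $D(E, V) = \tr M(E, T; V)$ is entire in $E$ and real-analytic in $V$. The pair $E_m^\pm$ consists of the only two roots (counted with multiplicity) of $F(E, V) := D(E, V)^2 - 4$ in a small complex neighborhood of $[E_m^-, E_m^+]$. Weierstrass preparation applied to $F$ near this closed interval yields a factorization
\[
F(E, V) = u(E, V)\bigl(E^2 - 2\tau_m(V)\, E + \tau_m(V)^2 - \gamma_m^2(V)/4\bigr),
\]
where $u$ is a local analytic non-vanishing factor, identifying $\tau_m$ and $\gamma_m^2$ as real-analytic coefficients; equivalently, $p_k := (E_m^-)^k + (E_m^+)^k = \frac{1}{2\pi i}\oint_{\Gamma_m} E^k\,\partial_E F/F\,dE$ for any loop $\Gamma_m$ enclosing the pair, giving $\tau_m = p_1/2$ and $\gamma_m^2 = 2 p_2 - p_1^2$.

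For the quasi-periodic case, I would approximate $V \in \mathring\cP(\omega, \ve_1, \kappa_0)$ by periodic potentials $V^{(N)}$ via rational approximations $\omega^{(N)}$ of $\omega$, as in \cite{BDGL1}. For each $N$, the preceding argument provides $C^1$ functions $\tau_m^{(N)}, (\gamma_m^{(N)})^2$. The multiscale analysis from \cite{DG1, DGL1}, applied uniformly in $N$, yields uniform exponential separation of the $m$-th gap from its neighbors (analogous to the estimate $h_m \le \ve^{1/2}\exp(-\kappa_0|m|/5)$ of Theorem~\ref{thmCombDirect}) and uniform exponential decay of the Weyl $m$-function off the spectrum. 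These uniform bounds allow the contour $\Gamma_m$ to be chosen once, uniformly in $N$ and in $V$ near a base point.

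The hard part will be upgrading the uniform convergence of $\tau_m^{(N)}, (\gamma_m^{(N)})^2$ to $C^1$-convergence of the Fr\'echet derivatives as $N \to \infty$. These derivatives admit contour-integral representations involving $\delta V$ paired with the Green's function of $H_{V^{(N)}} - E$, obtained by differentiating the Johnson--Moser formula $w_V(E) = \bbE\, m(\cdot\,;E, V)$ with respect to $V$. The uniform exponential bounds on gap heights and on the Green's function, both provided by the multiscale analysis, give uniform-in-$N$ estimates on these derivatives, hence $C^1$ convergence. Therefore $\tau_m$ and $\gamma_m^2$ are $C^1$ on $\mathring\cP(\omega, \ve_1, \kappa_0)$.
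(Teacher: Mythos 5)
Your proposal takes the right high-level route --- periodic approximation combined with uniform multiscale estimates --- and your Vieta/Weierstrass observation for the periodic case is a clean way to see why $\tau_m$ and $\gamma_m^2$ (unlike the individual edges $E_m^\pm$) stay smooth through gap closure. But there is a genuine gap in the passage to the quasiperiodic limit, which is precisely where the real work of this theorem lies.

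The contour-integral representation $p_k = \frac{1}{2\pi i}\oint_{\Gamma_m} E^k \,\partial_E F/F\,dE$ with $F = D^2 - 4$ is tied to the discriminant $D$, which has no quasiperiodic analogue. Rewriting $\partial_E F/F$ through the Marchenko--Ostrovskii map via $D = 2\cos(iTw)$ introduces a factor $\cot(iTw)$ that oscillates unboundedly as the period $T^{(N)}\to\infty$, so the integrand does not converge; and the integral $\oint_{\Gamma_m} E^k w'(E)\,dE$ itself vanishes (the gap is a subset of the resolvent set, across which $w'$ has no jump), so $w'$ alone cannot produce $p_k$. You would need a representation of the derivatives $\partial_V\tau_m$, $\partial_V\gamma_m^2$ in objects that genuinely converge as $N\to\infty$, and then you still need to show those derivatives are continuous in $V$ at the gap-closure locus $D_m = \{\gamma_m = 0\}$; your claim that ``uniform-in-$N$ estimates on these derivatives'' give ``$C^1$-convergence'' is not a valid inference --- uniform boundedness does not imply the sequence of derivative functionals converges, let alone that the limit is continuous at $D_m$. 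The paper handles all of this by a different route: it represents the derivative via squared eigensolutions at the gap edges (Lemma~\ref{lemmaQPeigensolution}), for which the infinite-product formula \eqref{gapedgesolutionnormalized} passes to the quasiperiodic limit, and then proves the crucial continuity at $D_m$ by a quantitative asymptotic expansion $b_m = O(\gamma_m)$ of the critical point around the midpoint (Lemma~\ref{lemmabmC}) together with the uniform product estimates of Lemma~\ref{lemmaproductlinear}. That asymptotic analysis is the heart of the argument; your proposal does not address it.
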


The proof will also provide explicit expressions for their derivatives, in terms of quasiperiodic eigensolutions of the Schr\"odinger operators at the gap edges, which mirror known formulas in the periodic setting. The periodic problem is closely related to the Dirichlet problem on an interval, and gap edges can be interpreted as isolated eigenvalues of operators with periodic/antiperiodic boundary conditions, so their differentiability as a function of the potential can be obtained using perturbation theory. In the quasiperiodic setting, there is no such interpretation; thus, the proof of Theorem~\ref{thmGaps} uses the results of the periodic theory, rather than its techniques.

We will also study a set of actions on the open set of quasiperiodic potentials $\mathring\cP(\omega,\ve,\kappa_0)$, which we define as follows: for $V\in \mathcal{P}(\omega,\ve,\kappa_0)$ and $m\in \zv$ with $m\omega >0$, let
\begin{equation}\label{eqnAction}
I_m := - \frac {2}{\pi}  \int_{E_m^-}^{E_m^+} \Re w(\lambda) \,d\lambda.
\end{equation}
If $E_m^- = E_m^+$, the convention $I_m = 0$ is natural.  Actions were originally introduced by Flaschka--McLaughlin \cite{FMcL} in the periodic case by an expression in terms of the discriminant, which can be rewritten in the form \eqref{eqnAction}. In the periodic setting, integrability of the KdV equation motivated the construction of action-angle coordinates for periodic initial data $V \in L^2(\bbT)$, whose Cartesian counterpart are a global, real analytic system of Birkhoff coordinates \cite{KappelerMityagin99,KP}. In these coordinates, the KdV equation is represented as an integrable Hamiltonian system on $L^2(\bbT)$, with the Hamiltonian a function of the actions alone. Joint analyticity of this set of coordinates has played an important role in further studies of the periodic KdV equation and has been the basis for applications of KAM techniques to perturbations of the periodic KdV equation \cite{KP}.

From a spectral theorist's perspective, in the periodic case, the actions describe the isospectral torus and they stay constant along the KdV time evolution, whereas the angles are a set of coordinates on the isospectral torus and they evolve linearly with time. In the reflectionless nonperiodic setting, inverse spectral theory is based on character-automorphic functions on the resolvent set of the Schr\"odinger operator. All results in this setting describe single solutions, and the set of angles is reinterpreted as an element of the character group \cite{EVY, SY2, SY1}. It is a natural question to what extent the actions \eqref{eqnAction}, combined with these angles/characters, can play the analogous role in the quasiperiodic setting, and how much of a Hamiltonian system structure can be found in this setting. Differentiability of the actions is a  step in this direction:

\begin{theorem}\label{thmActions}
Assume that $\omega$ obeys the Diophantine condition
\eqref{eq:1PAI7-5-85a}, \eqref{eq:PAIombasicTcondition5a}. There exists $\ve_1 = \ve_1(a_0,b_0,\kappa_0) > 0$ such that, for $m\in \zv$ with $m\omega > 0$, the action $I_m$ is  well-defined for $V \in \cP(\omega,\ve_1,\kappa_0)$ and has the following properties:
\begin{enumerate}[(a)]
\item $I_m \ge 0$, with $I_m = 0$ if and only if the $m$-th gap is closed
\item Actions obey the sum identity
\begin{equation}\label{eqnTraceFormula}
\sum_{\substack{m\in \zv \\  m\omega > 0}} 2 \pi m\omega I_m(V) = \frac 12 \bbE (V^2).
\end{equation}
\item  $I_m$ is a $C^1$ function on $\mathring\cP(\omega,\ve_1,\kappa_0)$, and its Fr\'echet derivative is the functional
\begin{equation}\label{imderivative}
(\partial_V I_m )(q) =  \frac 2{\pi}  \bbE \left( q(x)  \int_{E_m^-}^{E_m^+} G(x,x;\lambda,V) d\lambda \right),
\end{equation}
where $G(x,x;\lambda,V)$ denotes the diagonal Green's function of the Schr\"odinger operator $H_V$.
\item The function $I_m / \gamma_m^2$, initially defined on the set $\{ V \in \mathring\cP(\omega,\ve_1,\kappa_0)  \mid \gamma_m(V) > 0 \}$, extends to a strictly positive, $C^1$ function on $\mathring\cP(\omega,\ve_1,\kappa_0)$.
\end{enumerate}
\end{theorem}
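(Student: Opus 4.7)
The plan is to handle the four parts separately, leveraging general properties of the Marchenko--Ostrovskii map $w$ and the uniform bounds from Theorem~\ref{thmCombDirect}. Part (a) is immediate from $-\Re w(\lambda) = \gamma(\lambda) \ge 0$ (the Lyapunov exponent), with $\gamma > 0$ strictly on open gaps (a standard consequence of $[E_m^-, E_m^+]$ being a compact subset of the resolvent set there). For part (b), I would expand $w$ at $z \to -\infty$ using the Riccati equation $m' + m^2 = V - z$ iteratively; the odd-index terms in the expansion vanish in the $\bbE$-mean because they are $x$-derivatives of quasiperiodic functions, yielding (under the standard normalization $\bbE(V) = 0$)
\[
w(z) = -\sqrt{-z} + \frac{\bbE(V^2)}{8(-z)^{3/2}} + O((-z)^{-5/2}).
\]
Thus the coefficient of $1/z$ in $w^2 + z$ equals $\bbE(V^2)/4$. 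Deforming $\oint_{|z|=R}(w^2 + z)\,dz/(2\pi i)$ onto the cut $\cS$, only gaps contribute, and on the $m$-th gap the boundary values $w(\lambda \pm i0) = -\gamma(\lambda) \pm i\pi m\omega$ (the imaginary part fixed by gap labeling) produce a jump $-4i\pi m\omega\gamma(\lambda)$ in $w^2$. Matching the two expressions and substituting $I_m = (2/\pi)\int\gamma\,d\lambda$ delivers the sum identity~\eqref{eqnTraceFormula}.

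For part (c), well-definedness of $I_m$ is immediate from uniform upper bounds on $\gamma$ across the $m$-th gap (Theorem~\ref{thmCombDirect}). To compute the Fr\'echet derivative, differentiate $I_m = -(2/\pi)\int_{E_m^-}^{E_m^+}\Re w\,d\lambda$ under the integral. The boundary contributions $\Re w(E_m^\pm)\partial_V E_m^\pm$ vanish unconditionally because $\Re w(E_m^\pm) = 0$ (the gap edges sit on $\cS$), which circumvents any need to discuss Fr\'echet differentiability of $E_m^\pm$ themselves. For the integrand, linearizing the Riccati equation gives $(\partial_V m)' + 2m\,\partial_V m = q$, solved by $\partial_V m(x;z)[q] = -\psi_+(x)^{-2}\int_x^\infty \psi_+(y)^2 q(y)\,dy$ for the Weyl solution $\psi_+$. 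Combining with $G(x,x;z,V) = 1/(m_-(x;z) - m_+(x;z))$ and ergodic means yields $\bbE(\partial_V m)[q] = -\bbE(G(x,x;z,V)\,q(x))$; on a real gap, $G$ is real, so $\partial_V\Re w = -\bbE(Gq)$, and Fubini gives~\eqref{imderivative}. Continuity of this Fr\'echet derivative in $V$ (which gives $C^1$) follows from joint continuity of $G(x,x;\lambda,V)$ for $\lambda$ in the compact gap closure, by standard resolvent estimates.

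For part (d), the central observation is that $w(z) - i\pi m\omega$ takes equal real values $-\gamma(\lambda)$ from above and below the $m$-th gap, hence extends analytically across the gap interior and retains only square-root branch points at $E_m^\pm$. Choosing the branch of $\sqrt{(z-E_m^-)(z-E_m^+)}$ whose cuts match those of $w$ outside the gap, we factor
\[
w(z) - i\pi m\omega = i\,\sqrt{(z - E_m^-)(z - E_m^+)}\,H_m(z;V),
\]
with $H_m(\lambda;V)$ real-valued (by Schwarz reflection on the real segment $(E_m^-, E_m^+)$) and analytic on a fixed complex neighborhood $U_m$ of $[E_m^-,E_m^+]$. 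Positivity $H_m(\tau_m;V) > 0$ holds in the open-gap case by $\gamma > 0$, and extends to the closed-gap limit as $H_m(\tau_m;V) \to \alpha'(\tau_m;V) = \pi \rho(\tau_m;V) > 0$ (the density of states). Changing variables $\lambda = \tau_m + (\gamma_m/2)t$, $t \in [-1,1]$, so that $\gamma(\lambda) = H_m(\lambda)\,(\gamma_m/2)\sqrt{1-t^2}$, gives
\[
\frac{I_m}{\gamma_m^2} = \frac{1}{2\pi}\int_{-1}^{1}\sqrt{1-t^2}\,H_m\bigl(\tau_m + (\gamma_m/2)t;\,V\bigr)\,dt,
\]
which extends continuously to $\gamma_m = 0$ with value $H_m(\tau_m;V)/4 > 0$. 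Combined with Theorem~\ref{thmGaps} ($C^1$-smoothness of $\tau_m$ and $\gamma_m^2$) and $C^1$ dependence of $H_m$ on $V$, this yields the $C^1$ extension on $\mathring\cP(\omega,\ve_1,\kappa_0)$.

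The main obstacle is establishing the factorization of (d) uniformly in $V$ as the gap opens or closes, since the analytic structure of $w$ changes (square-root branch points at $E_m^\pm$ appear or disappear) at $\gamma_m = 0$. The resolution is to work in a $V$-uniform complex neighborhood $U_m$ of $[E_m^-,E_m^+]$ separated from the other slits of the Marchenko--Ostrovskii comb; the exponential decay of the slit heights $h_k$ for $k\ne m$ provided by Theorem~\ref{thmCombDirect} guarantees such a neighborhood of uniform size, and by standard conformal-map estimates $H_m$ has uniform upper and lower bounds and $C^1$ dependence on $V$ throughout.
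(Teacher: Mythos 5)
Your plan differs structurally from the paper's in an important way: the paper's proofs of parts (b), (c), and (d) all go through periodic approximation, while you propose direct arguments on the quasiperiodic operator itself (contour integration, Riccati linearization, Schwarz reflection of $w$). These are natural alternatives, and some work; but at several junctions the periodic approximation is doing real work that your plan does not replace.

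For (b), your contour deformation is the classical derivation, but it is not free of charge here. The asymptotic expansion $w^2 + z = \bbE(V^2)/(4z) + O(z^{-2})$ needs to hold uniformly as $z \to \infty$ in a sector (not just along the negative real axis), and the deformation onto the spectrum requires boundary control of $w$ on the infinitely many gaps accumulating at every point of $\cS$. These are obtainable, but the paper simply invokes \cite[Thm E.1]{KP} for the periodic trace formula and passes to the limit using the uniform bound $I_m \le 2\gamma_m h_m$, which is shorter and less delicate. You should also recheck the value of $\Im w$ on the $m$-th gap against the paper's comb normalization ($\bbC_+ \setminus \bigcup (m\omega, m\omega + ih_m]$ gives $\Im w = m\omega$ on the $m$-th slit, not $\pi m\omega$), so that your constant matches \eqref{eqnTraceFormula}.

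For (c), the Riccati linearization $\bbE(\partial_V m) = -\bbE(Gq)$ is formally correct, but two points are glossed over. First, invoking Leibniz with moving endpoints $E_m^\pm$ requires at least some regularity of $E_m^\pm$; noting that $\Re w(E_m^\pm) = 0$ kills the boundary terms only after one has justified that the Leibniz formula applies, which in turn uses Lipschitz continuity of the endpoints and an approximation argument. Second, and more seriously, continuity of the candidate functional in $V$ is not ``standard resolvent estimates'': $G(x,x;\lambda)$ blows up like an inverse square root at the gap edges, and as the $m$-th gap closes the integration interval degenerates. The paper's Lemma 7.2 handles both via the Craig product formula, obtaining the quantitative bound $\lvert\Lambda_m(q;V)\rvert \le A(m)\gamma_m\pi\lVert q\rVert_\infty$ and a dominated-convergence argument along the renormalized gap; without something of that kind the $C^1$ conclusion is not established.

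For (d), the factorization idea is sound and is essentially what the paper's $\chi_m$ and $\Phi_m$ encode, but it needs two repairs. The reflection claim should be stated for the analytic continuation of $w\!\restriction_{\bbC_+}$ across $(E_m^-,E_m^+)$; the paper's globally defined $w$ on $\bbC\setminus[E_0,\infty)$ does not itself extend, since its imaginary part jumps by $2\Im w$ across the gap. More importantly, the claim that $H_m$ is a $C^1$ function of $V$ ``by standard conformal-map estimates'' is where the paper does its hardest work (Lemmas~\ref{lemmaproductquadratic} and \ref{lemmabmC1}, then the careful expansion in Proposition~\ref{rhomC1}). Your final formula $I_m/\gamma_m^2 = \tfrac{1}{2\pi}\int_{-1}^1 \sqrt{1-t^2}\, H_m(\tau_m + \tfrac{\gamma_m}{2}t;V)\,dt$ still involves $\gamma_m$ (not $\gamma_m^2$) through the argument of $H_m$; differentiating it in $V$ produces a term with $\partial_V\gamma_m$, which is not a bounded functional at a closing gap. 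The result is nevertheless $C^1$ because the coefficient of $\partial_V\gamma_m$ is $O(\gamma_m)$ (so the product is controlled by $\partial_V\gamma_m^2$), but this requires the explicit cancellation $\int_{-1}^1 t\sqrt{1-t^2}\,dt = 0$ combined with a second-order expansion of $H_m$ in $E$, exactly the second-order bookkeeping the paper carries out. This step cannot be absorbed into ``$C^1$ dependence of $H_m$'' alone; you need to make the cancellation explicit.
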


Part (d) tells us that the action $I_m$ scales quadratically with gap size as the $m$-th gap closes. It is a  strengthening of (c); in the periodic setting, this strengthening is a necessary step in describing the Birkoff coordinates (compare \cite[Theorem 7.3]{KP}).

The proofs cannot follow the strategy from the periodic theory. The periodic theory relies heavily on the behavior along a single period and on objects, such as the monodromy matrix, which do not exist in non-periodic settings. The monodromy matrix can be defined for complex periodic potentials, so some variables are naturally found to be complex analytic \cite{KP}; this has no counterpart in our setting. Instead, in this  paper, as in \cite{DGL2}, we use periodic approximation. Let $\omega^{(r)} \to \omega$ be a canonical sequence of rational approximants for $\omega$; this sequence will be fixed in what follows. Since $\omega$ has rationally independent components, each $V \in \mathcal{P}(\omega,\ve,\kappa_0)$ corresponds to a unique continuous function $U:\mathbb{T}^\nu \to \mathbb{R}$ such that $V(x) = U(\omega x)$. This, in turn, allows us to unambiguously define
\begin{equation}\label{eqnPeriodicApproximantVr}
V^{(r)}(x) = U(\omega^{(r)} x) = \sum_{n \in \zv} c(n) e^{2 \pi i n\omega^{(r)} x}.
\end{equation}
We therefore have a map $\mathcal{P}(\omega,\ve,\kappa_0) \to \mathcal{P}(\omega^{(r)},\ve,\kappa_0)$ given by $V \mapsto V^{(r)}$.

Our method also requires approximation of analytic sampling functions by those with subexponentially decaying Fourier coefficients, replacing the condition
\begin{equation}\label{eq:samplingfunc2}
\lvert c(n)\rvert  \le  \ve\exp(-\kappa_0 \lvert n\rvert ), \quad \forall n \in \zv
\end{equation}
by
\begin{equation}\label{eq:samplingfunc2alpha}
\lvert c(n)\rvert  \le  \ve\exp(-\kappa_0 \lvert n\rvert^{\alpha_0} ), \quad \forall n \in \zv
\end{equation}
for some $\alpha_0 \in (0,1]$. The corresponding set of quasiperiodic potentials is denoted by $\mathcal{P}(\omega,\ve,\kappa_0,\alpha_0)$. Of course, $\mathcal{P}(\omega,\ve,\kappa_0,1) = \mathcal{P}(\omega,\ve,\kappa_0)$.

In Section~\ref{sectionMOcontinuity}, we consider a very general continuity property of the Marchenko--Ostrovskii maps away from the spectrum. In Section~\ref{sectionMOperiodic} we study two-sided estimates on the comb domains associated to periodic approximants $V^{(r)} \in \mathcal{P}(\omega^{(r)},\ve,\kappa_0)$. In Section~\ref{sectionGeometric} we combine this with geometric function theory to describe the comb domains of $V \in \cP(\omega,\ve,\kappa_0)$, proving Theorem~\ref{thmCombDirect}. In Section~\ref{sectionTranslation} we modify a technique from \cite{DGL2} to prove Theorem~\ref{thmCombInverse} by periodic approximation.
In Section~\ref{sectionGaps} we prove Theorem~\ref{thmGaps} and in Section~\ref{sectionActions} we prove Theorem~\ref{thmActions}. 

\section{Continuity of Marchenko--Ostrovskii Maps on $\bbC_+$}\label{sectionMOcontinuity}

In this section, we prove very general continuity properties of Marchenko--Ostrovskii maps away from the spectrum.

For any bounded potential $V$, there is a unique (up to normalization) nontrivial solution $\psi(x,z)$ of $(-\partial_x^2+V) \psi(x;z) = z \psi(x;z)$ which is square-integrable at $+\infty$. It is called the Weyl solution; in terms of it, the half-line Weyl functions are expressed as
\[
m(x,z) = \frac{\partial_x \psi(x;z)}{ \psi(x;z)}, \qquad z \in \bbC \setminus \bbR.
\]
We will use the following continuity property of the Weyl function:

\begin{lemma}[\cite{JMerratum}] \label{lemma:mfunctionconv}
If a sequence of potentials $V_n$ converges to $V$ uniformly on compacts, and $m_n$ are $m$-functions corresponding to $V_n$, then
\[
\lim_{n\to \infty} m_n(x;z) = m(x;z)
\]
uniformly on compacts in $z \in \mathbb{C} \setminus \mathbb{R}$.
\end{lemma}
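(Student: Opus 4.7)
The plan is to reduce the claim to continuous dependence of ODE solutions on their coefficients via the Weyl disk formalism. For each potential $V$, base point $x \in \bbR$, spectral parameter $z \in \bbC \setminus \bbR$, and $X > 0$, let $D_X(x;z,V) \subset \bbC$ denote the Weyl disk at $x$, i.e.\ the set of values $\psi'(x)/\psi(x)$ as $\psi$ ranges over nonzero solutions of $(-\partial_x^2 + V)\psi = z \psi$ satisfying a real boundary condition at $x+X$. By Weyl--Titchmarsh theory these disks are nested in $X$, the Weyl $m$-function $m(x;z,V)$ belongs to each of them, and the radius has the explicit form
\[
\operatorname{rad} D_X(x;z,V) = \frac{1}{2|\Im z| \int_x^{x+X} |\psi|^2\,dt},
\]
for $\psi$ a solution on the boundary circle normalized so that $\psi(x) = 1$.

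First, I would establish a uniform radius bound: on any compact set $K \subset \bbR \times (\bbC \setminus \bbR)$ of $(x,z)$-values, there exists $\rho(X)$ with $\rho(X) \to 0$ as $X \to \infty$ such that $\operatorname{rad} D_X(x;z,V_n) \le \rho(X)$ uniformly in $n$ and $(x,z) \in K$. This step uses a uniform $L^\infty$ bound $\sup_n \|V_n\|_\infty < \infty$, which in the applications of this paper is automatic because the potentials belong to $\cP(\omega,\ve,\kappa_0)$. Combined with standard Gronwall-type a priori estimates, such a bound gives a uniform lower bound on $\int_x^{x+X} |\psi|^2$ of the order required, and hence the decay of $\rho(X)$.

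Second, for a fixed $X$ and a fixed real boundary condition at $x+X$, the solution of the ODE depends continuously on the coefficient $V$ restricted to $[x, x+X]$ (another Gronwall estimate), so the chosen boundary point of $\partial D_X(x;z,V_n)$ converges to the corresponding boundary point of $\partial D_X(x;z,V)$, uniformly for $(x,z) \in K$. Assembling the two ingredients: given $\epsilon > 0$, pick $X$ with $\rho(X) < \epsilon/3$, then $N$ so that the chosen boundary points agree up to $\epsilon/3$ for $n \ge N$. Since $m_n(x;z)$ and $m(x;z)$ lie respectively in $D_X(x;z,V_n)$ and $D_X(x;z,V)$, the triangle inequality gives $|m_n(x;z) - m(x;z)| < \epsilon$ for $(x,z) \in K$ and $n \ge N$.

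The main obstacle is the first step, the uniform control of the Weyl disk radii. For the quasiperiodic context of this paper this is built in via the common $L^\infty$ bound on the class $\cP(\omega,\ve,\kappa_0)$; in broader generality one needs both a uniform $L^\infty$ bound and the limit-point property at $+\infty$ (automatic for bounded half-line potentials) to ensure that the $m$-function is unambiguously defined and lies in every Weyl disk. With that in hand, the remaining ingredients are entirely standard.
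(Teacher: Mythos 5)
The paper does not prove this lemma; it cites the Johnson--Moser erratum for it, so there is no in-paper proof to compare against. Your overall strategy via nested Weyl disks is a correct and natural one, and your steps two and three (continuous dependence of the finite-window disk $D_X$ on $V|_{[x,x+X]}$, and the triangle-inequality assembly) are sound. The weak link is step one. You claim that a Gronwall estimate, combined with a uniform $L^\infty$ bound, yields a lower bound on $\int_x^{x+X}|\psi|^2$ that grows without bound in $X$, uniformly in $n$. But what Gronwall controls is the quadratic form $u=|\psi|^2+|\psi'|^2$, giving $e^{-C(t-x)}u(x)\le u(t)\le e^{C(t-x)}u(x)$. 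The lower bound decays, so it only gives $\int_x^{x+X}\bigl(|\psi|^2+|\psi'|^2\bigr)\,dt\ge (1-e^{-CX})/C$, which stays bounded; the upper bound grows but controls $\int|\psi|^2$ from above, which is the wrong direction for the radius; and in any case neither separates $\int|\psi|^2$ from $\int|\psi'|^2$. The uniform decay $\rho(X)\to 0$ is true, but it is essentially a quantitative, uniform version of the limit-point property and does not follow from a straightforward Gronwall estimate (the honest route through a uniform $L^\infty$ bound is via a Combes--Thomas type bound or a uniform lower bound on the Lyapunov exponent off $\bbR$, neither of which is ``standard Gronwall'').

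There are two clean ways to close this gap, and both dispense with the global $L^\infty$ hypothesis you add. One is a compactness argument: if the decay were not uniform, there would be $\epsilon>0$, $X_k\to\infty$, indices $n_k$, and $(x_k,z_k)\in K$ with $\operatorname{rad} D_{X_k}(x_k;z_k,V_{n_k})>\epsilon$. Passing to a subsequence with $(x_k,z_k)\to(x^*,z^*)$, the nesting of disks gives $\operatorname{rad} D_{X_0}(x_k;z_k,V_{n_k})>\epsilon$ for every fixed $X_0$, and your step-two continuous dependence on the fixed compact window $[x,x+X_0]$ then yields $\operatorname{rad} D_{X_0}(x^*;z^*,W)\ge\epsilon$ for all $X_0$, where $W=V$ if $n_k\to\infty$ or $W=V_{n_0}$ along a constant subsequence; this contradicts the limit-point property of $W$. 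The second route avoids the uniform radius decay entirely: for each fixed $(x,z)$ the sequence $m_n(x;z)$ is bounded (it lies in $D_1(x;z,V_n)\to D_1(x;z,V)$) and every subsequential limit lies in $\overline{D_X(x;z,V)}$ for every $X$, hence equals $m(x;z)$, proving pointwise convergence; local uniformity in $z$ then follows since the $m_n(x;\cdot)$ are uniformly bounded on compact subsets of $\bbC\setminus\bbR$ (again by the $X=1$ disk bound), so they form a normal family, and pointwise convergence of a normal family is locally uniform.
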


In particular, this continuity property implies almost periodicity of $m(x, z)$ as a function of $x$ for an almost periodic potential $V$, which justifies the definition \eqref{wfunc}. We will now specialize to the quasiperiodic setting
\[
V_{U,\omega, \theta}(x) = U(\omega x+ \theta),
\]
where $U \in C(\mathbb{T}^\nu,\mathbb{R})$, $\omega \in \mathbb{R}^\nu$, $\theta \in \mathbb{T}^\nu$. We will denote the corresponding functions by $m_{U,\omega,\theta}(x;z)$ and $w_{U,\omega,\theta}(z)$. Since translation can also be written as a change of the sample point $\theta$, we have the trivial relations
\begin{align*}
m_{U,\omega,\theta}(x;z) & = m_{U,\omega,\omega x+\theta}(0;z),\\
w_{U,\omega,\theta}(z) & = w_{U,\omega,\omega x + \theta}(z).
\end{align*}

\begin{lemma}\label{lemmaWeylJointlyContinuous}
The function $m_{U,\omega, \theta}(x;z)$ is jointly continuous in $(U,\omega,\theta) \in C(\mathbb{T}^\nu,\mathbb{R}) \times \bbR^\nu \times \bbT^\nu$.
\end{lemma}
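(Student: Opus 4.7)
The plan is to reduce the statement to Lemma~\ref{lemma:mfunctionconv} by showing that the map $(U,\omega,\theta) \mapsto V_{U,\omega,\theta}$ is continuous from the given product space into $C(\mathbb{R},\mathbb{R})$ equipped with the topology of uniform convergence on compact subsets. Once this is established, the conclusion is immediate from the cited lemma, since the latter converts uniform-on-compacts convergence of potentials into convergence of Weyl $m$-functions uniformly on compacts in $z \in \mathbb{C} \setminus \mathbb{R}$.

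First, I would verify uniform-on-compacts continuity of the sampling. Suppose $(U_n, \omega_n, \theta_n) \to (U, \omega, \theta)$ in $C(\mathbb{T}^\nu,\mathbb{R}) \times \mathbb{R}^\nu \times \mathbb{T}^\nu$, and fix a compact set $K \subset \mathbb{R}$. For $y \in K$ I would use the splitting
\[
|V_{U_n,\omega_n,\theta_n}(y) - V_{U,\omega,\theta}(y)| \le \|U_n - U\|_\infty + |U(\omega_n y + \theta_n) - U(\omega y + \theta)|.
\]
The first term tends to zero by hypothesis. The second vanishes uniformly in $y \in K$ because $U$ is uniformly continuous on the compact torus $\mathbb{T}^\nu$, while $\omega_n y + \theta_n \to \omega y + \theta$ uniformly for $y$ in any bounded set (the error is bounded by $|\omega_n - \omega|\,\sup_K|y| + |\theta_n - \theta|$, interpreted modulo $\mathbb{Z}^\nu$).

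Second, I would feed this convergence into Lemma~\ref{lemma:mfunctionconv} to conclude $m_{U_n,\omega_n,\theta_n}(x;z) \to m_{U,\omega,\theta}(x;z)$ for every fixed $x$ and uniformly on compacts in $z \in \mathbb{C} \setminus \mathbb{R}$. Since $C(\mathbb{T}^\nu,\mathbb{R}) \times \mathbb{R}^\nu \times \mathbb{T}^\nu$ is metrizable, sequential continuity established above is equivalent to continuity, which gives the desired joint continuity in $(U,\omega,\theta)$ (and, by the uniformity, joint continuity in $(U,\omega,\theta,z)$ on $C(\mathbb{T}^\nu,\mathbb{R}) \times \mathbb{R}^\nu \times \mathbb{T}^\nu \times (\mathbb{C}\setminus\mathbb{R})$).

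There is no serious obstacle; the lemma is essentially a packaging statement combining Lemma~\ref{lemma:mfunctionconv} with the elementary continuity of the sampling map, where compactness of $\mathbb{T}^\nu$ upgrades pointwise to uniform-on-compacts convergence of $V$. If continuity in the argument $x$ is also required, one can invoke the translation identity $m_{U,\omega,\theta}(x;z) = m_{U,\omega,\omega x + \theta}(0;z)$ recorded in the excerpt, which reduces the general case to $x = 0$ via continuity of $(x,\omega,\theta) \mapsto \omega x + \theta$ as a map into $\mathbb{T}^\nu$.
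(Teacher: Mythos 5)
Your argument is correct and is essentially the paper's proof, just with the uniform-on-compacts convergence of $U_n(\omega_n x + \theta_n) \to U(\omega x + \theta)$ spelled out in detail (triangle inequality plus uniform continuity of $U$ on the compact torus) before invoking Lemma~\ref{lemma:mfunctionconv}. The paper states this convergence as obvious and passes directly to the conclusion.
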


\begin{proof}
If $(U_n,\omega_n,\theta_n) \to (U,\omega,\theta)$, then the potentials $U_n(\omega_n x + \theta_n)$ converge uniformly on compacts to $U(\omega x + \theta)$, so by Lemma~\ref{lemma:mfunctionconv}, $m_{U_n,\omega_n,\theta_n}(z) \to m_{U,\omega,\theta}(z)$.
\end{proof}

In particular, for fixed $x\in \mathbb{R}$ and $z\in \mathbb{C} \setminus\mathbb{R}$, $m_{U,\omega, \theta}(x;z)$ is a continuous function of $(U,\omega,\theta)$, so it is uniformly continuous on compacts. This implies a convergence property of Marchenko--Ostrovskii maps:

\begin{prop}\label{prop:wconvoffboundary}
Let $(U_n,\omega_n,\theta_n) \to (U,\omega,\theta)$. If the components of $\omega$ are linearly independent over $\mathbb{Q}$, then for all $z \in \mathbb{C} \setminus \mathbb{R}$,
\[
\lim_{n\to\infty} w_{U_n,\omega_n,\theta_n}(z) = w_{U,\omega,\theta}(z),
\]
and convergence is uniform in compacts in $\mathbb{C} \setminus \mathbb{R}$.
\end{prop}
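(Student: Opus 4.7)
The plan is to express $w$ as an integral of $m$-functions over $\bbT^\nu$ via Weyl equidistribution, and then to handle the possible lack of rational independence of the approximating $\omega_n$ by uniform approximation with trigonometric polynomials.

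First I would use the change-of-variables identity $m_{U,\omega,\theta}(x;z) = m_{U,\omega,\omega x + \theta}(0;z)$ recalled just above Lemma~\ref{lemmaWeylJointlyContinuous} to rewrite
\[
w_{U_n, \omega_n, \theta_n}(z) \;=\; \bbE\bigl( F_n(\omega_n \cdot + \theta_n) \bigr),
\]
where $F_n(\phi) := m_{U_n,\omega_n,\phi}(0;z)$ is continuous on $\bbT^\nu$. By Lemma~\ref{lemmaWeylJointlyContinuous} and the compactness of $\bbT^\nu$, $F_n \to F$ uniformly on $\bbT^\nu$ with $F(\phi) = m_{U,\omega,\phi}(0;z)$. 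On the limiting side, since the components of $\omega$ are rationally independent, Weyl's equidistribution theorem gives
\[
w_{U,\omega,\theta}(z) \;=\; \int_{\bbT^\nu} F(\phi)\,d\phi,
\]
independent of $\theta$.

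The main obstacle is that $\omega_n$ need not have rationally independent components, so $w_{U_n,\omega_n,\theta_n}(z)$ is a priori a mean over an orbit closure that may be a lower-dimensional subtorus of $\bbT^\nu$. To overcome this I would approximate $F$ uniformly by trigonometric polynomials: given $\delta > 0$, pick $P(\phi) = \sum_{|k|\le N} \hat P(k)\, e^{2\pi i k\phi}$ with $\|F - P\|_\infty < \delta$. For $n$ large, $\|F_n - P\|_\infty < 2\delta$, so $w_{U_n,\omega_n,\theta_n}(z)$ is within $2\delta$ of
\[
\bbE\bigl( P(\omega_n \cdot + \theta_n)\bigr) \;=\; \sum_{\substack{|k|\le N\\ k\omega_n = 0}} \hat P(k)\, e^{2\pi i k \theta_n}.
\]
By rational independence of $\omega$, the quantity $c_N := \min_{0 < |k| \le N} |k\omega|$ is strictly positive, so once $|\omega_n - \omega| < c_N/(2N)$ we have $|k\omega_n| \ge c_N/2 > 0$ for every $0 < |k| \le N$; this mean therefore collapses to $\hat P(0) = \int_{\bbT^\nu} P\,d\phi$, which is in turn within $\delta$ of $\int_{\bbT^\nu} F\,d\phi = w_{U,\omega,\theta}(z)$. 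Sending $\delta \downarrow 0$ yields pointwise convergence.

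Finally, to upgrade pointwise convergence to uniform convergence on compacts in $z$, I would invoke Vitali--Montel: each $w_{U_n,\omega_n,\theta_n}$ is holomorphic on $\bbC \setminus \bbR$ and Herglotz on $\bbC_+$, and since $\|U_n\|_\infty$ stays bounded, standard bounds on the Weyl $m$-function provide local uniform bounds on this sequence on $\bbC \setminus \bbR$. Pointwise convergence therefore upgrades automatically to uniform convergence on compact subsets of $\bbC \setminus \bbR$.
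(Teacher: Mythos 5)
Your argument is correct, but it reaches the result along a genuinely different path than the paper. The two proofs start from the same observation, writing $w_{U_n,\omega_n,\theta_n}(z) = \bbE\bigl(F_n(\omega_n\cdot+\theta_n)\bigr)$ with $F_n(\phi)=m_{U_n,\omega_n,\phi}(0;z)$ and $F_n\to F$ uniformly on $\bbT^\nu$, and both identify $w_{U,\omega,\theta}(z)=\int_{\bbT^\nu}F\,d\phi$ (you call it Weyl equidistribution, the paper cites Birkhoff). The divergence is in how the lack of rational independence of $\omega_n$ is handled. You replace $F$ by a trigonometric polynomial $P$ of degree $\le N$ and observe directly that $\bbE\bigl(P(\omega_n\cdot+\theta_n)\bigr)$ collapses to $\hat P(0)$ once $n$ is large enough that $k\omega_n\neq 0$ for all $0<\lvert k\rvert\le N$; rational independence of $\omega$ is used only to guarantee $c_N=\min_{0<|k|\le N}\lvert k\omega\rvert>0$. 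The paper instead uses a geometric density argument: uniform equicontinuity of $\phi\mapsto m_{U_n,\omega_n,\phi}(0;z)$ (from compactness of $\{U_n\}\cup\{U\}$ and Lemma~\ref{lemmaWeylJointlyContinuous}) combined with the fact that the orbits $\omega_n\bbR$ become $\epsilon$-dense in $\bbT^\nu$ as $n\to\infty$, which lets one compare $w_{U_n,\omega_n,\theta_n}$ to $w_{U_n,\omega_n,\phi}$ for an arbitrary $\phi$ and then integrate. Your Fourier route is more explicit about where the mean collapses and avoids the equicontinuity-on-compacts discussion, while the paper's route avoids Fourier expansions and is stated in a form that generalizes more readily. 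For the upgrade from pointwise to locally uniform convergence in $z$, you invoke Vitali--Montel via normality of Herglotz functions, which is clean and avoids having to make the $\delta$--$\epsilon$ bookkeeping uniform over a compact $z$-set; the paper does not spell this step out, relying implicitly on the fact that its equicontinuity estimates can be taken uniform over compacts in $z$. One small note: the appeal to quantitative $m$-function bounds is not actually needed once you invoke Herglotz structure, since a family of analytic maps $\bbC_+\to\bbC_+$ is automatically normal after Cayley transform, and any locally uniform subsequential limit must agree with the already-established pointwise limit $w$.
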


Note that the base space $\bbT^\nu$ is fixed in this result, but that $\omega_n$ can be rational; one of the applications is to periodic approximants of a quasiperiodic operator.

\begin{proof}
Denote $\omega = (\alpha_1, \dots, \alpha_\nu)$. Picking $t>0$ such that $1, \alpha_1 t, \dots, \alpha_\nu t$ are linearly independent over $\mathbb{Q}$ and applying Birkhoff's ergodic theorem to the discrete rotation with angle $\omega t$ implies
\begin{equation}\label{eq:womegatheta}
w_{U,\omega,\theta}(z) = \int_{\bbT^\nu} m_{U,\omega,\phi}(0;z) \, d\phi,
\end{equation}
where $d\phi$ denotes normalized Lebesgue measure on $\bbT^\nu$. Since $\omega_n$ is not assumed to have linearly independent components, this is not in general true for $w_{U_n,\omega_n, \theta_n}$. Nonetheless, we will show that
\begin{equation}\label{eq:womeganthetan}
\lim_{n\to\infty} \left\lvert w_{U_n,\omega_n, \theta_n}(z) - \int_{\mathbb{T}^\nu} m_{U_n,\omega_n,\phi}(0;z) \, d\phi \right\rvert = 0.
\end{equation}
Since $U_n \to U$ in $C(\mathbb{T}^\nu,\mathbb{R})$, this sequence is uniformly equicontinuous on $\mathbb{T}^\nu$ and $\{U_n \mid n\in\bbN \} \cup \{U\}$ is a compact subset of $C(\mathbb{T}^\nu,\mathbb{R})$. Combining the continuity from Lemma~\ref{lemmaWeylJointlyContinuous} with compactness implies uniform continuity: for any $\epsilon >0$ there is an $\delta>0$ such that $\lvert \phi - \phi' \rvert < \delta$ implies
\[
\lvert m_{U_n,\omega_n, \phi}(0;z) - m_{U_n,\omega_n, \phi'}(0;z) \rvert < \epsilon
\]
for all $n$, and therefore also
\[
\lvert w_{U_n,\omega_n, \phi}(z) - w_{U_n,\omega_n, \phi'}(z) \rvert < \epsilon.
\]
Since $\omega \mathbb{R}$ is dense in $\mathbb{T}^\nu$, for each $\epsilon > 0$ there exists $n_0$ such that for $n>n_0$, $\omega_n \mathbb{R}$ is $\epsilon$-dense in $\mathbb{T}^\nu$. Thus, for any $\phi \in \mathbb{T}^\nu$, we can find a $\phi'$ of the form $\phi' = \theta_n + \omega_n x$ such that $\lvert \phi - \phi' \rvert < \epsilon$. Since $w_{U_n,\omega_n, \theta_n} = w_{U_n,\omega_n, \theta_n + \omega_n x}$, this implies that for any $\phi$,
\[
\lvert w_{U_n,\omega_n, \theta_n}(z) - w_{U_n,\omega_n, \phi}(z) \rvert < \epsilon.
\]
Integrating in $\phi$ and using
\[
 \int_{\mathbb{T}^\nu} w_{U_n,\omega_n,\phi}(z) \, d\phi =  \int_{\mathbb{T}^\nu} m_{U_n,\omega_n,\phi}(0;z) \, d\phi,
\]
we obtain \eqref{eq:womeganthetan}.

The result now follows from \eqref{eq:womegatheta} and \eqref{eq:womeganthetan} since, by uniform continuity,
\[
\lim_{n\to\infty} \int_{\mathbb{T}^\nu} m_{U_n,\omega_n,\phi}(0;z) \, d\phi = \int_{\mathbb{T}^\nu} m_{U,\omega,\phi}(0;z) \, d\phi. \qedhere
\]
\end{proof}

Our main focus is on small quasiperiodic potentials with analytic sampling functions, and the following lemma is useful for specializing the above results.

\begin{lemma}\label{lemmaUniformCompacts}
Fix $\ve, \kappa_0 > 0$ and $\nu \in \mathbb{N}$.  Consider sampling functions $U_n, U : \bbT^\nu \to \bbR$ with exponentially decaying Fourier coefficients:
\[
\lvert \hat U_n(m) \rvert \le \ve e^{-\kappa_0 \lvert m\rvert}, \quad \lvert \hat U(m) \rvert \le \ve e^{-\kappa_0 \lvert m\rvert}, \quad \forall m \in \bbZ^\nu.
\]
Let $Q_n(x) = U_n(\omega_n x)$,   $Q(x) = U(\omega x)$, for a sequence  $\omega_n \to \omega$, where $\omega \in \bbR^\nu$ has linearly independent entries over $\mathbb{Q}$.  Then the following statements are equivalent:
\begin{enumerate}[(a)]
\item $Q_n \to Q$ uniformly on compact subsets of $\bbR$;
\item $U_n \to U$ uniformly on $\bbT^\nu$; 
\item For all $m\in \mathbb{Z}^\nu$, $\hat U_n(m) \to \hat U(m)$.
\end{enumerate}
\end{lemma}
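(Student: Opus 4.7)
The plan is to establish (b) $\Leftrightarrow$ (c), then (b) $\Rightarrow$ (a), and finally close the loop with (a) $\Rightarrow$ (b) by a compactness/density argument that uses the rational independence of $\omega$ in an essential way.

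First I would dispatch the easy equivalence (b) $\Leftrightarrow$ (c). The implication (b) $\Rightarrow$ (c) is trivial since $|\hat U_n(m) - \hat U(m)| \le \|U_n - U\|_\infty$. For (c) $\Rightarrow$ (b) the uniform decay hypothesis gives the dominating bound $|\hat U_n(m) - \hat U(m)| \le 2 \ve e^{-\kappa_0|m|}$, so for any $\eta > 0$ I can choose $M$ large enough that $\sum_{|m|>M} 2\ve e^{-\kappa_0 |m|} < \eta/2$, and then use (c) to ensure $\sum_{|m|\le M} |\hat U_n(m) - \hat U(m)| < \eta/2$ for $n$ large; since these bounds dominate $|U_n(\theta) - U(\theta)|$ uniformly in $\theta$, we get $\|U_n - U\|_\infty \to 0$. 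Next, for (b) $\Rightarrow$ (a), on any compact $K \subset \bbR$ I would split
\[
|Q_n(x) - Q(x)| \le |U_n(\omega_n x) - U(\omega_n x)| + |U(\omega_n x) - U(\omega x)|,
\]
bounding the first term by $\|U_n - U\|_\infty$ and the second by uniform continuity of $U$ on the compact torus together with $\omega_n x \to \omega x$ uniformly on $K$.

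The interesting direction is (a) $\Rightarrow$ (b), which I would handle by a subsequence compactness argument. The uniform decay hypothesis implies that $\{U_n\}$ is uniformly bounded and uniformly equicontinuous on $\bbT^\nu$, so Arzel\`a--Ascoli makes it precompact in $C(\bbT^\nu,\bbR)$. Given any subsequence of $\{U_n\}$, extract a further subsubsequence $U_{n_k} \to \tilde U$ uniformly on $\bbT^\nu$. Applying the already-proved (b) $\Rightarrow$ (a) along this subsubsequence yields $Q_{n_k}(x) \to \tilde U(\omega x)$ uniformly on compacts in $\bbR$; on the other hand, by hypothesis $Q_{n_k} \to Q = U(\omega \,\cdot\,)$ uniformly on compacts. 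Uniqueness of limits forces $\tilde U(\omega x) = U(\omega x)$ for every $x \in \bbR$. Since $\omega$ has rationally independent entries, the orbit $\{\omega x \bmod \bbZ^\nu : x \in \bbR\}$ is dense in $\bbT^\nu$ (Kronecker--Weyl), so by continuity $\tilde U = U$ everywhere on $\bbT^\nu$. Because every subsequence of $\{U_n\}$ has a subsubsequence converging to the same limit $U$, the full sequence converges: (b) holds.

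The main obstacle is precisely the uniqueness step in (a) $\Rightarrow$ (b): reconstructing $U$ on all of $\bbT^\nu$ from values of $Q$ along a single orbit. This is where the Diophantine-style hypothesis on $\omega$ is indispensable, and it is also where one sees that relaxing rational independence would immediately break the equivalence. Once this step is in hand, the rest is essentially uniform continuity and dominated tail estimates.
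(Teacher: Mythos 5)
Your proof is correct, and all three implications are handled soundly. The equivalence (b) $\Leftrightarrow$ (c) and the implication (b) $\Rightarrow$ (a) match the paper's treatment in substance (the paper simply declares (b) $\Rightarrow$ (a) trivial, while you spell out the triangle-inequality splitting, which is exactly the right content). For the interesting direction (a) $\Rightarrow$ (b), you and the paper rely on the same two pillars — equicontinuity from the summable Fourier bound, and density of the orbit $\{\omega x \bmod \bbZ^\nu\}$ — but you package them differently. The paper argues directly: equicontinuity lets one replace $U_n(\omega_n x)$ by $U_n(\omega x)$, giving pointwise convergence $U_n(\omega x)\to U(\omega x)$ on the dense orbit, and then equicontinuity again upgrades dense pointwise convergence to uniform convergence on $\bbT^\nu$. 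You instead route through Arzel\`a--Ascoli: precompactness of $\{U_n\}$ in $C(\bbT^\nu)$, extraction of a uniformly convergent subsubsequence $U_{n_k}\to\tilde U$, identification $\tilde U = U$ via the already-proved (b)$\Rightarrow$(a) plus density, and then the standard subsequence principle. Your argument is a hair longer and introduces a compactness machine where the paper gets by with two applications of equicontinuity, but it is perfectly rigorous and arguably more robust — the subsequence trick generalizes cleanly to settings where one cannot compare $U_n(\omega_n x)$ and $U_n(\omega x)$ so directly. Both approaches correctly pinpoint rational independence of $\omega$ (hence density of the orbit) as the indispensable hypothesis.
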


\begin{proof}
The equivalence of (b) and (c) comes from the observation that on the set of functions of the form
\[
\sum_{m\in \bbZ^\nu} c(m) e^{2\pi i m \phi}, \qquad \lvert c(m) \rvert \le \ve e^{-\kappa_0\lvert m\rvert}
\]
pointwise convergence of Fourier coefficients is equivalent to uniform convergence on $C(\bbT^\nu)$, simply because the a priori upper bound $\ve e^{-\kappa_0\lvert m\rvert}$ is summable in $m\in\bbZ^\nu$.

(b)$\implies$(a) is trivial.

(a)$\implies$(b): We first note that
\[
\sum_{m\in\bbZ^\nu} (1 +\lvert m \rvert) e^{-\kappa_0 \lvert m \rvert} < \infty
\]
so the functions $U_n, U$ are equicontinuous. By equicontinuity,  $U_n(\omega_n x) \to U(\omega x)$ implies $U_n( \omega x) \to U(\omega x)$, and then convergence on the dense set $\{x \omega \mid x \in \bbR \} \subset \bbT^\nu$ implies uniform convergence $U_n \to U$.
\end{proof}

\section{Marchenko--Ostrovskii Maps of Periodic Approximants} \label{sectionMOperiodic}

There are many results about the size and location of the gaps of periodic spectra. Those results are typically derived for unit period, and when rescaled to arbitrary period, the estimates depend exponentially on the period.  For the purpose of periodic approximation of almost periodic operators, such estimates are only useful when the periodic approximant is superexponentially close to the almost periodic potential, as in the Pastur--Tkachenko class of limit-periodic operators.

We will instead derive estimates for the case of analytic sampling functions, which will be uniform for a sequence of periodic approximants of a quasiperiodic operator. With this motivation, even though the potential is periodic, we use a quasiperiodic representation
\begin{equation}\label{eq:FCpotentialsVtil}
\tilde V(x) = \sum_{m \in \zv} c(m) e^{2 \pi i x m  \tilde \omega}\ , \quad x \in \mathbb{R},
\end{equation}
where $\tilde \omega = (\tilde \omega_1, \dots, \tilde \omega_\nu) \neq 0$ is a vector with rational components.
The potential $\tilde V$ is $T$-periodic, where $T>0$ is uniquely determined by the requirement that
\begin{equation}\label{periodimplicit}
\{ m \tilde \omega \mid m \in \mathbb{Z}^\nu \} = \frac 1T \mathbb{Z}.
\end{equation}
To further relate the quasiperiodic representation \eqref{eq:FCpotentialsVtil} of the periodic potential $\tilde V$ to periodic theory, we introduce the following notation. By the first isomorphism theorem for groups, the map $m\mapsto m\tilde\omega$ generates an isomorphism between the additive group $T^{-1}\mathbb{Z}$ and the quotient group
\[
\mathfrak{Z}(\tilde \omega) := \mathbb{Z}^\nu/\mathfrak{N}(\tilde\omega), \quad \mathfrak{N}(\tilde\omega) := \{ m \in \zv \mid m\tilde \omega = 0 \}.
\]
We denote the coset of $m\in \zv$ by $\mathfrak{m} = m + \mathfrak{N}(\tilde\omega)$, write $\mathfrak{m}\tilde\omega = m\tilde\omega$, and equip $\mathfrak{Z}(\tilde \omega)$ with the quotient metric 
\[
\lvert \mathfrak{m} - \mathfrak{l} \rvert := \min \{ \lvert m-l\rvert : m \in \mathfrak{m}, l\in\mathfrak{l} \}.
\]
By this bijection, we relabel gaps and comb domain slits with the label $\mathfrak{m}$, which is better suited to our analysis; throughout this section, we will always assume that $n \in\bbZ$ and $\mathfrak{m} \in \mathfrak{Z}(\tilde \omega)$ are related via
\begin{equation}\label{nofm}
\frac nT = \frac{n(\mathfrak{m})}T =  \mathfrak{m} \tilde\omega.
\end{equation}
For instance, the comb domain associated with the periodic potential $\tilde V$ will be denoted in the form
\[
\Omega = \bbC_+ \setminus  \bigcup_{ \mathfrak{m} \in \mathfrak{Z}(\tilde\omega) } (\mathfrak{m} \tilde\omega, \mathfrak{m}\tilde\omega + ih_{\mathfrak{m}}] 
\]
with $h_0 = 0$ and $h_{-\mathfrak{m}} = h_\mathfrak{m}$ for all $\mathfrak{m}$.

The goal of this section is to prove two-sided estimates between exponential decay of Fourier coefficients of $\tilde V$ and exponential decay of the slits of the comb domain. The estimates will be independent of the period of the potential and dependent only on a ``Diophantine condition in a box" satisfied by the rational vector $\tilde\omega = (\ell_j / t_j)_{j=1}^\nu$, $\ell_j \in \mathbb{Z}$, $t_j \in \bbN$,
\begin{equation}\label{eq:PAI7-5-8}
|m \tilde \omega| \ge a_0 |m|^{-b_0}, \quad 0<|m|\le \ol R_0,
\end{equation}
for some
\begin{equation}\label{eq:PAIombasicTcondition}
0 < a_0 < 1,\quad \nu < b_0 < \infty,\quad (\ol R_0)^{b_0} > \prod t_j.
\end{equation}

\begin{theorem}\label{thmtildef}
There exists $\ve^\zero = \ve^\zero (a_0, b_0, \kappa_0) > 0$ such that, if $\tilde\omega$ obeys \eqref{eq:PAI7-5-8}, \eqref{eq:PAIombasicTcondition} and $T$ is given by \eqref{periodimplicit}, then for any  locally $L^2$, $T$-periodic function  $\tilde V$, the following holds:
\begin{enumerate}[(i)]
\item If $\tilde V \in \mathcal{P}(\tilde\omega,\ve,\kappa_0)$ for some $\ve < \ve^\zero$, then the heights $h_\mathfrak{m}$ obey
\[
h_\mathfrak{m} \le \ve^{1/2} \exp\left( - \frac{\kappa_0}5 \lvert \mathfrak{m} \rvert \right).
\] 
\item If, conversely, the heights obey $h_\mathfrak{m} \le \ve \exp(-\kappa |\mathfrak{m}|)$ with $0 < \ve < \ve^\zero$, $\kappa \ge 5 \kappa_0$, then, in fact, $\tilde V \in \mathcal{P}(\tilde\omega,\ve^{1/3},\frac\kappa 3)$
\end{enumerate}
\end{theorem}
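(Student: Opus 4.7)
The plan is to reduce both directions to multiscale estimates from \cite{DG1,DGL1,DGL2}, executed on the finite dual group $\mathfrak{Z}(\tilde\omega)$. The hypothesis \eqref{eq:PAI7-5-8}--\eqref{eq:PAIombasicTcondition} is calibrated precisely to make all such multiscale estimates uniform in the period $T$ and in the rational frequency $\tilde\omega$, with constants depending only on $(a_0, b_0, \kappa_0)$. For part (i), I would begin with the geometric identification
\[
h_\mathfrak{m} = \max_{\lambda \in [E_\mathfrak{m}^-, E_\mathfrak{m}^+]}\bigl(-\Re w(\lambda)\bigr) = \max_{\lambda \in [E_\mathfrak{m}^-, E_\mathfrak{m}^+]} L(\lambda),
\]
where $L$ is the Lyapunov exponent. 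This follows from how $-iw$ conformally sends $\bbC\setminus[E_0,\infty)$ onto $\Omega$: on each gap, $\Im w$ is constant and equal by gap-labelling to the base point $\mathfrak{m}\tilde\omega$ of the slit, while $\Im(-iw) = -\Re w$ vanishes at gap edges and attains $h_\mathfrak{m}$ at some interior point. Combined with the multiscale upper bound on $L$ from \cite{DG1,DGL1}, specialized to periodic approximants, this yields the estimate in (i); the prefactor $\ve^{1/2}$ and exponent $\kappa_0/5$ encode the standard losses accumulated through the multiscale resolution of small denominators at the scales needed to see the $\mathfrak{m}$-th gap.

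For part (ii), the strategy is to invoke the inverse spectral construction of \cite{DGL2}: from the prescribed slit heights satisfying $h_\mathfrak{m} \le \ve\exp(-\kappa|\mathfrak{m}|)$ with $\kappa \ge 5\kappa_0$, together with admissible Dirichlet data, one builds a quasiperiodic potential in $\mathcal{P}(\tilde\omega,\ve^{1/3},\kappa/3)$ with the prescribed spectral invariants. Since $\tilde V$ shares those invariants, uniqueness of the inverse spectral correspondence in the periodic setting (where the isospectral torus is parametrized by Dirichlet data) forces $\tilde V$ to coincide with the constructed potential. The construction proceeds by iterative gap-opening / KAM-type fixed point on $\mathfrak{Z}(\tilde\omega)$: the loss $\ve \to \ve^{1/3}$ absorbs the multiplicative errors accumulated across scales, while the loss $\kappa \to \kappa/3$ accommodates the nonlinear coupling whereby opening each gap redistributes amplitude across the full Fourier spectrum.

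The principal obstacle is not the multiscale machinery itself, which can be imported from \cite{DG1,DGL1,DGL2}, but the bookkeeping required to verify that all constants, and in particular the specific exponents $1/2$, $1/3$, $\kappa_0/5$, and $\kappa/3$ in the statement, come out correctly and uniformly across all rational $\tilde\omega$ satisfying the Diophantine-in-a-box condition \eqref{eq:PAI7-5-8}. Since the framework of \cite{DGL1,DGL2} was designed with precisely this kind of uniformity in mind, the task reduces to a careful adaptation and accounting step rather than the development of new analytic tools; the payoff is that Theorem \ref{thmtildef} can then be fed into the periodic-approximation arguments of Sections \ref{sectionGeometric} and \ref{sectionTranslation} to obtain Theorems \ref{thmCombDirect} and \ref{thmCombInverse}.
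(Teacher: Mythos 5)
Your outline for part (i) is broadly in the right direction but glosses over the actual mechanism. The paper does not import a direct multiscale bound on the Lyapunov exponent; what it imports from \cite{DGL2} is a bound on the gap sizes $\gamma_\mathfrak{m}$ (Theorem~\ref{thmtildeb}(i)). The step from $\gamma_\mathfrak{m}$ to $h_\mathfrak{m}$ is the two-sided estimate of Lemma~\ref{lemma:productloglogm}, $\tfrac12 A(\mathfrak{m})^{-1}\gamma_\mathfrak{m}\le h_\mathfrak{m}\le 2A(\mathfrak{m})\gamma_\mathfrak{m}$, obtained by comparing each factor in the product formula \eqref{thetaprimeproduct} for $w'$ against $1\pm\gamma_{\mathfrak{m}'}/\eta_{\mathfrak{m},\mathfrak{m}'}$ and evaluating the remaining elliptic integral explicitly; the $\log\log$-bounded constants $A(\mathfrak{m})$ are what make the passage uniform in $T$. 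Your identity $h_\mathfrak{m}=\max_\lambda L(\lambda)$ is correct and appears in the proof, but on its own it reduces the problem to bounding $L$ at $\lambda_\mathfrak{m}$, which is not something \cite{DG1,DGL1} hand you directly.

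Part (ii) has a genuine logical gap. You propose to ``invoke the inverse spectral construction of \cite{DGL2}'' to build a potential in $\mathcal{P}(\tilde\omega,\ve^{1/3},\kappa/3)$ from the prescribed slits. But the available inverse results from \cite{DGL2} --- Theorem~\ref{thmtildeb}(ii) and Proposition~\ref{proptwosidedhm}(2) --- all carry the a priori hypothesis $\tilde V\in\mathcal{P}(\tilde\omega,\ve,\kappa_0,\alpha_0)$, i.e.\ they assume the potential is already small with (sub)exponentially decaying coefficients. Theorem~\ref{thmtildef}(ii) assumes only that $\tilde V$ is a locally $L^2$, $T$-periodic potential whose slits decay; removing the a priori analyticity assumption is exactly the new content of the theorem and is what enables the free-standing inverse result Theorem~\ref{thmCombInverse}. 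Your argument implicitly presupposes what it must prove. The paper closes this gap with a connectedness argument: one works on the convex set $\mathcal{S}(\ve,\kappa)\subset\ell_1^2$ of admissible slit sequences, defines the subset $\mathcal{G}(\ve,\kappa,\alpha)$ where the Fourier coefficient bound holds, and shows it is nonempty (it contains $0$), closed (each $d_\mathfrak{m}$ is continuous), and open. Openness is the crux: by the Marchenko--Ostrovskii real analytic diffeomorphism $\Upsilon:q\mapsto(\rho_n(q))$ from even $L^2$ potentials to $\ell^2_1$, the map $\rho\mapsto(d_\mathfrak{m}(\rho))$ is $\ell^2$-continuous; combined with the a priori decay \eqref{eq:8coeffexpdecayQ2} (from facts (f)--(g) of periodic theory, which \emph{do} apply to general $L^2$ potentials), an $\ell^2$-small perturbation preserves the bound out to a scale $M$, past which the a priori bound takes over, and then Proposition~\ref{proptwosidedhm}(2) tightens the estimate back. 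This is carried out first for even potentials, then extended to the full isospectral torus via the $(\mu_n,\sigma_n)$ parametrization, and finally $\alpha\uparrow 1$ upgrades subexponential to exponential decay. None of this continuity/connectedness machinery appears in your proposal, and without it the ``bookkeeping'' you describe does not reduce to an accounting exercise but leaves the central difficulty untouched.
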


The proof of Theorem~\ref{thmtildef} consists of two parts. The first part extends the method developed in \cite{DG1,DGL2,DGL1} to establish two sided estimates between subexponential decay of Fourier coefficients and of slits, under the a priori assumption of subexponential decay of Fourier coefficients. The second part uses estimates from the theory of periodic Schr\"odinger operators to remove the a priori assumption and improve the statement from subexponential decay to exponential decay. The first part is the following proposition.

\begin{prop}\label{proptwosidedhm} There exists $\ve_1 = \ve_1(a_0,b_0,\kappa_0,\nu) > 0$ such that for $\frac 12 \le \alpha_0 \le 1$, $0< \ve \le \ve_1$, if $\tilde V \in \mathcal{P}(\tilde\omega,\ve,\kappa_0,\alpha_0)$, then:
\begin{enumerate}[(1)]
\item The heights $h_\mathfrak{m}$ obey $h_\mathfrak{m} \le \ve^{1/2} \exp\left( - \frac{\kappa_0}5 \lvert \mathfrak{m} \rvert^{\alpha_0} \right)$.
\item There exists $\ve_2 = \ve_2 (a_0, b_0, \kappa_0) > 0$ such that if the $h_\mathfrak{m}$ obey 
$h_\mathfrak{m} \le \ve' \exp(-\kappa''_0 |\mathfrak{m}|^{\alpha'_0})$ with $0 < \ve' < \ve_2$, $\kappa''_0 \ge 5 \kappa_0$, $\alpha'_0 \ge \alpha_0$, then, in fact, $\tilde V \in \mathcal{P}(\tilde\omega,(\ve')^{1/3},\frac{\kappa''_0}{3},\alpha'_0)$.
\end{enumerate}
\end{prop}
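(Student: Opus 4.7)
The plan is to extend the multiscale analysis (MSA) on the dual group from \cite{DG1, DGL1, DGL2} from the exponential setting $\alpha_0 = 1$ to the subexponential regime $\alpha_0 \in [\tfrac12, 1]$, working uniformly in the period $T$ of $\tilde V$. The a priori hypothesis $\tilde V \in \mathcal{P}(\tilde\omega, \ve, \kappa_0, \alpha_0)$ is used in both parts to initialize the MSA. Part (1) reads the slit heights off from the MSA output directly, while part (2) is a bootstrap: the improved height estimates are fed back into the MSA representation of the Fourier coefficients to produce a stronger decay bound.

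For part (1), I would run the inductive MSA scheme on the Fourier-transformed eigenvalue problem. At each scale $s$, one constructs approximate eigenvalues $E_\mathfrak{m}^{\pm,(s)}$ and a local Green's function satisfying off-diagonal decay of the form $|G(m,n;E)| \le C \exp\bigl(-\kappa_s |m-n|^{\alpha_0}\bigr)$, with $\kappa_s$ decreasing across scales to a limit no worse than $\kappa_0/5$. The only modification compared to the exponential case is that subexponential weights satisfy the weak submultiplicativity $|m+n|^{\alpha_0} \le |m|^{\alpha_0} + |n|^{\alpha_0}$ for $\alpha_0 \le 1$, so convolution estimates absorb a small loss in the exponent into the scale-dependent constants. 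The Diophantine box condition \eqref{eq:PAI7-5-8}, \eqref{eq:PAIombasicTcondition} controls every small denominator encountered in the MSA uniformly, producing estimates independent of $T$. The gap size $E_\mathfrak{m}^+ - E_\mathfrak{m}^-$ is then bounded by the relevant off-diagonal matrix element of the resolvent at the resonant site $\mathfrak{m}$, yielding the bound $\ve^{1/2}\exp\bigl(-\kappa_0|\mathfrak{m}|^{\alpha_0}/5\bigr)$. Since the tip of each slit of the comb lies above the midpoint of the corresponding gap with height comparable to the gap size (the Lyapunov-exponent/conformal-radius estimate used in \cite{DG1, DGL3}), the same bound propagates to $h_\mathfrak{m}$.

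For part (2), I would reuse the MSA output from part (1): the construction provides a convergent representation of each Fourier coefficient $c(n)$ of $\tilde V$ as a sum over resonant cosets $\mathfrak{l} \in \mathfrak{Z}(\tilde\omega)$ of contributions controlled by the corresponding heights $h_\mathfrak{l}$ and by factors decaying in $|\mathfrak{l} - \mathfrak{m}|^{\alpha_0}$, where $\mathfrak{m}$ is the coset of $n$. Plugging in $h_\mathfrak{l} \le \ve'\exp\bigl(-\kappa_0''|\mathfrak{l}|^{\alpha_0'}\bigr)$ with $\kappa_0'' \ge 5\kappa_0$, $\alpha_0' \ge \alpha_0$, the dominant contribution comes from $\mathfrak{l} = \mathfrak{m}$, while the remaining terms sum to lower order by the Diophantine separation of frequencies and a geometric series estimate. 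A Schwarz-type inequality, combined with the safety factor needed to absorb the MSA error, yields the advertised bound $|c(n)| \le (\ve')^{1/3}\exp\bigl(-\kappa_0''|n|^{\alpha_0'}/3\bigr)$; the exponents $1/3$ on $\ve'$ and on $\kappa_0''$ reflect the room that the MSA remainder must be given in order to close the iteration.

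The main obstacle is the bootstrap in part (2). Although $c(n)$ is defined as a Fourier coefficient of the periodic function $\tilde V$ on a single period, its dependence on the period $T$ must not enter the final estimate. The resolution is to exploit the quotient isomorphism $\mathfrak{Z}(\tilde\omega) \cong T^{-1}\mathbb{Z}$ and the Diophantine box condition: for $|n| \le \bar R_0$, the coset $\mathfrak{m} = n + \mathfrak{N}(\tilde\omega)$ contains $n$ as its canonical smallest-norm representative, and all other representatives of $\mathfrak{m}$ lie at $\ell^1$-distance $\ge \bar R_0$, so they contribute superexponentially less. This gives a $T$-independent correspondence between the two weighted $\ell^\infty$ norms and closes the bootstrap with constants depending only on $a_0, b_0, \kappa_0$, and $\nu$.
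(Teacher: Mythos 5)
Your proposal differs substantially in structure from the paper's argument and has a genuine gap at its center. The paper does \emph{not} re-run the multiscale analysis for this proposition. Instead, it treats the two hard pieces as black boxes: (a) Theorem~\ref{thmtildeb} (Theorem $\tilde B$ of \cite{DGL2}), which already covers $\alpha_0 \in [\frac12,1]$ and gives two-sided estimates between decay of Fourier coefficients $c(\mathfrak{m})$ and decay of gap sizes $\gamma_\mathfrak{m}$; and (b) Lemma~\ref{lemma:productloglogm}, which provides the precise two-sided estimate
\[
\tfrac12 A(\mathfrak{m})^{-1}\gamma_\mathfrak{m} \le h_\mathfrak{m} \le 2 A(\mathfrak{m})\gamma_\mathfrak{m},
\qquad
A(\mathfrak{m}) \le C\lvert\mathfrak{m}\rvert^{C\log_2\log_2\lvert\mathfrak{m}\rvert},
\]
derived from the infinite product formula for $w'$. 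The proof of Proposition~\ref{proptwosidedhm} is then a one-paragraph bookkeeping exercise: absorb $A(\mathfrak{m})$ into a slight loss in the exponent, apply (a), done. You are re-deriving what is already cited, while the actual content lies in the height/gap comparison, which you gloss over.

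The specific gap: in part (1) you assert the slit height is ``comparable to the gap size'' and cite a vague Lyapunov/conformal-radius heuristic, but the comparability factor $A(\mathfrak{m})$ is \emph{not} uniformly bounded --- it grows like $\lvert\mathfrak{m}\rvert^{C\log_2\log_2\lvert\mathfrak{m}\rvert}$, and the fact that this growth is slow enough to be absorbed into a change of constant in front of the subexponential weight is exactly what must be proved. Without Lemma~\ref{lemma:productloglogm} (which requires the product formula \eqref{thetaprimeproduct} and the separate estimate \eqref{productAmbound1}), the transition from gap sizes to heights is unjustified.

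In part (2) the structural problem is worse. You propose to read off a ``convergent representation of each Fourier coefficient $c(n)$ as a sum over resonant cosets controlled by the heights $h_\mathfrak{l}$.'' The multiscale analysis does not produce such a representation; what Theorem~\ref{thmtildeb}(ii) gives is the implication from gap-size decay to Fourier-coefficient decay, and heights enter only through the separate two-sided relation to gap sizes. Your proposed direct bootstrap from heights to Fourier coefficients would require building a new object that does not appear in \cite{DG1,DGL1,DGL2}. The correct route is the paper's: use the $A(\mathfrak{m})$ estimate to convert $h_\mathfrak{m}\le\ve'\exp(-\kappa_0''\lvert\mathfrak{m}\rvert^{\alpha_0'})$ into a bound on $\gamma_\mathfrak{m}$ (with a small loss in $\kappa$), and then invoke Theorem~\ref{thmtildeb}(ii). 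Your concluding remarks about $T$-independence are sound but misplaced, since those issues are already handled inside the cited results.
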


It was shown in  \cite[Lemma 2.7]{DGL2} that \eqref{eq:PAI7-5-8} and \eqref{eq:PAIombasicTcondition} implies that for any $\mathfrak{m} \neq 0$,
\[
\lvert \mathfrak{m} \tilde\omega \rvert \ge a_0 \lvert \mathfrak{m} \rvert^{-b_0}.
\]
Moreover, \eqref{eq:FCpotentialsVtil} can be rewritten by grouping all terms with the same coset,
\begin{equation}\label{eq:FCpotentialsVtilcoset}
\tilde V(x) = \sum_{\mathfrak{m} \in \mathfrak{Z}(\tilde\omega)} c(\mathfrak{m} ) e^{2 \pi i x \mathfrak{m}  \tilde \omega}\ , \quad x \in \mathbb{R}.
\end{equation}

By Lemma 2.5 of \cite{DGL2}, if $\tilde V \in \mathcal{P}(\tilde\omega,\ve,\kappa_0,\alpha_0)$ for some $\alpha_0 \in [\frac 12, 1]$, then there is a constant $D= D(\kappa_0,\nu)$, non-increasing in $\kappa_0$, such that
\[
\lvert c(\mathfrak{m}) \rvert \le  D(\kappa_0,\nu) \ve \exp\left(- \frac{\kappa_0}2 \lvert \mathfrak{m}\rvert^{\alpha_0} \right), \qquad \forall \mathfrak{m} \in \mathfrak{Z}(\tilde\omega).
\]
It will sometimes be convenient to assume $D(\kappa_0,\nu)\ge 1$. The trivial, but nonetheless useful, converse is that if $\tilde V$ has a representation \eqref{eq:FCpotentialsVtilcoset} such that $\lvert c(\mathfrak{m}) \rvert \le  \ve \exp\left(- \kappa_0 \lvert \mathfrak{m}\rvert^{\alpha_0} \right)$ for all $\mathfrak{m} \in \mathfrak{Z}(\tilde\omega)$, then $\tilde V \in \mathcal{P}(\tilde\omega,\ve,\kappa_0,\alpha_0)$. 

We denote
\[
\mathfrak{Z}(\tilde\omega)_+ :=\{\mathfrak{m}\in \mathfrak{Z}(\tilde\omega): \mathfrak{m} \tilde\omega>0\}
\]
and denote, for $\mathfrak{m} \in \mathfrak{Z}(\tilde\omega)_+$, the sizes of gaps by
\[
\gamma_{\mathfrak{m}} = E^+_{\mathfrak{m}} - E^-_{\mathfrak{m}}.
\]
These are studied in \cite{DGL2}, where the following is proved:

\begin{theorem}[{\cite[Theorem $\tilde B$]{DGL2}}] \label{thmtildeb}
Assume that $\tilde V \in \mathcal{P}(\tilde\omega,\ve,\kappa_0,\alpha_0)$, where $\kappa_0 \in (0,1]$, $\alpha_0 \in [\frac 12, 1]$, and $\tilde \omega$ obeys \eqref{eq:PAI7-5-8}, \eqref{eq:PAIombasicTcondition}. There is a $\ve_0 = \ve_0(a_0,b_0,\kappa_0,\nu)>0$ such that, if $\ve \le \ve_0$, then
\begin{enumerate}[(i)]
\item The gap sizes obey
\[
\gamma_{\mathfrak{m}} \le 2  \ve \exp(-\frac{\kappa_0}{2} \lvert {\mathfrak{m}}\rvert^{\alpha_0}), \qquad \forall {\mathfrak{m}} \in \mathfrak{Z}(\tilde\omega)_+ .
\]
\item There exists $\ve^\zero = \ve^\zero (a_0, b_0, \kappa_0,\nu) > 0$ such that if $0 < \ve'' < \ve^\zero$, $\kappa'_0 \ge 4 \kappa_0$, $\alpha'_0 \ge \alpha_0$ and
\[
\gamma_{\mathfrak{m}} \le \ve'' \exp(-\kappa'_0 \lvert \mathfrak{m} \rvert^{\alpha'_0}), \qquad \forall {\mathfrak{m}} \in \mathfrak{Z}(\tilde\omega)_+ ,
\]
then, in fact, $\tilde V \in \mathcal{P}(\tilde\omega,\sqrt{2\ve''} ,\frac{\kappa'_0}{2} ,\alpha'_0)$.
\end{enumerate}
\end{theorem}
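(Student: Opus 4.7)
Since this statement is quoted from \cite{DGL2}, my proposal is to sketch the argument given there, which rests on the multiscale analysis in the Fourier basis developed in \cite{DG1} and extended to the abelian-group setting in \cite{DGL1}.  Passing to Bloch/Fourier coordinates, the eigenvalue equation $H_{\tilde V}\psi = k^2\psi$ becomes an infinite system on $\mathfrak{Z}(\tilde\omega)$ with diagonal entries $(k+\mathfrak{n}\tilde\omega)^2 - k^2$ and off-diagonal entries $c(\mathfrak{m}-\mathfrak{n})$.  The gap at $\mathfrak{m}\tilde\omega$ is driven by the resonance at $k_\mathfrak{m} := -\mathfrak{m}\tilde\omega/2$ between modes $\mathfrak{n}=0$ and $\mathfrak{n}=\mathfrak{m}$, while all other modes are off-resonant by margins controlled by the Diophantine condition \eqref{eq:PAI7-5-8}.

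For part (i), first I would apply Lemma 2.5 of \cite{DGL2} to turn the hypothesis into the pointwise Fourier bound $\lvert c(\mathfrak{m})\rvert \le D(\kappa_0,\nu)\,\ve\exp(-\tfrac{\kappa_0}{2}\lvert\mathfrak{m}\rvert^{\alpha_0})$.  Then at each scale $R_s$ I truncate to $\lvert\mathfrak{n}\rvert \le R_s$, eliminate non-resonant modes by a Schur complement, and derive an effective $2\times 2$ block near $k_\mathfrak{m}$ whose off-diagonal coupling is $c(\mathfrak{m}) + O(\ve^2)$.  The small-denominator losses introduced at each elimination step are polynomial in $\lvert\mathfrak{n}\rvert$ by \eqref{eq:PAI7-5-8}, hence absorbable in the exponential decay of $c$.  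Passing to $R_s \to \infty$ yields $\gamma_\mathfrak{m} \le 2(1+O(\ve))\lvert c(\mathfrak{m})\rvert$, which, upon absorbing $D(\kappa_0,\nu)$ into the exponential at the price of a smallness condition on $\ve$, gives the claimed bound $\gamma_\mathfrak{m} \le 2\ve\exp(-\tfrac{\kappa_0}{2}\lvert\mathfrak{m}\rvert^{\alpha_0})$.

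For part (ii), I would invert the analysis.  The Fourier coefficient $c(\mathfrak{m})$ can be recovered from the spectrum and the Dirichlet data $\mu_\mathfrak{k}(x) \in [E_\mathfrak{k}^-, E_\mathfrak{k}^+]$ via a trace-type expansion along the almost periodic flow of $\mu_\mathfrak{k}$ in $x$.  Organizing that expansion by multi-indices $(\mathfrak{k}_1,\dots,\mathfrak{k}_j)$ with $\sum \mathfrak{k}_i = \mathfrak{m}$, each term is bounded by $\prod_i \gamma_{\mathfrak{k}_i}$ times small-denominator factors controlled polynomially by \eqref{eq:PAI7-5-8}.  The leading $j=2$ contribution is of order $\gamma_{\mathfrak{k}_1}\gamma_{\mathfrak{m}-\mathfrak{k}_1}$, which produces both the square-root loss $\ve'' \to \sqrt{2\ve''}$ and, after minimization over $\mathfrak{k}_1$ using the subadditivity $\lvert\mathfrak{k}_1\rvert^{\alpha'_0} + \lvert\mathfrak{m}-\mathfrak{k}_1\rvert^{\alpha'_0} \ge \lvert\mathfrak{m}\rvert^{\alpha'_0}$ (valid for $\alpha'_0 \in [\tfrac12,1]$), the rate-halving $\kappa'_0 \to \kappa'_0/2$.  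The hypothesis $\kappa'_0 \ge 4\kappa_0$ provides exactly the exponential headroom needed for this halving plus a factor-of-two margin used to sum the multi-index series against the a priori rate $\kappa_0$ coming from (i).

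The main obstacle is controlling the multi-scale cascade in the inverse direction (ii): the Dirichlet data depend on $x$ through an almost periodic flow whose Fourier structure must be disentangled scale by scale, and the combinatorial proliferation of multi-indices must be dominated uniformly in $\mathfrak{m}$.  This is precisely the content of the multiscale scheme on the abelian group $\mathfrak{Z}(\tilde\omega)$ of \cite{DGL1,DGL2}, which isolates resonances of each scale and uses \eqref{eq:PAI7-5-8} to establish geometric decay of higher-scale corrections.
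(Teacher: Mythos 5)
This theorem is imported verbatim from \cite{DGL2} as Theorem~$\tilde B$; the present paper cites it without reproducing a proof, so there is no proof here to check your sketch against. Your high-level picture --- multiscale elimination on the Fourier lattice $\mathfrak{Z}(\tilde\omega)$, resonance of the $0$ and $\mathfrak{m}$ modes at $k_\mathfrak{m}=-\mathfrak{m}\tilde\omega/2$ driving the $\mathfrak{m}$-th gap, the Diophantine condition \eqref{eq:PAI7-5-8} controlling small denominators, and a trace-formula/Dubrovin-flow inversion for part~(ii) --- is consistent with how this paper characterizes the methods of \cite{DG1,DGL1,DGL2}, so you have the right strategy in outline.

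One detail in your account of part~(ii) does not hold up as written. The subadditivity $\lvert\mathfrak{k}_1\rvert^{\alpha_0'}+\lvert\mathfrak{m}-\mathfrak{k}_1\rvert^{\alpha_0'}\ge\lvert\mathfrak{m}\rvert^{\alpha_0'}$ applied to a $j=2$ term $\gamma_{\mathfrak{k}_1}\gamma_{\mathfrak{m}-\mathfrak{k}_1}\le(\ve'')^2\exp\bigl(-\kappa_0'(\lvert\mathfrak{k}_1\rvert^{\alpha_0'}+\lvert\mathfrak{m}-\mathfrak{k}_1\rvert^{\alpha_0'})\bigr)$ \emph{preserves} the rate $\kappa_0'$; it does not by itself produce the halving $\kappa_0'\to\kappa_0'/2$. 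The halving (and the hypothesis $\kappa_0'\ge4\kappa_0$) must instead come from absorbing the subexponential distortion factors of type $A(\mathfrak{m})$ (which appear when converting between gap data and Fourier data; cf.\ \eqref{productAmbound1} in this paper), from summing the multi-index series uniformly against the a~priori $\kappa_0$ rate from part~(i), and from the coset-averaging loss $D(\kappa_0,\nu)$ of \cite[Lemma~2.5]{DGL2} --- not from the pairing inequality. Since the paper at hand does not supply the proof, this cannot be confirmed line by line from the present source, but your attribution of the rate-loss to subadditivity is a misdirection that would need to be repaired against the actual argument in \cite{DGL2}.
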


In the study of periodic Schr\"odinger operators, the basic object is the discriminant $\Delta$, which is the trace of the monodromy matrix and is related to the Marchenko--Ostrovskii map by
\begin{equation}\label{eqnDeltaMO}
\Delta(z) = 2 \cos( i T w(z)),
\end{equation}
where $T$ denotes the period. In each gap closure $[E_{\mathfrak{m}}^-, E_{\mathfrak{m}}^+]$, the derivative of the discriminant has precisely one zero $\lambda_{\mathfrak{m}}$, and it has no other zeros in $\bbC$. The derivative of the Marchenko--Ostrovskii map has the product representation
\begin{equation}\label{thetaprimeproduct}
w'(E) = - \left( \frac 1{4(E_0 -E)} \prod_{\mathfrak{m}' \in \mathfrak{Z}(\tilde \omega)_+} \frac{(\lambda_{\mathfrak{m}'}- E)^2}{(E_{\mathfrak{m}'}^- -  E)(E_{\mathfrak{m}'}^+ - E)} \right)^{1/2},
\end{equation}
which follows from the product representations of $\Delta^2- 4$ and $\Delta'$ (see, e.g., \cite[Section VII.B]{KP}) after differentiating \eqref{eqnDeltaMO}. The representation is derived on $\bbC_+$ but extends by analyticity to $\bbC \setminus \cS$. In particular, $w'$ is analytic on $\bbC \setminus \cS$ and $\lambda_{\mathfrak{m}}$ are zeros of $w'$. This product formula can also be concluded from a connection to Schwarz--Christoffel mappings, and can be written as an exponential Herglotz representation, see e.g. \cite{EY}, \cite[Section 6]{EL}.  In particular, if $\sum \gamma_m < \infty$, the exponential Herglotz representation has the form
\[
i w'(z) = \frac 1{2\sqrt{E_0 - z}} e^{\int_{[E_0,\infty) \setminus \cS} \frac{\xi(\lambda)}{\lambda - z} \,d\lambda}, 
\]
where
\[
\xi(\lambda) = \begin{cases} 1 & \exists m: \lambda \in (E_m^-, \lambda_m) \\
 -1 & \exists m: \lambda \in (\lambda_m,E_m^+) \\
0 & \text{else}
\end{cases},
\]
compare \cite{EY}, \cite[Section 6]{EL}.

For distinct $\mathfrak{m}, \mathfrak{m}' \in \mathfrak{Z}(\tilde \omega)_+$, denote
\[
\eta_{\mathfrak{m},\mathfrak{m}'} = \dist([E_\mathfrak{m}^-,E_\mathfrak{m}^+], [E_{\mathfrak{m}'}^-, E_{\mathfrak{m}'}^+]).
\]
Denote also
\[
\eta_{\mathfrak{m},0} = \dist ([E_\mathfrak{m}^-,E_\mathfrak{m}^+],  E_0)
\]
for $\mathfrak{m} \in \mathfrak{Z}(\tilde \omega)_+$, and define the constants
\begin{equation}\label{Amdefinition}
A(\mathfrak{m}) = \max \left( 2(\eta_{\mathfrak{m},0}+\gamma_\mathfrak{m})^{1/2}, \frac 1{2\eta_{\mathfrak{m},0}^{1/2}} \right) \prod_{\substack{\mathfrak{m}'  \in \mathfrak{Z}(\tilde \omega)_+ \\ \mathfrak{m}' \neq \mathfrak{m}}} \left( 1 + \frac{\gamma_{\mathfrak{m}'}}{\eta_{\mathfrak{m},\mathfrak{m}'}} \right)^{1/2}.
\end{equation}

\begin{lemma} There exists $C = C(a_0,b_0,\kappa_0,\nu)<\infty$ such that, under the assumptions of Theorem~\ref{thmtildeb}, for any $\mathfrak{m}$ with $\mathfrak{m}\omega > 0$,
\begin{equation}\label{productAmbound1}
A(\mathfrak{m}) \le C \lvert \mathfrak{m} \rvert^{C \log_2 \log_2 \lvert \mathfrak{m} \rvert}.
\end{equation}
\end{lemma}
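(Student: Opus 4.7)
The quantity $A(\mathfrak{m})$ in \eqref{Amdefinition} is a product of a prefactor controlling the distance from $E_0$ to the $\mathfrak{m}$-th gap, and an infinite product capturing the cumulative influence of all other gaps. I would bound the two pieces separately and combine.

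\emph{Step 1 (Prefactor).} The perturbation theory for small-potential periodic Schr\"odinger operators, as used in \cite{DG1} and \cite{DGL2}, shows that the $\mathfrak{m}$-th gap lies within $O(\gamma_\mathfrak{m})$ of the free Bloch energy $\pi^2(\mathfrak{m}\tilde\omega)^2$. Combined with the Diophantine condition $|\mathfrak{m}\tilde\omega|\ge a_0 |\mathfrak{m}|^{-b_0}$, this yields $c\, a_0^2 |\mathfrak{m}|^{-2b_0} \le \eta_{\mathfrak{m},0} \le C |\mathfrak{m}|^2$ (the upper bound coming from the trivial estimate $|\mathfrak{m}\tilde\omega| \le |\mathfrak{m}|\max_j|\tilde\omega_j|$), so
\[
\max\!\left(2(\eta_{\mathfrak{m},0}+\gamma_\mathfrak{m})^{1/2},\;\frac{1}{2\eta_{\mathfrak{m},0}^{1/2}}\right) \le C |\mathfrak{m}|^{b_0}.
\]

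\emph{Step 2 (Reduction to a logarithmic sum).} Writing the infinite product as $\exp(\tfrac 12 \Sigma(\mathfrak{m}))$ with $\Sigma(\mathfrak{m}) := \sum_{\mathfrak{m}'\neq \mathfrak{m}} \log\!\bigl(1+\gamma_{\mathfrak{m}'}/\eta_{\mathfrak{m},\mathfrak{m}'}\bigr)$, the problem reduces to proving $\Sigma(\mathfrak{m}) \le C \log|\mathfrak{m}|\, \log\log|\mathfrak{m}|$. Using the factorization $(\mathfrak{m}\tilde\omega)^2 - (\mathfrak{m}'\tilde\omega)^2 = (\mathfrak{m}-\mathfrak{m}')\tilde\omega \cdot (\mathfrak{m}+\mathfrak{m}')\tilde\omega$ together with the Diophantine condition applied to $\mathfrak{m}\pm\mathfrak{m}'$, I obtain
\[
\eta_{\mathfrak{m},\mathfrak{m}'} \ge c\, a_0^2\, |\mathfrak{m}+\mathfrak{m}'|^{-b_0}\, |\mathfrak{m}-\mathfrak{m}'|^{-b_0}
\]
(whenever both differences stay in the Diophantine window $\le \ol R_0$, which is guaranteed by \eqref{eq:PAIombasicTcondition}).

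\emph{Step 3 (Multiscale decomposition).} The crux is to split $\Sigma(\mathfrak{m})$ along a doubly-exponential hierarchy of scales $L_j = L_0^{2^j}$; reaching $|\mathfrak{m}|$ takes exactly $J \asymp \log_2\log_2|\mathfrak{m}|$ such scales, which explains the shape of the bound. For $\mathfrak{m}'$ beyond the critical scale $L_{j^\ast} \sim (\log|\mathfrak{m}|)^{1/\alpha_0}$, the subexponential decay $\gamma_{\mathfrak{m}'} \le 2\ve \exp(-\tfrac{\kappa_0}{2}|\mathfrak{m}'|^{\alpha_0})$ from Theorem~\ref{thmtildeb}(i) dominates the polynomial blowup of $\eta^{-1}$ and gives a geometrically convergent tail $O(1)$. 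For each of the $O(\log\log|\mathfrak{m}|)$ remaining scales $j \le j^\ast$, I would isolate the few ``resonant'' $\mathfrak{m}'$ (with $\gamma_{\mathfrak{m}'}/\eta_{\mathfrak{m},\mathfrak{m}'} \ge 1$) and bound their log-contribution individually by $\log^+(\gamma/\eta) \le C\log|\mathfrak{m}|$, while bounding nonresonant terms by $\log(1+x) \le x$ and using summability of $\sum \gamma_{\mathfrak{m}'} |\mathfrak{m}-\mathfrak{m}'|^{b_0}$. Combining scales produces $\Sigma(\mathfrak{m}) \le C\log|\mathfrak{m}| \log\log|\mathfrak{m}|$, and the prefactor $|\mathfrak{m}|^{b_0}$ is absorbed into the exponent for $|\mathfrak{m}|$ large.

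\emph{Main obstacle.} The sharp per-scale bound of $O(\log|\mathfrak{m}|)$ is the delicate point: a naive count of $\mathfrak{m}'$ in the box $\{|\mathfrak{m}'|\le L_j\}$ gives $|D_j| \lesssim L_j^\nu$ which would pollute the bound with an extra $L_j^\nu$ factor. Taming this requires exploiting the Diophantine condition to show that, within each scale, only $O(\log|\mathfrak{m}|)$ of the $\mathfrak{m}'$ can actually be resonant (because $(\mathfrak{m}-\mathfrak{m}')\tilde\omega$ cannot be small for many $\mathfrak{m}'$ at once), while the vast majority of $\mathfrak{m}'$ contribute only via the summable nonresonant tail. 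It is precisely this combinatorial counting against the Diophantine condition that produces the clean $\log\log$ hierarchy.
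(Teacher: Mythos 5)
The paper itself does not give a self-contained argument: its ``proof'' is a two-paragraph citation to earlier work (\cite{BDGL1,DGL2}, in particular \cite[Lemma 6.1]{BDGL1}), observing only that (a) the new prefactor in \eqref{Amdefinition} is still polynomially bounded, (b) indexing by $\mathfrak{Z}(\tilde\omega)$ rather than $\zv$ changes nothing, and (c) passing from $\alpha_0=1$ to $\alpha_0\in[1/2,1]$ only replaces exponential by subexponential factors. So you are attempting to reconstruct a proof that the authors chose not to reproduce. Your Steps~1 and~2 match what these caveats assume (polynomial prefactor; $\eta_{\mathfrak{m},\mathfrak{m}'}\gtrsim a_0^2|\mathfrak{m}+\mathfrak{m}'|^{-b_0}|\mathfrak{m}-\mathfrak{m}'|^{-b_0}$, which rests on the gap-location result of \cite{DG1,DGL2}), except that your parenthetical about ``both differences stay in the Diophantine window $\le\ol R_0$'' is off — the relevant fact is \cite[Lemma 2.7]{DGL2}, which upgrades \eqref{eq:PAI7-5-8}--\eqref{eq:PAIombasicTcondition} to the full bound $|\mathfrak{m}\tilde\omega|\ge a_0|\mathfrak{m}|^{-b_0}$ for \emph{all} nonzero $\mathfrak{m}\in\mathfrak{Z}(\tilde\omega)$, not merely those inside the box.

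Step~3 is where the genuine gap lies, and you flag it yourself. Two concrete issues. First, the scale count is internally inconsistent: you assert both that the decomposition runs to $|\mathfrak{m}|$ in $J\asymp\log_2\log_2|\mathfrak{m}|$ doubly-exponential scales and that everything beyond $L_{j^\ast}\sim(\log|\mathfrak{m}|)^{1/\alpha_0}$ is negligible; but the number of scales up to $L_{j^\ast}$ is $\log_2\log_2 L_{j^\ast}\sim\log\log\log|\mathfrak{m}|$, not $\log\log|\mathfrak{m}|$. The $\log\log|\mathfrak{m}|$ factor in the target bound really does require tracking the decomposition all the way to $|\mathfrak{m}|$, so the cutoff reasoning must be reorganized. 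Second, the mechanism you propose for taming the $L_j^\nu$ overcount (``only $O(\log|\mathfrak{m}|)$ resonant $\mathfrak{m}'$ per scale'') is neither proved nor the right shape. What the Diophantine separation of the approximate gap centers $(\pi\mathfrak{m}'\tilde\omega)^2$ against the subexponential smallness of $\gamma_{\mathfrak{m}'}$ actually gives is $O(1)$ resonant indices per scale, each contributing $O(\log|\mathfrak{m}|)$; meanwhile the \emph{nonresonant} sum at each scale needs the same minimal-separation argument (a harmonic sum against the gap-center spacing), and your appeal to ``summability of $\sum\gamma_{\mathfrak{m}'}|\mathfrak{m}-\mathfrak{m}'|^{b_0}$'' does not control it — that sum is bounded only by $O(\ve|\mathfrak{m}|^{b_0})$ for $|\mathfrak{m}'|$ small, which is not good enough. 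As written, your Step~3 would give something like $\Sigma(\mathfrak{m})\lesssim(\log|\mathfrak{m}|)^{1+\nu/\alpha_0}$, weaker than the stated bound for any $\nu\ge1$. The honest acknowledgement at the end that you have not carried out ``precisely this combinatorial counting'' is accurate: that counting is the proof, and without it the lemma is not established.
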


\begin{proof}
Versions of this lemma have already appeared in \cite{BDGL1, DGL2} (compare \cite[Lemma 6.1]{BDGL1}), with the following caveats. In previous versions of this statement, the definition of $A(\mathfrak{m})$ had $1 / (2 \eta_{\mathfrak{m},0}^{1/2})$ instead of the more complicated prefactor in front of the product; since either of the prefactors is polynomially bounded in $\lvert \mathfrak{m}\rvert$, this change is irrelevant. The estimate was stated and proved on $\mathbb{Z}^\nu$, but the proofs go through without modification when  terms are indexed in $\mathfrak{Z}(\tilde\omega)$. Finally, this estimate has been proved for $\alpha_0=1$, but the same proof goes through uniformly in $\alpha_0 \in [1/2,1]$,  with the obvious modification of replacing some exponential terms by subexponential terms.
\end{proof}

The constants $A(\mathfrak{m})$ appear as bounds for infinite product representations such as the one for $w'$. Through this, they lead to two-sided estimates between gap sizes $\gamma_{\mathfrak{m}}$ and heights $h_{\mathfrak{m}}$:

\begin{lemma}\label{lemma:productloglogm}
Under the assumptions of 
Theorem~\ref{thmtildeb},  $\gamma_\mathfrak{m}$ and $h_\mathfrak{m}$ are related by two-sided estimates
\begin{equation}\label{heightgapestimate}
\frac 12 A(\mathfrak{m})^{-1}  \gamma_\mathfrak{m}  \le h_\mathfrak{m} \le  2 A(\mathfrak{m}) \gamma_\mathfrak{m}.
\end{equation}
\end{lemma}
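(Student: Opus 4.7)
The plan is to express $h_\mathfrak{m}$ as an integral of $\lvert w'(E+i0)\rvert$ across the $\mathfrak{m}$-th gap, and then to bound this integral above and below via the product representation \eqref{thetaprimeproduct} so as to produce the structure of $A(\mathfrak{m})$ in \eqref{Amdefinition}. For the integral representation, I would note that on the upper edge of $[E_\mathfrak{m}^-,E_\mathfrak{m}^+]$ the rotation number $\Im w$ is constant, while $\Re w$ runs from $0$ at the endpoints down to $-h_\mathfrak{m}$ at the unique interior critical point $\lambda_\mathfrak{m}$; thus $w'(E+i0)$ is real on the gap and changes sign only at $\lambda_\mathfrak{m}$, giving
\[
2h_\mathfrak{m} = \int_{E_\mathfrak{m}^-}^{E_\mathfrak{m}^+} \lvert w'(E+i0)\rvert\, dE.
\]
Taking absolute values factor by factor in \eqref{thetaprimeproduct} and checking that the signs combine to make the radicand strictly positive on the gap yields
\[
\lvert w'(E+i0)\rvert = \frac{1}{2\sqrt{E-E_0}}\cdot\frac{\lvert\lambda_\mathfrak{m}-E\rvert}{\sqrt{(E-E_\mathfrak{m}^-)(E_\mathfrak{m}^+-E)}}\cdot\prod_{\mathfrak{m}'\neq\mathfrak{m}}\frac{\lvert\lambda_{\mathfrak{m}'}-E\rvert}{\sqrt{\lvert E_{\mathfrak{m}'}^--E\rvert\,\lvert E_{\mathfrak{m}'}^+-E\rvert}}.
\]

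The central step is a pointwise two-sided estimate on each factor in the infinite product. Fix $\mathfrak{m}'\neq\mathfrak{m}$ and $E\in[E_\mathfrak{m}^-,E_\mathfrak{m}^+]$, and set $d_\pm=\lvert E_{\mathfrak{m}'}^\pm-E\rvert$. Since the gaps are disjoint, $E$ lies outside $[E_{\mathfrak{m}'}^-,E_{\mathfrak{m}'}^+]$, so $\min(d_-,d_+)\ge\eta_{\mathfrak{m},\mathfrak{m}'}$ and $\max(d_-,d_+)-\min(d_-,d_+)=\gamma_{\mathfrak{m}'}$; also $\lambda_{\mathfrak{m}'}\in[E_{\mathfrak{m}'}^-,E_{\mathfrak{m}'}^+]$ forces $\min(d_-,d_+)\le \lvert\lambda_{\mathfrak{m}'}-E\rvert\le \max(d_-,d_+)$. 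Together with the identity $\sqrt{d_-d_+}=\sqrt{\min(d_-,d_+)\cdot\max(d_-,d_+)}$, these imply
\[
\bigl(1+\gamma_{\mathfrak{m}'}/\eta_{\mathfrak{m},\mathfrak{m}'}\bigr)^{-1/2}\le\frac{\lvert\lambda_{\mathfrak{m}'}-E\rvert}{\sqrt{d_-d_+}}\le\bigl(1+\gamma_{\mathfrak{m}'}/\eta_{\mathfrak{m},\mathfrak{m}'}\bigr)^{1/2}.
\]
This is the delicate algebraic point: a cruder triangle-inequality bound $\lvert\lambda_{\mathfrak{m}'}-E\rvert\le\min+\gamma_{\mathfrak{m}'}$ combined with $\sqrt{d_-d_+}\ge\min$ would give exponent $\pm 1$, which is too weak to match the definition of $A(\mathfrak{m})$. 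Multiplying over $\mathfrak{m}'\neq\mathfrak{m}$ reproduces the infinite product in \eqref{Amdefinition} or its reciprocal, while $\eta_{\mathfrak{m},0}\le E-E_0\le\eta_{\mathfrak{m},0}+\gamma_\mathfrak{m}$ on the gap is exactly paired with the two options in the outer maximum of \eqref{Amdefinition}.

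It remains to handle the integral coming from the $\mathfrak{m}$-th factor. Substituting $E=E_\mathfrak{m}^-+t\gamma_\mathfrak{m}$ and $\lambda_\mathfrak{m}=E_\mathfrak{m}^-+s\gamma_\mathfrak{m}$ with $s\in[0,1]$ gives
\[
\int_{E_\mathfrak{m}^-}^{E_\mathfrak{m}^+}\frac{\lvert\lambda_\mathfrak{m}-E\rvert}{\sqrt{(E-E_\mathfrak{m}^-)(E_\mathfrak{m}^+-E)}}\, dE = \gamma_\mathfrak{m}\int_0^1\frac{\lvert s-t\rvert}{\sqrt{t(1-t)}}\, dt,
\]
and a quick one-variable analysis shows the right-hand factor is minimized at $s=1/2$ with value $1$ and maximized at $s\in\{0,1\}$ with value $\pi/2$, yielding $\gamma_\mathfrak{m}\le\int\le(\pi/2)\gamma_\mathfrak{m}$. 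Combining the three sets of bounds produces \eqref{heightgapestimate}; the numerical slack ($\pi/4<1$ on the upper side, $1/2\le 1$ on the lower side) is absorbed into the stated constants $2$ and $1/2$, and the degenerate case $\gamma_\mathfrak{m}=0$ is vacuous since both sides then vanish. The main technical obstacle is the sign and branch bookkeeping for \eqref{thetaprimeproduct} on the gap together with the precise square-root algebra described above; once those are in place the assembly of the three bounds is routine.
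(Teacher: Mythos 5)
Your proposal is correct and follows essentially the same route as the paper: express $2h_\mathfrak{m}=\int_{E_\mathfrak{m}^-}^{E_\mathfrak{m}^+}\lvert w'\rvert\,dE$, bound each factor of the product representation of $w'$ by $(1+\gamma_{\mathfrak{m}'}/\eta_{\mathfrak{m},\mathfrak{m}'})^{\pm 1/2}$ (the paper phrases this as bounding the squared ratio by $(1+\gamma/\eta)^{\pm 1}$, which is the same thing), and then evaluate the remaining $\mathfrak{m}$-th integral by a substitution that normalizes the gap. The only cosmetic difference is the parametrization of that last integral — you use a linear substitution giving $\gamma_\mathfrak{m}\int_0^1 \lvert s-t\rvert/\sqrt{t(1-t)}\,dt$, the paper uses $E=E_\mathfrak{m}^-+\gamma_\mathfrak{m}\sin^2(t/2)$ giving $\tfrac 12\gamma_\mathfrak{m}\int_0^\pi\lvert\cos t-a\rvert\,dt$ — but these are the same integral after a change of variables, and both yield the two-sided bound $\gamma_\mathfrak{m}\le\int\le\tfrac\pi 2\gamma_\mathfrak{m}$.
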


\begin{proof}
For $\mathfrak{m}\in \mathfrak{Z}(\tilde \omega)_+$, the height $h_\mathfrak{m}$ can be expressed as
\[
h_\mathfrak{m} = \max_{z \in [E_\mathfrak{m}^-,E_\mathfrak{m}^+]}  \Re (- w(z)) = \Re (-w(\lambda_\mathfrak{m} ))
\]
since $\Re (-w)$ is increasing on $[E_\mathfrak{m}^-,\lambda_\mathfrak{m}]$ and decreasing for $[\lambda_\mathfrak{m},E_\mathfrak{m}^+]$. Since $w'$ is real-valued on $(E_\mathfrak{m}^-,E_\mathfrak{m}^+)$ and changes sign only at $\lambda_\mathfrak{m}$, it follows that
\[
2 h_\mathfrak{m} = \int_{E_\mathfrak{m}^-}^{E_\mathfrak{m}^+} \left\lvert w'(E)\right\rvert dE.
\]
For $E \in [E_\mathfrak{m}^-, E_\mathfrak{m}^+]$ and $\mathfrak{m}'\neq \mathfrak{m}$,
\[
\left( 1 + \frac{\gamma_{\mathfrak{m}'}}{\eta_{\mathfrak{m},\mathfrak{m}'}} \right)^{-1} \le \left\lvert \frac{(\lambda_{\mathfrak{m}'}-E)^2}{(E_{\mathfrak{m}'}^- - E)(E_{\mathfrak{m}'}^+ - E)} \right\rvert \le \left( 1 + \frac{\gamma_{\mathfrak{m}'}}{\eta_{\mathfrak{m},\mathfrak{m}'}} \right).
\]
Similarly, $\eta_{\mathfrak{m},0} \le \lvert  E_0 - E \rvert \le \eta_{\mathfrak{m},0} + \gamma_\mathfrak{m}$, and inserting these estimates into \eqref{thetaprimeproduct} we get precisely products of the form \eqref{Amdefinition}, obtaining
\[
A(\mathfrak{m})^{-1} \int_{E_\mathfrak{m}^-}^{E_\mathfrak{m}^+} \frac{\lvert \lambda_{\mathfrak{m}}-E \rvert }{\sqrt{ (E_{\mathfrak{m}}^- - E)(E_{\mathfrak{m}}^+ - E) }} dE \le 2 h_\mathfrak{m} \le A(\mathfrak{m}) \int_{E_\mathfrak{m}^-}^{E_\mathfrak{m}^+} \frac{\lvert \lambda_{\mathfrak{m}}-E \rvert }{\sqrt{ (E_{\mathfrak{m}}^- - E)(E_{\mathfrak{m}}^+ - E) }} dE.
\]
We rewrite the remaining integral by the change of variables $E = E_\mathfrak{m}^- + (E_\mathfrak{m}^+ - E_\mathfrak{m}^-) \sin^2(t/2)$, and obtain
\[
 \int_{E_\mathfrak{m}^-}^{E_\mathfrak{m}^+} \frac{\lvert \lambda_{\mathfrak{m}}-E \rvert }{\sqrt{ (E_{\mathfrak{m}}^- - E)(E_{\mathfrak{m}}^+ - E) }} dE = \tfrac 12 \gamma_\mathfrak{m} \int_0^{\pi} \lvert \cos t - a \rvert dt,
\]
where $a = \frac{2\lambda_\mathfrak{m} - E_\mathfrak{m}^+ - E_\mathfrak{m}^-}{E_\mathfrak{m}^+ - E_\mathfrak{m}^-}  \in (-1,1)$. The elementary estimates
$2 \le \int_0^\pi \lvert \cos t - a \rvert dt \le \pi$
 complete the proof.
\end{proof}

\begin{proof}[Proof of Proposition~\ref{proptwosidedhm}]
By \eqref{productAmbound1}, we can choose $\ve_1>0$ small enough so that for $0 < \ve < \ve_1$, we have
\[
A(\mathfrak{m}) \le \frac 1{4D \ve^{1/2}} \exp\left(\frac{\kappa_0}{20} \lvert \mathfrak{m} \rvert^{1/2} \right).
\]
(i) Combining \eqref{heightgapestimate} and Theorem~\ref{thmtildeb}(i),
\[
h_\mathfrak{m} \le 4 A(\mathfrak{m}) D \ve \exp\left(-\frac{\kappa_0}4 \lvert \mathfrak{m}\rvert^{\alpha_0} \right)  \le \ve^{1/2} \exp\left(- \frac{\kappa_0}5 \lvert \mathfrak{m}\rvert^{\alpha_0} \right).
\]
(ii) Using \eqref{heightgapestimate}, we obtain
\[
\gamma_\mathfrak{m} \le 2 A(\mathfrak{m}) \ve' \exp\left(- \kappa_0''  \lvert \mathfrak{m}\rvert^{\alpha_0'} \right)
 \le \ve'' \exp\left(-\kappa'_0 \lvert \mathfrak{m}\rvert^{\alpha'_0} \right)
\]
where $\kappa'_0 = \kappa''_0 - \frac{\kappa_0}{20} \ge 4 \kappa_0$ and $\ve'' = \tfrac 12 \ve_1^{-1/2}\ve'$. Choosing $\ve_2 = \min (\ve_1^{3/2}, 2 \ve_1^{1/2} \ve^{(0)})$, we ensure that $\ve'' < \ve^{(0)}$, so we can apply Theorem~\ref{thmtildeb}(ii) to obtain $\tilde V \in \mathcal{P}(\tilde\omega, \sqrt{2\ve''},\frac{\kappa'_0}2,\alpha_0')$, which implies $\tilde V \in \mathcal{P}(\tilde\omega, (\ve')^{1/3}, \frac{\kappa''_0}3, \alpha'_0)$.
\end{proof}

The second part of the proof of Theorem~\ref{thmtildef} consists of a continuity argument used to remove the a priori subexponential decay assumption from Proposition~\ref{proptwosidedhm}(2). We need to invoke the following fundamental results from the inverse spectral theory of periodic Schr\"odinger operators; see \cite{MO,PT,Tr,GT,GT2}. Let $Q$ be a real $T$-periodic potential, $Q \in L^2([0,T])$,  given by
\begin{equation} \label{eq:8potential}
Q(x) = \sum_{n \in \mathbb{Z}} d(n) e^{\frac{2\pi i nx}{T}}.
\end{equation}
Consider the spectrum $\cS = \sigma(H_Q)$. The isospectral torus $\mathcal{R}(\cS)$ consists of $T$-periodic potentials $q$ such that $\sigma(H_q) = \cS$. We recall the following facts:
\begin{enumerate}[(a)]
\item The gaps $(E^-_n,E^+_n)$ in the spectrum obey
\begin{equation} \label{eq:Hillgapsexpdecayl2}
\sum_{n=1}^\infty (E_n^+ - E_n^-)^2 <+\infty
\end{equation}
and the heights $h_n$ obey
\[
\sum_{n=1}^\infty n^2 h_n^2 < +\infty.
\]

\item Given a sequence $h_n\ge 0$, $n=1,\dots$ with $\sum_n n^2 h_n^2<+\infty$, there exists a unique up to a shift sequence of intervals $(E^-_n,E^+_n)$ such that these intervals are the gaps in the spectrum for some $q\in L^2(0,T)$ and the $h_n$'s are the heights of the slits in the associated comb domain.

\item Let $\mu_n(q)$ be the Dirichlet eigenvalues and $\nu_n(q)$ be the Neumann eigenvalues for $q$. Then
\[
E^-_n\le \mu_n(q),\nu_n(q)\le E^+_n.
\]
Furthermore, $q$ is even if and only if
\[
\mu_n(q)\in \{E^-_n,E^+_n\},\quad \forall n \in \bbN,
\]
and if and only if
\[
\nu_n(q)\in \{E^-_n,E^+_n\},\quad \forall n \in \bbN.
\]

\item Denote by $\mathcal{E}_0$ the set of all real, $T$-periodic even $L^2$ functions with zero average. Given $q\in \mathcal{E}_0$
consider the sequence $(\mu_n(q)-\nu_n(q))_{n=1}^\infty$. Since $\mu_n(q)-\nu_n(q)=\pm (E^+_n-E^-_n)$, the values $\mu_n(q)-\nu_n(q)$ are called
the signed gap-lengths. Similarly, the quantities
\[
\rho_n(q) = \sgn(\mu_n(q) - \nu_n(q)) h_n(q)
\]
are called the signed heights. The map $\Upsilon: q\to (\rho_n(q))_{n=1}^\infty \in \ell_1^2$ is a real analytic diffeomorphism from $\mathcal{E}_0$ onto $\ell_1^2$.

\item Assume $E_n^- < E_n^+$ and denote $G_{n,\sigma} = \{ (\lambda,\sigma) \mid E_n^- <  \lambda < E_n^+ \}$ for $\sigma = \pm$ and 
\begin{equation}\label{eq:8toruspm-2}
\mathcal{C}_n = G_{n,-} \cup G_{n,+} \cup \{E_n^-,E_n^+\}.
\end{equation}
The set $\mathcal{C}_n$ has a natural smooth structure which makes it diffeomorphic to the circle $\mathbb{T} = \{ e^{i \theta} : 0 \le \theta \le 2 \pi \}$. For each $q$ one can define $\sigma_n(q)\in \{+,-\}$ so that the map $q\to ((\mu_n(q),\sigma_n(q)))\in \prod_n \mathcal{C}_n$
is a diffeomorphism.

\item By the inequality $\gamma_n \le \max(2\pi n h_n, h_n^2)$, if
\begin{equation}\label{hnexpdecay}
h_n \le A_1 \exp\left( -\eta_1 \frac{\lvert n \rvert}T \right),
\end{equation}
then
\[
\gamma_n \le A(T,A_1,\eta_1) \exp\left( -\eta(T,A_1,\eta_1) \frac{\lvert n \rvert}T \right).
\]

\item Assume that for all $n$,
\begin{equation} \label{eq:analyticHillgapsexpdecay}
\gamma_n \le A\exp \left( -\eta \frac{|n|}{T} \right).
\end{equation}
Then
\begin{equation} \label{eq:8coeffexpdecayQ}
|d(n)| \le C(T,A,\eta) \exp \left( -c(T,A,\eta) \frac{|n|}{T} \right).
\end{equation}
\end{enumerate}

Besides the quotient metric introduced before, $\mathfrak{Z} (\tilde \omega)$ can also be equipped with the metric pulled back from $\mathbb{R}$,
\[
\lvert \mathfrak{m} - \mathfrak{l} \rvert_{\mathbb{R}} := \min \{ \lvert m\tilde \omega - l\tilde\omega \rvert : m \in \mathfrak{m}, l\in\mathfrak{l} \}.
\]
As it has been observed in \cite{DGL1}, there are constants $c_{\tilde\omega}$, $C_{\tilde\omega}$ such that
\begin{equation}\label{equivalenceofmetrics}
c_{\tilde \omega} \lvert\mathfrak{m}\rvert_\mathbb{R} \le \lvert\mathfrak{m}\rvert \le C_{\tilde \omega} \lvert\mathfrak{m}\rvert_\mathbb{R}.
\end{equation}
Combining (f) and (g) with \eqref{equivalenceofmetrics}, we conclude that \eqref{hnexpdecay} implies
\begin{equation} \label{eq:8coeffexpdecayQ1}
|d(n(\mathfrak{m}))| \le C(T,A,\eta) \exp \left( -c_1(T,A,\eta) |\mathfrak{m}| \right).
\end{equation}

\begin{proof}[Proof of Theorem~\ref{thmtildef}] (i) follows from Prop.~\ref{proptwosidedhm} with $\alpha_0=1$.

(ii) First we prove the statement for even potentials.
Let $D:\ell_1^2(\mathfrak{Z}(\tilde\omega)_+)\to \mathcal{E}_0$ be the inverse of $\Upsilon$.
Given $\rho=(\rho_n)\in\ell_1^2(\mathbb{Z}_+)$,  denote by $d_\mathfrak{m}(\rho)$ the Fourier coefficients of $D(\rho)$.
Let
\[
\mathcal{S}(\ve,\kappa)=\{\rho=(\rho_\mathfrak{m})_{\mathfrak{m}\in\mathfrak{Z}(\tilde\omega)_+}:|\rho_\mathfrak{m}|\le \ve\exp(-\kappa |\mathfrak{m}|)\}.
\]
Given $1/2\le \alpha<1$ set
\[
\mathcal{G}(\ve,\kappa,\alpha)=\{\rho\in \mathcal{S}(\ve,\kappa): |d_\mathfrak{m}(\rho)|\le \ve^{1/3} \exp(-\tfrac{\kappa}{3} |m|^{\alpha})\}.
\]
The set $\mathcal{G}(\ve,\kappa,\alpha)$ is nonempty, since $D(0)=0$, and it is obviously closed since each $d_\mathfrak{m}$ is continuous. We claim that the set $\mathcal{G}(\ve,\kappa,\alpha)$ is also relatively open in $\mathcal{S}(\ve,\kappa)$ in the $\ell_1^2$ metric.
Indeed, note first of all that due to $(f)$ we have
\begin{equation} \label{eq:8coeffexpdecayQ2}
|d_\mathfrak{m}| \le C(T,\ve,\kappa) \exp \left( -c(T,\ve,\kappa) |\mathfrak{m}| \right)
\end{equation}
for any $\rho\in \mathcal{S}(\ve,\kappa)$. In particular, one can find $M=M(T,\ve,\kappa,\alpha)$ large enough so that for
$|\mathfrak{m}| > M$, we have
\begin{equation} \label{eq:8coeffexpdecayQ13}
|d_\mathfrak{m}| < \ve^{1/3} \exp(-\tfrac{\kappa}{3} |\mathfrak{m}|^{\alpha}).
\end{equation}
Next, note that since the map $D$ is continuous
from $\ell^2(\mathfrak{Z}(\tilde\omega)_+)$ to $L^2(0,T)$, the map $\rho\to (d_\mathfrak{m}(\rho))_{\mathfrak{m}\in \mathfrak{Z}(\tilde\omega)}$
is also continuous from $\ell^2(\mathfrak{Z}(\tilde\omega)_+)$ to $\ell^2(\mathfrak{Z}(\tilde\omega))$. Let $\rho^\zero=(\rho^\zero_\mathfrak{m})\in \mathcal{G}(\ve,\kappa,\alpha)$. Find $\delta>0$ such that 
\begin{equation} \label{eq:8coeffexpdecayQ15}
\|D(\rho)-D(\rho_0)\|<\ve^{1/3} \exp(-\kappa M^\alpha)
\end{equation}
for any $\|\rho-\rho_0\|<\delta$. Let $\|\rho-\rho_0\|<\delta$. Then, in particular, we have
\begin{equation} \label{eq:8coeffexpdecayQ16}
|d_\mathfrak{m}(\rho)| < \ve^{1/3} \exp(-\tfrac{\kappa}{3} |\mathfrak{m}|^{\alpha})+\ve^{1/3} \exp(-\kappa M^\alpha)<2\ve^{1/3} \exp(-\tfrac{\kappa}{3} |\mathfrak{m}|^{\alpha})
\end{equation}
for any $|\mathfrak{m}|\le M$. Consider $q=D(\rho)$, i.e.
\begin{equation}\label{eq:1potentialsVtilDmap}
q(x) = \sum_{m \in \mathfrak{Z}(\tilde\omega)} d_\mathfrak{m}(\rho) e^{2 \pi i x \mathfrak{m} \tilde \omega}.
\end{equation}
Proposition~\ref{proptwosidedhm} applies to $q$ in the role of $\tilde V$, as long as $2\ve^{1/3} < \ve_1(a_0,b_0,\kappa_0)$ and $\ve < \ve_2(a_0,b_0,\kappa_0)$. Thus,
\begin{equation} \label{eq:8coeffexpdecayQ17}
|d_\mathfrak{m}(\rho)| < \ve^{1/3} \exp(-\tfrac{\kappa}{3} |\mathfrak{m}|^{\alpha}),
\end{equation}
i.e. $\rho\in \mathcal{G}(\ve,\kappa,\alpha)$. 

By connectedness, it follows that $\mathcal{G}(\ve,\kappa,\alpha)=\mathcal{S}(\ve,\kappa)$, i.e. for any $\rho
\in \mathcal{S}(\ve,\kappa)$ one has
\begin{equation} \label{eq:8coeffexpdecayQ18}
|d_\mathfrak{m}(\rho)| < \ve^{1/3} \exp(-\tfrac{\kappa}{3} |\mathfrak{m}|^{\alpha}),
\end{equation}
That completes the proof for even potentials.  To prove the statement for arbitrary $\tilde V$, we first recall that there exists an even potential $q_0$ isospectral with $\tilde V$. Since we have proved the statement for even potentials, we have
\begin{equation}\label{eq:1potDmap1}
q_0(x) = \sum_{m \in \mathfrak{Z}(\tilde\omega)} d^\zero_\mathfrak{m} e^{2 \pi i x \mathfrak{m} \tilde \omega},
\end{equation}
\begin{equation} \label{eq:8coeffexpdecayQ20}
|d^\zero_\mathfrak{m}| < \ve^{1/3} \exp \left(-\tfrac{\kappa}{3} |\mathfrak{m}|^{\alpha} \right).
\end{equation}
Now we employ the map $q\to ((\mu_n(q),\sigma_n(q)))\in \prod_n \mathcal{C}_n$ from $(e)$. By another open set-closed set argument as above, it follows that the statement holds for any potential $q$ isospectral with $q_0$.

We have therefore shown that
\[
\lvert c(\mathfrak{m}) \rvert \le \ve^{1/3} \exp\left( - \tfrac \kappa 3 \lvert \mathfrak{m}\rvert^\alpha \right)
\]
for any $\alpha \in [1/2,1)$. Taking the limit $\alpha \to 1$, we conclude
\begin{equation}\label{eq:FCest2}
|c(\mathfrak{m})| \le \ve^{1/3} \exp\left(-\tfrac{\kappa}{3} |\mathfrak{m}| \right)
\end{equation}
which completes the proof.
\end{proof}

\section{Stability of Marchenko--Ostrovskii Maps} \label{sectionGeometric}

In this section, we use geometric function theory to study Marchenko--Ostrovskii maps up to the boundary. We rely on the fact that the domains for the periodic approximants satisfy some uniform estimates, which will transfer also to the limit.

We begin by recalling some general facts \cite[Chapter 4]{Po}, which we rewrite with respect to the domain $\bbC_+$ and the boundary point $\infty$ instead of $\bbD$; these rewritings are trivially obtained by pre- and post-composing with the Cayley transform as needed. Let $g:\bbC_+ \to \bbC_+$ be analytic and $\Gamma$ a Jordan arc in $\bbC_+ \cup \{\infty\}$ with endpoint $\infty$. If 
\[
\lim_{\substack{z \to \infty \\ z \in \Gamma}} g(z) = \infty,
\]
then $g$ has the angular limit $\infty$ at $\infty$, i.e., for any $\alpha > 0$,
\begin{equation}\label{eqnAngularLimit}
\lim_{\substack{z \to \infty \\  \arg z \in (\alpha,\pi - \alpha)}} g(z) = \infty.
\end{equation}
By the Julia--Wolff lemma \cite[Prop. 4.13]{Po},  \eqref{eqnAngularLimit} implies that $g$ has an angular derivative at $\infty$, in the sense that for every $\alpha > 0$, the limit
\[
g'(\infty) := \lim_{\substack{z \to \infty \\  \arg z \in (\alpha,\pi - \alpha)}} \frac {z}{g(z)}
\]
exists and $g'(\infty) \in (0,\infty]$.

It is convenient to transform a Marchenko--Ostrovskii map into the form
\begin{equation}\label{eqnMOtransform}
\Theta(z) = -i w(  E_0 + z^2).
\end{equation}
The map $\Theta$ is a conformal bijection from $\bbC_+$ to the Marchenko--Ostrovskii domain $\Omega$, and the asymptotics \eqref{eqn2term} implies that $\Theta$ has a finite angular derivative at $\infty$, namely,
\[
\Theta'(\infty) =1.
\]
In geometric function theory it is more convenient to use an internal normalization, and our work  will switch between those two. 
The following corollary of the Schwarz--Pick theorem is useful for comparing angular derivatives at $\infty$.

\begin{lemma}\label{comparisonSP}
Let $g_1, g_2: \bbC_+ \to \bbC_+$ be analytic, injective, and have the symmetry
\[
g_j(z) = - \ol{ g_j( - \ol z)}.
\]
If $g_1, g_2$ satisfy \eqref{eqnAngularLimit} and $\{iy \mid 1 \le y < \infty\} \subset g_1(\bbC_+) \subset g_2(\bbC_+)$, then
\begin{equation}\label{eqschwarzpick}
\frac{1} {g_2'(\infty)} \ge \frac{g_1^{-1}(i)}{g_2^{-1}(i)} \frac{1} {g_1'(\infty)}.
\end{equation}
\end{lemma}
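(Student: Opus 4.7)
\medskip

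\noindent\textbf{Proof proposal.} The plan is to pass to the composition $h := g_2^{-1} \circ g_1 : \bbC_+ \to \bbC_+$, which is well-defined and analytic because $g_1(\bbC_+) \subset g_2(\bbC_+)$ and $g_2$ is injective. The strategy is to check that $h$ satisfies the hypothesis of the Julia--Wolff lemma at $\infty$, compute $h'(\infty)$ by a chain-rule argument using $g_1 = g_2\circ h$, and then evaluate Julia's inequality at the point $z = g_1^{-1}(i)$.

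First I would record a symmetry observation. The relation $g_j(z) = -\ol{g_j(-\ol z)}$ means each $g_j$ preserves the positive imaginary axis $i\bbR_{>0}$, and since $g_j$ is injective analytic there, the map $y \mapsto -ig_j(iy)$ is a monotone analytic function $\bbR_{>0} \to \bbR_{>0}$. In particular $g_j^{-1}(i)$, whenever it exists, lies on $i\bbR_{>0}$, so the ratio $g_1^{-1}(i)/g_2^{-1}(i)$ in the statement is naturally a positive real number. Since $\{iy : y\ge 1\}\subset g_1(\bbC_+)\subset g_2(\bbC_+)$, both preimages are defined, and $h$ inherits the same symmetry, so it too sends $i\bbR_{>0}$ monotonically into itself.

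Next I would verify the angular-limit hypothesis for $h$. By \eqref{eqnAngularLimit} applied to $g_1$, $g_1(iy)\to\infty$ as $y\to\infty$; write $g_1(iy) = i\eta_1(y)$ with $\eta_1(y)\to\infty$. Likewise $g_2$ maps $i\bbR_{>0}$ monotonically onto an interval containing $[i,i\infty)$, and $g_2(iy)\to\infty$ as $y\to\infty$, so $g_2^{-1}(i\eta)\to\infty$ as $\eta\to\infty$. Composing gives $h(iy)\to\infty$, which is the hypothesis of Julia--Wolff, providing both the angular derivative $h'(\infty)\in(0,\infty]$ and Julia's inequality
\[
\Im h(z) \ge \frac{\Im z}{h'(\infty)}, \qquad z\in\bbC_+.
\]
To identify $h'(\infty)$, I would factor $z/g_1(z) = (z/h(z))\cdot(h(z)/g_2(h(z)))$; since $h(z)\to\infty$ non-tangentially when $z$ does, taking non-tangential limits yields $g_1'(\infty) = h'(\infty)\,g_2'(\infty)$, i.e.\ $h'(\infty) = g_1'(\infty)/g_2'(\infty)$.

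Finally, evaluate Julia's inequality at $z = g_1^{-1}(i)$, for which $h(z) = g_2^{-1}(i)$. Writing $g_j^{-1}(i) = i y_j$ with $y_j > 0$, the inequality becomes $y_2 \ge y_1/h'(\infty) = y_1 g_2'(\infty)/g_1'(\infty)$, which rearranges to \eqref{eqschwarzpick}. The only delicate step is confirming the angular-limit hypothesis for the composition $h$ (together with the fact that $h'(\infty)<\infty$ is not required for the inequality, since $1/h'(\infty)$ appears on the right side and the inequality is trivial if $h'(\infty) = \infty$); once that is in place, the derivation is a short chain-rule plus boundary Schwarz--Pick computation.
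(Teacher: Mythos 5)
Your proof is correct, and it takes a genuinely different (though closely related) route from the paper's. Both of you start from the composition $h = g_2^{-1}\circ g_1$, but from there you invoke Julia's inequality $\Im h(z) \ge \Im z / h'(\infty)$ at the single point $z = g_1^{-1}(i)$ and combine it with the chain rule $g_1'(\infty) = h'(\infty)\, g_2'(\infty)$ for angular derivatives. The paper instead applies the Schwarz--Pick inequality in its two-point form to $h$, passes to the infinitesimal version along the positive imaginary axis (which gives $\bigl|(g_2^{-1})'(iy)\bigr|/\Im g_2^{-1}(iy) \le \bigl|(g_1^{-1})'(iy)\bigr|/\Im g_1^{-1}(iy)$), and then integrates this logarithmic-derivative inequality from $y=1$ to $y\to\infty$, after which the endpoints give exactly $g_1^{-1}(i),g_2^{-1}(i)$ and the $y\to\infty$ limit gives the angular derivatives. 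Morally the two arguments coincide --- Julia's inequality is itself obtained by integrating the infinitesimal Schwarz--Pick along a radius --- but your version offloads the integration step to the (unstated but standard) Julia inequality, while the paper keeps the argument self-contained at the cost of writing out the integration. Your approach does additionally require the chain rule for angular derivatives, which you justify by factoring $z/g_1(z) = (z/h(z))\cdot(h(z)/g_2(h(z)))$; here it is worth noting that the cleanest way to see this limit is to restrict to $z=iy$ on the positive imaginary axis (as the paper does), since by the symmetry all three factors are then real and positive, so the limit product is valid even when some angular derivatives are infinite --- your aside about non-tangential approach regions when $h'(\infty)<\infty$ is more than is needed. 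The paper's integration along the imaginary axis bypasses this issue entirely, which is the one small advantage of its route.
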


\begin{proof}
By the Schwarz--Pick theorem, analytic maps $g:\mathbb{C}_+ \to \mathbb{C}_+$ don't increase distances in the Poincar\'e metric, i.e.
\begin{equation}\label{SchwarzPick}
\left\lvert \frac{g(z_1) - g(z_2)}{\overline{g(z_1)} - g(z_2)}  \right\rvert \le \left\lvert \frac{z_1 - z_2}{\overline{z_1} - z_2}  \right\rvert, 
\quad z_1, z_2 \in \mathbb{C}_+.
\end{equation}
Applying \eqref{SchwarzPick} to $g = g_2^{-1} \circ g_1: \mathbb{C}_+ \to \mathbb{C}_+$ and taking $z_j = g_1^{-1}(w_j)$ implies
\[
\left\lvert \frac{g_2^{-1}(w_1) - g_2^{-1}(w_2)}{\overline{g_2^{-1}(w_1)} - g_2^{-1}(w_2)}  \right\rvert \le \left\lvert \frac{g_1^{-1}(w_1) - g_1^{-1}(w_2)}{\overline{g_1^{-1}(w_1)} - g_1^{-1}(w_2)}  \right\rvert, \quad w_1, w_2 \in g_1(\bbC_+).
\]
Dividing by $w_1 - w_2$ and letting $w_1 \to w_2 =iy$, we obtain
\begin{equation}\label{SchwarzPick2}
\frac{\lvert (g_2^{-1})'(iy) \rvert}{\Im (g_2^{-1})(iy)} \le \frac{\lvert (g_1^{-1})'(iy) \rvert}{\Im (g_1^{-1})(iy)}, \; y \ge 1.
\end{equation}
Since $g_j^{-1}$ are purely imaginary along the imaginary axis, integrating along the segment $[i,iy]$ and letting $y \to +\infty$ gives \eqref{eqschwarzpick}.
\end{proof}

\begin{lemma}\label{lemmaconformalmaps}
Consider a simply connected domain $\Omega \subset \mathbb{C}_+$ symmetric under reflection with respect to the imaginary axis, $z\leftrightarrow - \bar z$. Assume that there exists $\epsilon \in (0,1)$ such that $\{ z\in \mathbb{C} \mid \Im z > \epsilon\} \subset \Omega$. Let $f : \mathbb{C}_+ \to \Omega$ be the conformal bijection satisfying the normalization
\begin{equation}\label{falphanormalization}
f(i)=i, \quad f'(i) > 0.
\end{equation}
Then,
\begin{enumerate}[(i)]
\item The map $f$ has the symmetry
\begin{equation}\label{falphasymmetry}
f(z) =  -  \overline{ f(-\bar z)}.
\end{equation}
\item The map $iy \mapsto f(iy)$ is a strictly increasing bijection $(0,i\infty) \to (0,i\infty)\cap \Omega$.
\item The map $f$ has an angular limit at $\infty$: for all $\alpha > 0$,
\begin{equation}\label{falphalimits}
\lim_{\substack{z \to \infty \\ \arg z \in (\alpha,\pi - \alpha)}} f(z) = \infty.
\end{equation}
\item The angular derivative $f'(\infty)$  exists and
\begin{equation}\label{scalingconstantbound} 
1 \le f'(\infty) \le \frac 1{1-\epsilon}.
\end{equation}
\item The map $\Theta$ given by
\begin{equation}\label{scalingMO}
\Theta (z) = f(f'(\infty) z)
\end{equation}
is the unique conformal bijection from $\mathbb{C}_+$ onto $\Omega$ obeying the normalization
\begin{equation}\label{normalizationconformal}
\Theta(z) = - \overline{\Theta(-\bar z)}, \quad \lim_{y\to\infty} \frac{\Theta(iy)}{iy} = 1.
\end{equation}
\end{enumerate}
\end{lemma}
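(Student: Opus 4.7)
The plan is to dispatch (i)--(iii) by tracking the imaginary axis, derive (iv) from two applications of Lemma~\ref{comparisonSP}, and conclude (v) via rescaling and a short Möbius classification.

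For (i), I would set $\tilde f(z) := -\overline{f(-\bar z)}$; as the composition $\sigma \circ f \circ \sigma$ with the antiholomorphic involution $\sigma(z) = -\bar z$, it is holomorphic on $\bbC_+$, and the symmetry of $\Omega$ makes it a conformal bijection $\bbC_+ \to \Omega$. A Taylor expansion at $z = i$ yields $\tilde f(i) = i$ and $\tilde f'(i) = f'(i) > 0$, so uniqueness of the normalized Riemann map forces $\tilde f = f$. Evaluating (i) at $z = iy$ shows $f(iy) \in (0, i\infty)$, and running the same symmetry through $f^{-1}$ gives $f^{-1}((0, i\infty) \cap \Omega) \subset (0, i\infty)$, so $f((0, i\infty)) = (0, i\infty) \cap \Omega$. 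Writing $f(iy) = i\phi(y)$, continuity and injectivity of $\phi:(0,\infty) \to (0,\infty)$ make it strictly monotone, and $\phi'(1) = f'(i) > 0$ forces it to be increasing, proving (ii). Since $(i\epsilon, i\infty) \subset (0, i\infty) \cap \Omega$, monotonicity yields $\lim_{y \to \infty} f(iy) = \infty$, so Lindelof's principle applied to the Jordan arc $\Gamma = (0, i\infty)$ produces the angular limit \eqref{falphalimits}, which is (iii).

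For (iv), the Julia--Wolff lemma gives $f'(\infty) \in (0, \infty]$, and Lemma~\ref{comparisonSP} supplies both bounds. For $f'(\infty) \ge 1$ I would take $g_1 = f$ and $g_2 = \mathrm{id}_{\bbC_+}$: the required symmetry holds for both, $\Omega \subset \bbC_+$ and $\{iy : y \ge 1\} \subset \Omega$ (since $\epsilon < 1$), and $g_1^{-1}(i) = g_2^{-1}(i) = i$ with $g_2'(\infty) = 1$, so the lemma delivers $1 \ge 1/f'(\infty)$. For $f'(\infty) \le 1/(1 - \epsilon)$ I would take $g_1(z) = z + i\epsilon$ and $g_2 = f$: then $g_1(\bbC_+) = \{\Im z > \epsilon\} \subset \Omega$, $g_1'(\infty) = 1$, $g_1^{-1}(i) = i(1 - \epsilon)$, $g_2^{-1}(i) = i$, and the lemma yields $1/f'(\infty) \ge 1 - \epsilon$.

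For (v), $\Theta(z) := f(f'(\infty) z)$ is a conformal bijection $\bbC_+ \to \Omega$ inheriting the symmetry from (i) (as $f'(\infty) > 0$ is real), and direct computation gives $\Theta(iy)/(iy) = f'(\infty) \cdot f(i f'(\infty) y) / (i f'(\infty) y) \to 1$. For uniqueness, any two such $\Theta_1, \Theta_2$ differ by a Möbius automorphism $w = (az + b)/(cz + d)$ of $\bbC_+$ satisfying the required symmetry; substituting and matching coefficients forces $ac = bd = 0$, leaving only the dilations $z \mapsto \lambda z$ ($\lambda > 0$) and the inversions $z \mapsto -\mu / z$ ($\mu > 0$); the inversions send $i\infty$ to $0$ and violate the limit normalization, while among dilations only $\lambda = 1$ preserves it. The main obstacle is (iv): one must carefully verify each hypothesis of Lemma~\ref{comparisonSP} for both choices of $(g_1, g_2)$, with the bound $\epsilon < 1$ playing an essential role.
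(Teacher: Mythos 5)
Your proof is correct and follows essentially the same route as the paper: symmetry of $f$ via uniqueness of the normalized Riemann map, tracking the imaginary axis, Lindel\"of's theorem for the angular limit, Julia--Wolff for the angular derivative, and two applications of Lemma~\ref{comparisonSP} with the same choices of $(g_1,g_2)$. The only real divergence is in the uniqueness argument of (v): you classify M\"obius automorphisms of $\bbC_+$ satisfying the reflection symmetry and then eliminate all but the identity using the normalization at $\infty$, whereas the paper argues more directly that any conformal bijection $\Theta$ with the symmetry maps $(0,i\infty)$ bijectively onto $(0,i\infty)\cap\Omega$, hence $\Theta(\cdot/C)=f$ for the unique $C>0$ with $\Theta(i/C)=i$, and comparing angular derivatives at $\infty$ forces $C=f'(\infty)$. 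Both are valid; the paper's is a touch more economical since it avoids the coefficient matching.
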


\begin{proof}
(i) Since $\Omega$ is symmetric with respect to the imaginary axis, $-\overline{f(-\bar z)}$ is also a conformal map of $\mathbb{C}_+$ to $\Omega$, and it satisfies the same normalization \eqref{falphanormalization}, so they are equal.

(ii) By the symmetry \eqref{falphasymmetry}, if $f(z) = it$ for some $t\in \mathbb{R}$, then also $f(-\bar z) = it$; since $f$ is injective, this implies $z = -\bar z$. Conversely, $z=-\bar z$ implies $f(z) \in (0,i\infty)$ by the same symmetry  \eqref{falphasymmetry}. Thus, $f$ maps $(0,i\infty)$ to $(0,i\infty) \cap \Omega$ bijectively. By continuity, $y\mapsto -if(iy)$ is strictly increasing or strictly decreasing; since $f'(i)>0$, it is strictly increasing. Thus, 
 $\lim_{y\to \infty} f(iy) = \infty$. 

(iii) It follows from (ii) that $f(z)$ has a radial limit at  $\infty$ along the imaginary axis. Thus, $f$ has an angular limit at $\infty$. 

(iv) The angular derivative $f'(\infty)$ exists by the Julia--Wolff lemma. Applying Lemma~\ref{comparisonSP} to functions $f$ and $z \mapsto z$, we obtain $f'(\infty) \ge 1$. Similarly, applying this to $z \mapsto z+ i \epsilon$ and $f$, we obtain $f'(\infty) \le 1 / (1-\epsilon)$.

(v) For any such conformal map $\Theta$, symmetry dictates that there is a unique $C>0$ such that $\Theta(i/C) = i$ and that $\Theta'(i/C) >0$. Thus, $\Theta(z/C) =f(z)$. Comparing derivatives at $\infty$, we see $\Theta'(\infty) = C^{-1} f'(\infty)$, so $C= f'(\infty)$.
\end{proof}

We now specialize to a family of comb domains which satisfies some uniform decay conditions on the slit heights and uniform upper bounds and Diophantine conditions for the frequency vectors. 

\begin{lemma}\label{propequicontinuity} Let $\{\Omega_\alpha\}_{\alpha \in \cA}$ be a family of comb domains of the form
\[
\Omega_\alpha = \mathbb{C}_+ \setminus \bigcup_{\mathfrak{m} \in\mathfrak{Z}(\omega_\alpha)}  
( \mathfrak{m}\omega_\alpha,  \mathfrak{m}\omega_\alpha + i h_{ \mathfrak{m}, \alpha} ]
\]
where $\omega_\alpha \in \mathbb{R}^\nu$ and $h_{\mathfrak{m},\alpha} \ge 0$ for $\mathfrak{m} \in \mathfrak{Z}(\omega_\alpha)$. Assume that the following hold, for some constants $r< \infty$, $a_0 \in (0,1)$, $b_0 \in (\nu,\infty)$, $\ve > 0$, $\kappa>0$ independent of $\alpha \in \cA$:
\begin{enumerate}[(i)]
\item $\lvert \omega_\alpha \rvert < r$ and $\omega_\alpha$ obey the Diophantine condition
\begin{equation}\label{Diophantineomega}
\lvert \mathfrak{m} \omega_\alpha \rvert \ge a_0 \lvert \mathfrak{m} \rvert^{-b_0}, \quad \forall \mathfrak{m}\in \mathfrak{Z}(\omega_\alpha) \setminus\{0\}.
\end{equation}
\item $h_{0,\alpha}=0$, $h_{\mathfrak{m},\alpha} = h_{-\mathfrak{m},\alpha} \ge 0$, and
\begin{equation}\label{hmexpdecay}
 0 \le h_{\mathfrak{m},\alpha} \le \ve e^{-\kappa \lvert \mathfrak{m} \rvert}, \qquad \forall \mathfrak{m} \in \mathfrak{Z}(\omega_\alpha).
\end{equation}
\end{enumerate}
Then, for each $\alpha \in \cA$, there is a unique conformal map $\Theta_\alpha:\mathbb{C}_+ \to \Omega_\alpha$ with the normalization \eqref{normalizationconformal}, 
and the family of functions $\{\Theta_\alpha\}_{\alpha\in \cA}$ is equicontinuous on $\ol{ \bbC_+}$.
\end{lemma}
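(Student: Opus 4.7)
Each $\Omega_\alpha$ is a simply connected subdomain of $\mathbb{C}_+$ (the complement of a locally finite family of disjoint closed vertical slits with distinct bases, distinctness ensured by the definition of $\mathfrak{Z}(\omega_\alpha)$), symmetric under $z\mapsto-\bar z$ since $h_{-\mathfrak{m},\alpha}=h_{\mathfrak{m},\alpha}$ and the base set $\{\mathfrak{m}\omega_\alpha\}$ is preserved by negation, and contains $\{\Im z>\varepsilon\}$ by the uniform height bound in hypothesis (ii). I apply Lemma~\ref{lemmaconformalmaps} to each $\Omega_\alpha$ to produce the unique conformal $\Theta_\alpha\colon\mathbb{C}_+\to\Omega_\alpha$ satisfying \eqref{normalizationconformal}, together with the uniform angular derivative estimate $1\le\Theta_\alpha'(\infty)\le 1/(1-\varepsilon)$ from part~(iv) of that lemma.

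\textbf{Normal family and boundary extension.} For equicontinuity, the strategy is to combine interior normality with a uniform boundary modulus of continuity. Normality of $\{\Theta_\alpha\}$ on compact subsets of $\mathbb{C}_+$ is immediate from Montel's theorem, since each $\Theta_\alpha$ maps into the fixed half-plane $\mathbb{C}_+$. Each $\Omega_\alpha$ has locally connected boundary in the Carath\'eodory sense---each slit contributes two prime ends, and the real line is locally connected between slit bases---so by Carath\'eodory's extension theorem $\Theta_\alpha$ extends continuously to $\overline{\mathbb{C}_+}$. The question is whether these extensions admit a modulus of continuity uniform in $\alpha$.

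\textbf{Uniform boundary modulus (main obstacle).} The heart of the proof is to produce a uniform boundary modulus of continuity at points of $\mathbb{R}$. The plan is a harmonic measure / Herglotz representation argument. The function $u_\alpha(z):=\Im(\Theta_\alpha(z)-z)$ is harmonic on $\mathbb{C}_+$, has boundary values in $[0,\varepsilon]$ on $\mathbb{R}$ (since $\Theta_\alpha$ sends $\mathbb{R}$ onto the boundary of $\Omega_\alpha$, which consists of $\mathbb{R}$ and slits of height $\le\varepsilon$), and satisfies $u_\alpha(iy)=o(y)$ by the normalization $\Theta_\alpha'(\infty)=1$; it is therefore the Poisson integral of its boundary values and bounded by $\varepsilon$. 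The total boundary mass obeys $\sum_\mathfrak{m} h_{\mathfrak{m},\alpha}\le\varepsilon\sum_\mathfrak{m} e^{-\kappa|\mathfrak{m}|}$, which is uniformly bounded in $\alpha$ by $\nu$-dimensional lattice counting. The Diophantine condition \eqref{Diophantineomega} forces the bases present in any interval of length $\rho$ to differ by at least $(a_0/\rho)^{1/b_0}$ in index, so the cumulative height of slits whose bases lie in such an interval is at most $\varepsilon\sum_{|\mathfrak{m}|\ge(a_0/\rho)^{1/b_0}}e^{-\kappa|\mathfrak{m}|}$, which is superexponentially small in $\log(1/\rho)$. Near a single slit base, $\Theta_\alpha$ has a local square-root type behavior with amplitude $\lesssim\sqrt{h_{\mathfrak{m},\alpha}}\le\sqrt{\varepsilon}\,e^{-\kappa|\mathfrak{m}|/2}$. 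Summing these contributions over dyadic scales $\rho\to 0$ via the Poisson representation of $u_\alpha$ yields a uniform H\"older-type modulus of continuity for $\Theta_\alpha$ up to $\mathbb{R}$. The delicate point---and where I expect the main effort to lie---is the scale-by-scale bookkeeping, because the joint use of the Diophantine condition (for base spacing) and the exponential decay (for slit amplitude) is essential; neither hypothesis alone would control the oscillations coming from the dense set $\{\mathfrak{m}\omega_\alpha\}$ of slit bases. Combining this uniform boundary modulus with interior normality, the Arzel\`a--Ascoli theorem yields the asserted equicontinuity of $\{\Theta_\alpha\}$ on $\overline{\mathbb{C}_+}$.
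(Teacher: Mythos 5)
Your existence/uniqueness paragraph is correct and matches the paper: Lemma~\ref{lemmaconformalmaps} applies because each $\Omega_\alpha$ is a symmetric simply connected subdomain of $\bbC_+$ containing $\{\Im z > \ve\}$.

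For equicontinuity, however, your route diverges from the paper's and leaves a genuine gap. The paper does \emph{not} estimate $\Theta_\alpha$ via a Poisson representation; it instead proves that the sets $\hat\bbC \setminus \Omega_\alpha$ are \emph{uniformly locally connected} (two nearby boundary points always lie in a small connected subset of the complement), and then invokes the uniform version of Carath\'eodory's theorem from \cite[Prop.\ 2.3]{Po}, which converts that single geometric estimate into uniform equicontinuity of the normalized maps $f_\alpha$ on $\overline{\bbC_+}$. Both the Diophantine condition (slits with nearby bases have far-apart labels $\mathfrak{m},\mathfrak{m}'$) and the exponential height decay (so such slits are short) enter there, in one explicit modulus-of-continuity estimate for the connected set $B$.

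Your sketch identifies the same two hypotheses as the relevant inputs, but the reduction to them is incomplete in a way that is not merely ``scale-by-scale bookkeeping.'' First, controlling $u_\alpha = \Im(\Theta_\alpha - z)$ via its Poisson integral does not by itself control $\Theta_\alpha$: the real part $\Re\Theta_\alpha - x$ is the harmonic conjugate of $u_\alpha$, and an arbitrary uniform modulus of continuity of a boundary function does not transfer to its conjugate (you would need something like uniform H\"older continuity, via Privalov, and you have not produced it). Second, the bound $u_\alpha \le \ve$ and the total-mass bound $\sum h_{\mathfrak{m},\alpha} < \infty$ only constrain the \emph{measure} $\Im\Theta_\alpha(x)\,dx$; they say nothing a priori about where the preimages of the slits sit in $\bbR$, nor how stretched they are, which is precisely what a modulus of continuity of $\Theta_\alpha|_\bbR$ requires. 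Your phrase ``near a single slit base, $\Theta_\alpha$ has a local square-root type behavior with amplitude $\lesssim \sqrt{h_{\mathfrak{m},\alpha}}$'' describes the image side and does not give a boundary modulus on the domain side without further work relating the harmonic measure of the $\mathfrak{m}$-th slit to $h_{\mathfrak{m},\alpha}$ uniformly in $\alpha$. These are exactly the difficulties that the uniform local connectivity criterion is designed to sidestep: it makes no mention of moduli of $\Theta_\alpha$ itself, only of the geometry of the complements. I would recommend switching to the paper's route; if you want to pursue the Poisson-integral route, you must (a) supply the estimate on harmonic measure of slits that your dyadic summation presupposes, and (b) explain how to pass from control of $\Im\Theta_\alpha$ to control of $\Theta_\alpha$, and at present neither is done.
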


We use the spherical metric on $\hat\bbC$, given by
\[
d(z_1,z_2) = \frac{\lvert z_1 - z_2 \rvert}{\sqrt{1+\lvert z_1 \rvert^2} \sqrt{1+\lvert z_2 \rvert^2}}, \qquad z_1, z_2 \in \mathbb{C}
\]
and $d(z_1,\infty) = d(\infty,z_1) = \frac 1{\sqrt{1+\lvert z_1\rvert^2}}$. 

\begin{proof}
In geometric function theory, sets $\hat\bbC \setminus \Omega_\alpha$ are said to be uniformly locally connected if  for every $\epsilon > 0$, there exists $\delta > 0$ such that for all $\alpha$ and all $z,w \in \hat\bbC \setminus \Omega_\alpha$ with $d(z,w) < \delta$, there exists a connected set $B \subset \hat\bbC \setminus \Omega_\alpha$ such that $z,w \in B$ and $\lvert B \rvert < \epsilon$. We use $\lvert \cdot \rvert$ to denote the diameter of a set.

Consider two boundary points $z = x+ iy$, $z' = x' + i y'$. Let $c$ be the shorter of the two geodesics connecting $z, z'$, parametrized by $[0,1]$ so that $c(0) = z$ and $c(1)=z'$.

Let us first take care of the simple case $x=x'$ and without loss of generality let $y \le y'$. In that case, we choose $B = [z,z']$ and see that $\lvert B \rvert = d(z,z')$. 

From now on assume $x \neq x'$. Then, $B$  can be chosen to be the union of three segments
\[
B = [x, x+iy] \cup I \cup  [x', x'+ iy'],
\]
where $I \subset \overline{\mathbb{R}}$ is the shorter of the two geodesics in $\hat\bbC$ connecting $x$ and $x'$. Since $y, y' \in [0,\ve]$, it follows that
\[
\lvert c \rvert \ge d(x+iy, x'+iy') \ge \frac 1{1+\ve^2} d(x, x') = \frac 1{1+\ve^2} \lvert I \rvert.
\]
Also,
\[
\left\lvert \{ x + i t : t \in [0, y]\} \right\rvert  = d(x, x+iy) \le y.
\]
Using $c$ and two of the three line segments to estimate the diameter of $c \cup B$, we see that
\[
\lvert  B \rvert = \lvert c \cup B \rvert \le (2+\ve^2) \lvert c\rvert + Y, \qquad Y=\min(y,y').
\]
The only case when $Y \neq 0$ is when $c(0), c(1)$ lie on slits, i.e.
\[
c(0) = \mathfrak{m}\omega_\alpha + iy, \quad c(1) = \mathfrak{m}'\omega_\alpha + i y'.
\]
From exponential decay of the $h_\mathfrak{m}$, we conclude
\[
\lvert \mathfrak{m}\rvert , \lvert \mathfrak{m}'\rvert \le \frac 1{\kappa} \ln \frac{\ve}Y.
\]
From the Diophantine condition we see
\[
\lvert (\mathfrak{m} - \mathfrak{m}') \omega_\alpha \rvert \ge a_0 \lvert \mathfrak{m}-\mathfrak{m}' \rvert^{-b_0} \ge a_0 \left(\frac 2{\kappa} \ln \frac{\ve}Y \right)^{-b_0},
\]
so
\[
\lvert c \rvert \ge \frac 1{1+\ve^2} d(\mathfrak{m}\omega_\alpha, \mathfrak{m}' \omega_\alpha)  \ge \frac 1{1+\ve^2}  \frac{\lvert (\mathfrak{m}-\mathfrak{m}')\omega_\alpha\rvert}{\sqrt{1+\lvert \mathfrak{m}\omega_\alpha \rvert^2}\sqrt{1+\lvert \mathfrak{m}'\omega_\alpha \rvert^2}} \ge \frac 1{1+\ve^2}  \frac{ a_0\left(\frac 2{\kappa} \ln \frac{\ve}Y \right)^{-b_0} }{1 + \left( \frac 1{\kappa} \ln \frac{\ve}Y r \right)^2} =: F(Y).
\]
Note that $F(Y)$ is a strictly increasing function of $Y$ depending only on $a_0,b_0,r,\ve,\kappa$, and $\lim_{Y\to 0} F(Y) = 0$. Combining this with previous estimates, we see that
\[
\lvert B \rvert \le (2+\ve^2) \lvert c \rvert + F^{-1}(\lvert c \rvert),
\]
where $F^{-1}$ is the inverse function of $F$ on $(0,\infty)$.

Thus, the domains $\Omega_\alpha$ are uniformly locally connected (see \cite[Chapter 2]{Po}; this reference is formulated for bounded domains, with Euclidean distance, but by conjugating our maps by the Cayley transform, it reduces to our setting of domains in $\bbC_+$). In particular, by a uniform version of Carath\'eodory's theorem \cite[Prop. 2.3]{Po}, the conformal bijections $f_\alpha: \bbC_+ \to \Omega_\alpha$ with
\[
f_\alpha(i)=i, \qquad f_\alpha'(i) > 0
\]
have continuous extensions to $\ol{\bbC_+}$ and are uniformly equicontinuous in $\ol{\bbC_+}$. Thus, the maps $\Theta_\alpha$ are uniquely determined by
\[
\Theta_\alpha(z) = f_\alpha( f_\alpha'(\infty) z).
\]
Since $1 \le f_\alpha'(\infty) \le (1-\ve)^{-1}$ and $f_\alpha$ are uniformly equicontinuous, $\Theta_\alpha$ are uniformly equicontinuous.
\end{proof}

The above setup includes both the case of irrational, Diophantine frequency vectors and the case of rational frequency vectors obeying a Diophantine condition in a box \eqref{eq:PAI7-5-8}, \eqref{eq:PAIombasicTcondition}. Indeed, the first application of this equicontinuity statement will be  to a sequence of rational approximants of the Diophantine frequency $\omega$; the second application will be to the constant sequence $\omega_n = \omega$.

To geometrically characterize convergence of conformal maps, Carath\'eodory introduced the following notion of kernel convergence \cite[Section 1.4]{Po}, with respect to an internal point which we take to be $i$. Let $\Omega_n \subset \bbC$ be domains containing $i$. It is said that $\Omega_n \to \Omega$ in the sense of kernel convergence with respect to $i$ if:
\begin{enumerate}[(i)]
\item either $\Omega= \{i\}$, or $\Omega \subsetneq \bbC$ is a domain with $i\in \Omega$ such that every $w\in \Omega$ has a neighborhood which lies in $\Omega_n$ for all sufficiently large $n$
\item for every $w\in \partial\Omega$ there exist $w_n \in \partial\Omega_n$ such that $w_n\to w$ as $n\to\infty$
\end{enumerate}

\begin{prop}\label{propMOconvergence}
Take a sequence $\{\Omega_n\}_{n=1}^\infty$ from the family of comb domains $\{\Omega_\alpha\}_{\alpha\in \cA}$ considered in Lemma~\ref{propequicontinuity}. Assume that the limit
\begin{equation}\label{convergenceomega}
\lim_{n\to\infty} \omega_{n} = \omega
\end{equation}
exists and that $\omega$ has linearly independent entries over $\bbQ$, i.e., $m\omega \neq 0$ for all $m\in\zv \setminus\{0\}$. 
Then the following are equivalent:
\begin{enumerate}[(i)]
\item For every $m\in\zv$, the limit
\begin{equation}\label{hmlimit}
h_{m} := \lim_{n\to\infty} h_{\mathfrak{m}, n}
\end{equation}
exists (note that we use $\mathfrak{m} = m+\mathfrak{N}(\omega_{n})$ to denote the coset containing $m\in\zv$ in an $n$-dependent Abelian group).
\item The maps $\Theta_n$ have a pointwise limit on $\{z \in \bbC_+ \mid \Re z > 0\}$.
\item The maps $\Theta_n$ converge uniformly on $\ol{\bbC_+}$.
\end{enumerate}
In this case, the limit of $\Theta_n$ is the conformal map $\Theta$ corresponding to the comb domain
\begin{equation}\label{Omegacombdomain}
\Omega = \mathbb{C}_+ \setminus \bigcup_{{m} \in\zv} (m\omega, m\omega+ih_m]. 
\end{equation}
\end{prop}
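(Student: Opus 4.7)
The plan is to establish the equivalences via the cycle (iii)$\Rightarrow$(ii)$\Rightarrow$(iii)$\Rightarrow$(i)$\Rightarrow$(iii). The implication (iii)$\Rightarrow$(ii) is immediate. For (ii)$\Rightarrow$(iii), I would combine Lemma~\ref{propequicontinuity} with the Arzel\`a--Ascoli theorem, viewing $\overline{\mathbb{C}_+}$ as a compact subset of the Riemann sphere with the spherical metric: uniform equicontinuity ensures any subsequence of $\{\Theta_n\}$ has a sub-subsequence converging uniformly on $\overline{\mathbb{C}_+}$; hypothesis (ii) determines any such limit on $\{\Re z > 0\}$, the symmetry $\Theta_n(z) = -\overline{\Theta_n(-\bar z)}$ inherited from \eqref{normalizationconformal} extends it to $\{\Re z < 0\}$, and continuity on $\overline{\mathbb{C}_+}$ does the rest. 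Uniqueness of subsequential limits then upgrades to uniform convergence of the full sequence.

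For (i)$\Rightarrow$(iii), let $\Omega$ denote the comb domain \eqref{Omegacombdomain} and $\Theta$ its unique conformal map with the normalization \eqref{normalizationconformal}. The first step is to establish Carath\'eodory kernel convergence $\Omega_n \to \Omega$ with respect to the interior base point $i$ (which lies in $\Omega$ since $\ve < 1$): the uniform decay \eqref{hmexpdecay} makes slits with $|m| > M$ negligible, while for $|m| \le M$ only finitely many slits are involved, and these converge in Hausdorff distance because $\omega_n \to \omega$ and $h_{\mathfrak{m}, n} \to h_m$. Applying the Carath\'eodory kernel convergence theorem \cite[Theorem 1.8]{Po} to the internally normalized maps $f_n, f$ from Lemma~\ref{lemmaconformalmaps} then gives $f_n \to f$ locally uniformly on $\mathbb{C}_+$. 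Writing $\Theta_n(z) = f_n(c_n z)$ with $c_n := f_n'(\infty) \in [1,\,1/(1-\ve)]$, any subsequence admits a further subsequence with $c_{n_k} \to c^* \in [1,\,1/(1-\ve)]$, whence $\Theta_{n_k}(z) \to f(c^* z)$ locally uniformly.

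The main obstacle is to show $c^* = c := f'(\infty)$; mere locally uniform convergence does not automatically preserve an angular derivative at $\infty$. My plan is to derive, uniformly in $n$, a two-term expansion $\Theta_n(iy)/(iy) = 1 + O(1/y)$ with implicit constant independent of $n$. This should follow from the exponential-Herglotz and product representation underlying \eqref{thetaprimeproduct} for $w_n'$ (valid for general comb domains via Schwarz--Christoffel, cf.\ \cite{EY, EL}), where the uniform exponential decay \eqref{hmexpdecay} and the polynomial bound \eqref{productAmbound1} on the related infinite products yield uniform control of the leading Laurent coefficients. Passing to the limit then gives $f(ic^* y)/(iy) \to 1$ as $y \to \infty$, which together with $f(z)/z \to 1/c$ forces $c^* = c$. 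Once $c_n \to c$ is established, $\Theta_n \to f(c \cdot) = \Theta$ locally uniformly on $\mathbb{C}_+$, and the equicontinuity from Lemma~\ref{propequicontinuity} promotes this to uniform convergence on $\overline{\mathbb{C}_+}$.

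Finally, (iii)$\Rightarrow$(i) follows by diagonal extraction and uniqueness. The bound \eqref{hmexpdecay} lets any subsequence of $\{h_{\mathfrak{m},n}\}$ be refined so that $h_{\mathfrak{m},n_k} \to h_m^*$ for every $m$; applying (i)$\Rightarrow$(iii) to this sub-subsequence identifies the uniform limit of $\Theta_{n_k}$ as the Marchenko--Ostrovskii map of the comb domain with heights $\{h_m^*\}$. Since this limit coincides with $\Theta$ by hypothesis, the image $\Theta(\mathbb{C}_+)$ determines the slit heights, forcing $h_m^* = h_m$. As every subsequential limit agrees, $h_{\mathfrak{m},n} \to h_m$, and the uniform limit in (iii) is identified as the Marchenko--Ostrovskii map of \eqref{Omegacombdomain}.
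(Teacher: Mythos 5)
Your overall architecture is sound and the three equivalences are organized into a valid cycle; the Arzel\`a--Ascoli route to (ii)$\Rightarrow$(iii) and the diagonal-extraction route to (iii)$\Rightarrow$(i) both work. The essential difficulty, which you correctly identify, is in (i)$\Rightarrow$(iii): after kernel convergence gives $f_n \to f$, one must show the normalization constants $c_n = f_n'(\infty)$ converge to $c = f'(\infty)$, since locally uniform convergence alone does not control angular derivatives at a boundary point.

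Your proposed justification of this step has a genuine gap. You invoke the product representation underlying \eqref{thetaprimeproduct} and the bound \eqref{productAmbound1} to get a uniform two-term expansion $\Theta_n(iy)/(iy) = 1 + O(1/y)$; but those were established in the paper for Marchenko--Ostrovskii maps of \emph{periodic Schr\"odinger operators} in $\mathcal{P}(\tilde\omega,\ve,\kappa_0,\alpha_0)$, whereas the domains in Proposition~\ref{propMOconvergence} are abstract comb domains satisfying only the hypotheses of Lemma~\ref{propequicontinuity}, with no Schr\"odinger operator attached. Transferring those product estimates to arbitrary comb domains, and extracting constants uniform in $n$, would require a substantial detour; the parenthetical ``valid for general comb domains via Schwarz--Christoffel'' is not a substitute. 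The fact you need \emph{is} true, but the cheap route is a Julia--Wolff sandwich: since $\{\Im z > \ve\} \subset \Omega_n \subset \bbC_+$ and $\Theta_n'(\infty)=1$, applying Julia--Wolff to $\Theta_n:\bbC_+\to\bbC_+$ gives $\Im\Theta_n(z)\ge\Im z$, and applying it to $\Theta_n^{-1}(\cdot + i\ve):\bbC_+\to\bbC_+$ gives $\Im\Theta_n(z)\le\Im z+\ve$, hence $|\Theta_n(iy)-iy|\le\ve$ uniformly in $n$, which closes the gap. The paper itself proceeds differently but in the same spirit: it proves (i)$\Rightarrow$(ii), sandwiching $f_n'(\infty)$ between angular derivatives of finitely-truncated comb maps $g_{n,M}$ via Lemma~\ref{comparisonSP}, showing $g_{n,M}'(\infty)\to g_M'(\infty)$ for each fixed $M$ by the reflection principle, and then letting the truncation level improve. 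Either of these Schwarz--Pick-type comparisons is more elementary and more robust than the Herglotz-product route you sketched.
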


\begin{proof}
(i)$\implies$(ii): We begin by observing that, since $\mathfrak{N}(\omega) = \{0\}$,  for each fixed $m\in \zv\setminus\{0\}$, $\lim_{n\to\infty} m\omega_n = m\omega \neq 0$, so $m\omega_n \neq 0$ for large enough $n$. Thus, for all $M\in\mathbb{N}$ there exists $N=N(M)\in\mathbb{N}$ such that
\begin{equation}\label{vanishingkernel}
\{ m\in \zv \mid \lvert m \rvert \le M \} \cap \mathfrak{N}(\omega_n) = \{ 0\}, \qquad \forall n\ge N.
\end{equation}
In particular, \eqref{vanishingkernel} implies  for any $m\in \zv$, we have $\lvert m\rvert = \lvert m + \mathfrak{N}(\omega_n) \rvert$ for all large enough $n$, so the uniform estimates
\eqref{Diophantineomega}, \eqref{hmexpdecay} extend to the limit, and we can consider the comb domain $\Omega$ to be part of our family $\{\Omega_\alpha\}_{\alpha \in \cA}$.

Assume $z \in \Omega$ and fix $L < \Im z$. There exists $m_0 \in \bbN$ such that $\lvert m \rvert >  m_0$ implies $h_{\mathfrak{m},n} < L$. Of the remaining finitely many slits with $\lvert m \rvert \le m_0$, each has $h_{m,0} < \Im z$ or $m\omega_0 \neq \Re z$, so $\liminf \dist(z,\Omega_n) > 0$.

Now assume $z \in \partial\Omega$. If $z = m \omega + iy$, the points $z_n = m \omega_n + i \min\{y, h_{m,n}\} \in \partial\Omega_n$ converge to $z$; if $z \in \ol\bbR$, we may take $z_n = z\in \partial\Omega_n$.

Thus, pointwise convergence \eqref{hmlimit} implies kernel convergence of domains $\Omega_n$ to $\Omega$ with respect to the point $i$ (see \cite[Section 1.4]{Po}), so it implies that $f_{n} \to f$ uniformly on $\ol{\bbC_+}$, where the $f_n$'s are the conformal bijections to $\Omega_n$ satisfying the normalization \eqref{falphanormalization}. Since $\Theta_n(z) = f_n(f_n'(\infty) z)$, it remains to prove that $f_n'(\infty) \to f'(\infty)$.

Fix $\delta> 0$ and let $M = M(\delta) \in\mathbb{N}$ be chosen so that $\ve e^{-\kappa M} < \delta$. Denote by $g_{\alpha,M}$ the conformal maps $\mathbb{C}_+ \to \mathbb{C}_+ \setminus \bigcup_{\lvert \mathfrak{m} \rvert \le M} \{ \mathfrak{m} \omega_\alpha + i [0,h_{\mathfrak{m},\alpha}] \}$ with $g_{\alpha,M}(i)=i$, $g'_{\alpha,M}(i) > 0$.  By the same argument as above, the conformal bijections $g_{n,M}$ converge to $g_{M}$ uniformly on $\overline{\mathbb{C}_+}$. Moreover, there is a finite supremum
\[
L = L(M) = \sup_n g_{n,M}( M \lvert \omega_{n} \rvert ) < \infty
\]
such that $g_{n,M}(x) \in \mathbb{R}$ if $x\in\mathbb{R}$ with $\lvert x \rvert > L$. Using the reflection principle on the functions $1/g_{n,M}(1/z)$ in a neighborhood of $0$ and their uniform convergence, we see that also $g_{n,M}'(\infty) \to g_{0,M}'(\infty)$.

By Lemma~\ref{comparisonSP}, since the range of $g_{\alpha,M}$ is a subset of the range of $f_{\alpha,M}$, we obtain
\[
f_{\alpha}'(\infty) \le \frac{f_{\alpha}^{-1}(i)}{g_{\alpha,M}^{-1}(i)} g_{\alpha,M}'(\infty)
\]
and since the range of $f_{\alpha}$ is a subset of the range of $g_{\alpha,M} - i \delta$, we obtain
\[
g_{\alpha,M}'(\infty) \le \frac{g_{\alpha,M}^{-1}(i+i\delta)}{f_{\alpha}^{-1}(i)} f_{\alpha}'(\infty).
\]
Thus,
\begin{equation}\label{twosidedestimateinfinity}
 \frac i{g_{\alpha,M}^{-1}(i+i\delta)} g_{\alpha,M}'(\infty) \le f_\alpha'(\infty) \le g_{\alpha,M}'(\infty).
\end{equation}
Applying these estimates to the sequence $f_n$ and taking limits as $n\to\infty$, we obtain
\[
\frac i{g_{M}^{-1}(i+i\delta)} g_{M}'(\infty) \le \liminf_{n\to\infty} f_{n}'(\infty) \le \limsup_{n\to\infty}f_{n}'(\infty) \le g_{M}'(\infty).
\]
Applying \eqref{twosidedestimateinfinity} also to $f$, we conclude
\[
\frac i{g_{M}^{-1}(i+i\delta)} f'(\infty) \le  \liminf_{n\to\infty} f_{n}'(\infty) \le \limsup_{n\to\infty} f_{n}'(\infty) \le \frac {g_{M}^{-1}(i+i\delta)}i f'(\infty).
\]
As $\delta \to 0$, $g_{M}$ converges to $f$, so $g_{M}^{-1}(i+i\delta) \to i$. This implies $f_n'(\infty) \to f'(\infty)$.

(ii)$\implies$(iii): this follows from the equicontinuity statement in Lemma~\ref{propequicontinuity}.

(iii)$\implies$(i):  If $f_n \to f$ pointwise on $\bbC_+$, then the domains $\Omega_n$ converge in kernel convergence with respect to the point $i$ to a domain $\Omega$ \cite[Theorem 1.8]{Po}. Consider a point $z = x + i y$, where $x=m\omega$ for some $m \in \mathbb{Z}^\nu$ and $y>0$. Pick $\epsilon \in (0,y/2)$ such that the square $B_\epsilon = (x-\epsilon,x+\epsilon) + i (y-\epsilon,y+\epsilon)$ is disjoint from the boundaries of $\Omega_{n}$, except possibly from their $m$-th slits. This can be done by finding $M>0$ such that $\lvert \mathfrak{m} \rvert \ge M$ implies $h_{\mathfrak{m},n} < y/2 < y -\epsilon$, and then ensuring, by the Diophantine condition, that $\epsilon$ is small enough that $\lvert \mathfrak{m}' \rvert \le M$, $\mathfrak{m}'\neq \mathfrak{m}$ implies $\lvert \mathfrak{m} \omega_{n} - \mathfrak{m}' \omega_{n} \rvert > \epsilon$. Then,
\[
B_\epsilon \setminus \Omega_n = \{ m\omega_{n} + i t\mid  y - \epsilon < t < \min(y+\epsilon,h_{m,n}) \}.
\]
Kernel convergence implies that we cannot have $\liminf h_{m,n} < y - \epsilon$ or $\limsup h_{m,n} > y+\epsilon$. Since this is true for any $y>0$ and any small enough $\epsilon>0$, it implies that $\lim_{n\to\infty} h_{m,n}$ exists.
\end{proof}

\section{Periodic Approximation and Translation Flows} \label{sectionTranslation}

Applying the above ingredients to the periodic approximants $V^{(r)}$ of the small quasiperiodic potential $V \in \mathcal{P}(\omega,\ve,\kappa_0)$ gives the following description:

\begin{prop}\label{propConthm}
Under the assumptions of Theorem~\ref{thmCombDirect}, there exists $\ve_1 = \ve_1(a_0,b_0,\kappa_0) > 0$ such that, if $\ve < \ve_1$ and $V \in \mathcal{P}(\omega,\ve,\kappa_0)$, then
\begin{enumerate}[(i)]
\item its Marchenko--Ostrovskii domain is a comb domain of the form \eqref{eqncombdomain1} with $h_m = h_{-m} \ge 0$ for all $m\in \zv \setminus \{0\}$, whose slit heights obey
\[
h_m \le \varepsilon^{1/2} \exp\left( - \frac{\kappa_0}5 \lvert m \rvert \right).
\]
\item The heights and gap sizes of $V$ obey the two-sided estimates 
\[
\frac 1{2A(m)} \gamma_m \le h_m \le 2 A(m) \gamma_m,
\]
where $A(m)$ is defined by \eqref{Amdefinition}.
\item The Marchenko--Ostrovskii map $w$ has a continuous extension to $\bbC$; for each $m\in \zv$ with $m\omega > 0$, there is a unique $\lambda_m \in [E_m^-, E_m^+]$ with $w(\lambda_m) = m\omega + ih_m$, and for $z\in \bbC_+$, the derivative $w'$ is given by the product formula
\begin{equation}\label{wderivativeproduct}
- i w'(z) = i \left( \frac 1{4(E_0 - z)} \prod_{\substack{l \in \mathbb{Z}^\nu \\ l \omega > 0}} \frac{(\lambda_{l}- z)^2}{(E_{l}^- - z)(E_{l}^+ - z)} \right)^{1/2}.
\end{equation}
\item If the $m$-th gap is open, the derivative $w'$ has an analytic extension through $(E_m^-, E_m^+)$ given by the same product formula \eqref{wderivativeproduct}, and
\[
w'(E) = O( \lvert E - E_m^\pm \rvert^{-1/2}), \qquad E \to E_m^\pm, \quad E \in (E_m^-, E_m^+).
\]
The slit height is given by the convergent integral
\[
h_m = \frac 12 \int_{E_m^-}^{E_m^+} \left\lvert  w'(E) \right\rvert dE.  
\]
\end{enumerate}
\end{prop}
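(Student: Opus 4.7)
My approach would be periodic approximation. For $V \in \mathcal{P}(\omega,\ve,\kappa_0)$ with $\ve < \ve_1$, consider the periodic approximants $V^{(r)}$ defined in \eqref{eqnPeriodicApproximantVr}. Each $V^{(r)}$ lies in $\mathcal{P}(\omega^{(r)},\ve,\kappa_0)$, and the canonical rational $\omega^{(r)}$ satisfies a Diophantine-in-a-box condition \eqref{eq:PAI7-5-8}--\eqref{eq:PAIombasicTcondition} uniformly in $r$. Theorem~\ref{thmtildef}(i) then yields the uniform height bound $h_{\mathfrak{m},r} \le \ve^{1/2} \exp(-\tfrac{\kappa_0}{5}|\mathfrak{m}|)$, placing the associated comb domains $\Omega_r$ in the framework of Lemma~\ref{propequicontinuity} with constants independent of $r$.

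Next, I would invoke Proposition~\ref{prop:wconvoffboundary} on the data $(U,\omega^{(r)},0) \to (U,\omega,0)$, which gives $w^{(r)} \to w_V$ pointwise on $\bbC \setminus \bbR$. Via the substitution \eqref{eqnMOtransform} this lifts to pointwise convergence of the normalized conformal maps $\Theta_r \to \Theta_V$ on $\bbC_+$. Proposition~\ref{propMOconvergence} then delivers three things at once: the limits $h_m := \lim_r h_{\mathfrak{m},r}$ exist for each $m \in \zv$; the limit Marchenko--Ostrovskii domain is the comb \eqref{eqncombdomain1}; and $\Theta_r \to \Theta_V$ uniformly on $\ol{\bbC_+}$. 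Passing $r \to \infty$ in the uniform height bound yields (i), and uniform convergence on the closed half-plane gives the continuous extension of $w_V$ needed in (iii). Continuity of each $h_m$ as a function of $V$ comes from running the same scheme, using Lemma~\ref{lemmaUniformCompacts} to translate between convergence of sampling functions and uniform convergence of the potentials on compacts.

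For the product formula in (iii), each $V^{(r)}$ satisfies \eqref{thetaprimeproduct} at the periodic level. By continuity of the spectrum under uniform convergence of potentials, $E_{\mathfrak{m},r}^\pm \to E_m^\pm$, and the zeros $\lambda_{\mathfrak{m},r}$ of $(w^{(r)})'$ in each closed gap converge to the unique zero $\lambda_m \in [E_m^-,E_m^+]$ of the limiting $w_V'$. The two-sided estimate (ii) then follows from Lemma~\ref{lemma:productloglogm} applied at each $r$, combined with $A(\mathfrak{m},r) \to A(m)$. Part (iv) is a direct consequence of the product representation \eqref{wderivativeproduct}: in an open gap $(E_m^-, E_m^+)$ every other factor remains analytic and nonzero in a neighborhood, so $w_V'$ extends analytically and inherits the $O(|E-E_m^\pm|^{-1/2})$ behavior from the explicit factor $((E_m^- - z)(E_m^+ - z))^{-1/2}$. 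The integral identity $h_m = \tfrac{1}{2}\int_{E_m^-}^{E_m^+} |w_V'|\,dE$ follows because $w_V$ parametrizes the slit boundary from $m\omega$ up to $m\omega + ih_m$ and back, with $\Re w_V$ strictly monotone on each of $[E_m^-,\lambda_m]$ and $[\lambda_m, E_m^+]$.

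The main obstacle I anticipate is justifying the passage to the limit in the infinite product \eqref{wderivativeproduct}, which is complicated by the $r$-dependent indexing in $\mathfrak{Z}(\omega^{(r)})$ and by the infinitude of factors. The remedy is that for any fixed truncation radius $M$, the argument surrounding \eqref{vanishingkernel} shows that $\mathfrak{m} \leftrightarrow m$ is an eventual bijection on $|m| \le M$, so the finite truncated product converges termwise. The tail is then controlled uniformly in $r$ using the polynomial $A$-bound \eqref{productAmbound1} together with the uniform exponential decay of $\gamma_{\mathfrak{m},r}$ from Theorem~\ref{thmtildeb}(i); these together yield absolute uniform convergence of the logarithm of the product on compacts in $\bbC \setminus \cS$, justifying the passage to the limit and delivering \eqref{wderivativeproduct}.
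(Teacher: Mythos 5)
Your proposal follows essentially the same strategy as the paper: periodic approximation via $V^{(r)}$, Proposition~\ref{prop:wconvoffboundary} for pointwise convergence of $w^{(r)}$, Theorem~\ref{thmtildef}(i) for uniform slit bounds, Lemma~\ref{propequicontinuity} plus Proposition~\ref{propMOconvergence} for uniform convergence of $\Theta^{(r)}$ to the conformal map of the comb, and a termwise-plus-tail argument for the product formula, with Lemma~\ref{lemma:productloglogm} supplying the two-sided estimates in (ii) and the convergence in (iv). Two points are glossed over and should be made explicit. First, the substitution \eqref{eqnMOtransform} requires $E_0^{(r)}\to E_0$ before $w^{(r)}\to w$ can be upgraded to $\Theta^{(r)}\to\Theta$; the paper cites this from \cite[Theorem~7.1]{DGL2}, and it is not an automatic consequence of uniform convergence on compacts of the potentials. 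Second, your statement that ``$E_{\mathfrak{m},r}^\pm\to E_m^\pm$ by continuity of the spectrum'' and that ``the zeros $\lambda_{\mathfrak{m},r}$ converge to the unique zero of $w_V'$'' leans on facts that need their own argument: uniform convergence of potentials gives only lower semicontinuity of spectra, and the existence/uniqueness of $\lambda_m$ is itself part of what is being proved. The paper handles the gap edges by monotonicity of $\Re\Theta^{(r)}$ on the real line, and handles $\lambda_m^{(r)}\to\lambda_m$ via weak convergence of the measures $\xi^{(r)}\,d\lambda$ in the exponential Herglotz representation of $i(w^{(r)})'$; alternatively one can argue by compactness of $[E_m^-,E_m^+]$ together with uniqueness of the critical point (which does follow from the already-established comb structure), but either way the step needs to be stated, not asserted. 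With those two points filled in, your proof coincides with the paper's.
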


We note that some qualitative parts of this statement can also be obtained in other ways by using previously proved properties of these potentials, including homogeneity of its spectrum \cite{DGL3}; for instance, existence of a product representation \eqref{wderivativeproduct} is a general consequence of the fact that the Lyapunov exponent is zero on the spectrum. We give a self-contained proof which also prepares for the more specialized quantitative estimates.

\begin{proof}
We use the sequence of periodic approximants \eqref{eqnPeriodicApproximantVr}, denoting their associated objects with the superscript $(r)$. We transform their Marchenko--Ostrovskii maps $w^{(r)}$ into the form
\[
\Theta^{(r)}(z) = - i w^{(r)}(z^2+E_{0}^{(r)}).
\]
The bottom of the spectrum converges, $E_{0}^{(r)} \to E_0$, see \cite[Theorem~7.1]{DGL2}. By Prop.~\ref{prop:wconvoffboundary}, $w^{(r)}$ converge pointwise on $\bbC_+$ to $w$, so the functions $\Theta^{(r)}$ converge uniformly on $\ol{\bbC_+}$ to the function
\[
\Theta(z) = - i w(z^2+ E_0).
\]
The derivatives $(\Theta^{(r)})'$ also converge to $\Theta'$ on $\bbC_+$. 

Next, let us establish convergence of gap edges and critical points. On the real line,  $\Re \Theta^{(r)}$ are continuous increasing functions converging to $\Re\Theta$ and they all map $0$ to $0$. Note
\[
E_m^\pm = \pm \max \{\pm E \in \bbR \mid \Re \Theta(E) = m\omega\}
\]
and analogous characterizations of $E_{m}^{\pm,(r)}$. If $E > E_m^+$, then $\Theta(E) > m\omega$, so $\Theta^{(r)}(E) > m\omega$ for all large enough $r$, so $E > E_m^{+,(r)}$ for all large enough $r$. Thus, $\limsup_{r\to\infty} E_m^{+,(r)} \le E$. Together with analogous arguments with opposite inequalities, this gives
\[
E_m^- \le \liminf_{r\to\infty} E^{-,(r)}_{m} \le \limsup_{r\to\infty} E^{+,(r)}_{m} \le E_m^+.
\]
If the $m$-th gap is closed, this implies
\[
E_m^- = \lim_{r\to\infty} E^{-,(r)}_{m} = \lim_{r\to\infty} \lambda^{(r)}_{m} =  \lim_{r\to\infty} E^{+,(r)}_{m} = E_m^+.
\]
If the $m$-th gap is open, convergence of the exponential Herglotz representations
\[
i (w^{(r)})'(z) =  \frac 1{2\sqrt{E_0 - z}} e^{\int_{[E_0^{(r)},\infty) \setminus \cS^{(r)}} \frac{\xi^{(r)}(\lambda)}{\lambda - z} \,d\lambda} 
\]
implies weak convergence of $\xi^{(r)}(\lambda)\,d\lambda$ against $C_c(\bbR)$ functions, so it implies convergence $E_m^{\pm,(r)} \to E_m^\pm$ and $\lambda_m^{(r)}$ to $\lambda_m$ for some $\lambda_m \in [E_m^-, E_m^+]$. Due to  $w(\lambda_m) = \lim_{r\to\infty} w^{(r)}(\lambda_m^{(r)})= m\omega +  ih_m$, we have $E_m^- < \lambda_m < E_m^+$.

For $z \in \bbC \setminus \cS$, the estimates
\[
\left\lvert \frac{\lambda_{m}- z}{E_{m}^\pm -  z} - 1 \right\rvert = \left\lvert \frac{\lambda_{m}- E_m^\pm}{E_{m}^\pm -  z}  \right\rvert  \le \frac{\gamma_m}{\dist(z, \cS)}
\]
give estimates uniform on compacts, and uniform in $r$, on the terms in the product representations of $w'$. Thus, convergence of each term of the product implies that the product formulas converge. The remaining claims now follow from the estimates in Lemma~\ref{lemma:productloglogm}.
\end{proof}

In particular, this contains Theorem~\ref{thmCombDirect}:

\begin{proof}[Proof of Theorem~\ref{thmCombDirect}] 
The comb domain structure is described in Prop.~\ref{propConthm}, and continuity of $h_m$ in $V$ follows from Prop.~\ref{propMOconvergence}. 
\end{proof}

We now turn to the converse direction: given a comb domain with exponentially decaying slit heights $h_m$, 
does it correspond to quasiperiodic Schr\"odinger operators in our regime? The frequency module could be read off from the locations of the bases ($x$-coordinates) of the slits \cite{EY,SY2,SY1}, but the quantitative information about the analyticity and smallness of the potential is more specialized.  We will study this by periodic approximation, building upon the result of Marchenko that in the periodic case, any $\ell_1^2$ set of slits is possible.

The proof is an adaptation of methods of \cite{DGL2}. The work \cite{DGL2} starts from the isospectral tori of a small quasiperiodic potential $V$ and its periodic approximants $V^{(r)}$, and  shows convergence of the trajectories of the corresponding Dubrovin flows. This is used to conclude that the trajectory on the isospectral torus of $V$ gives a potential of the desired form. Here, instead, we will start from a desired set of slits $h_m$. We will take periodic isospectral tori with frequencies $\omega^{(r)}$ and with essentially the same set of slits. We will show that the trajectories of the Dubrovin flows converge, that the limits are quasiperiodic operators in our regime, and that their slits are precisely $h_m$. Another application of this method is a continuity statement: within our class of potentials, we wish to know that convergence of slit heights and Dirichlet data implies convergence of potentials. We therefore point out that the approach in \cite{DGL2} is robust enough to yield both of those applications. Assume that we have a sequence of sets
\[
\mathcal{S}^{(r)} = [E_0^{(r)},\infty) \setminus \bigcup_{\substack{\mathfrak{m} \in \mathfrak{Z}(\omega^{(r)}) \\  \mathfrak{m} \omega^{(r)} > 0}} (E^{-,(r)}_\mathfrak{m} ,E^{+,(r)}_\mathfrak{m} )
\]
such that, uniformly in $r$, we have the following gap-band estimates:
\begin{enumerate}[(i)]
\item For any $\mathfrak{m}$,
\[
E_{\mathfrak{m} }^{+,(r)} - E_{\mathfrak{m} }^{-,(r)} \le \ve_1 \exp( - \kappa \lvert \mathfrak{m} \rvert ).
\]
\item For any $\mathfrak{m} ' \neq \mathfrak{m} $ with $\lvert \mathfrak{m} ' \rvert \ge \lvert \mathfrak{m} \rvert$,
\[
\dist\left( [E_\mathfrak{m}^{-,(r)},E_\mathfrak{m}^{+,(r)}],[E_{\mathfrak{m}'}^{-,(r)},E_{\mathfrak{m}'}^{+,(r)}]\right) \ge a \lvert \mathfrak{m}' \rvert^{-b} \ge \left(  \ve_1 \exp( - \kappa \lvert \mathfrak{m}'\rvert ) \right)^{1/4}.
\]
\item For any $\mathfrak{m}$ with $\mathfrak{m} \omega > 0$,
\[
a \lvert \mathfrak{m}\rvert^{-b} \le E_\mathfrak{m}^{-,(r)} - E_0^{(r)} \le C \lvert \mathfrak{m} \rvert^2.
\]
\end{enumerate}

Assume that the sets $\mathcal{S}^{(r)}$ converge to
\[
\mathcal{S} = [E_0,\infty) \setminus \bigcup_{\substack{m\in \zv \\ m\omega > 0}} (E^{-}_m,E^{+}_m)
\]
in the sense that for each $m$, $E_m^{\pm,(r)} \to E_m^\pm$, and that $\mathfrak{N}(\omega) = \{0\}$. Clearly, then, the set $\mathcal{S}$ obeys the same gap-band estimates listed above.

We will then consider Dubrovin flows corresponding to the sets $\cS^{(r)}$. Recall that the Dirichlet data $(\mu_{\mathfrak{m}}^{(r)}, \sigma_{\mathfrak{m}}^{(r)})_{\mathfrak{m}} \in \cD(\cS^{(r)})$ can be reparametrized in terms of angular variables $\varphi_{\mathfrak{m}}^{(r)}$ so that
\begin{align*}
\mu_{\mathfrak{m}}^{(r)} & = E_{\mathfrak{m}}^{-,(r)} + (E_{\mathfrak{m}}^{+,(r)} - E_{\mathfrak{m}}^{-,(r)}) \sin^2 \varphi_{\mathfrak{m}}^{(r)} \\
\sigma_{\mathfrak{m}}^{(r)} & = \sgn \sin \varphi_{\mathfrak{m}}^{(r)}
\end{align*}
and that the Dubrovin flow corresponding to translation is then expressed as
\[
\partial_x \varphi_{\mathfrak{m}}^{(r)} = \sqrt{ (\mu_{\mathfrak{m}}^{(r)} - E_0^{(r)} ) \prod_{{\mathfrak{m}}' \neq {\mathfrak{m}}}  \frac{ (E_{{\mathfrak{m}}'}^{-,(r)} - \mu_{\mathfrak{m}}^{(r)} )(E_{{\mathfrak{m}}'}^{+,(r)} - \mu_{\mathfrak{m}}^{(r)} )  }{(\mu_{{\mathfrak{m}}'}^{(r)} - \mu_{\mathfrak{m}}^{(r)})^2  }   }.
\]
The corresponding potential   $Q^{(r)}(x, \varphi_{\mathfrak{m}}^{(r)})$ is obtained by the trace formula,
\[
Q^{(r)}(x,\varphi_{\mathfrak{m}}^{(r)}) = E_0^{(r)} +  \sum_{{\mathfrak{m}}} E_{\mathfrak{m}}^{+,(r)} + E_{\mathfrak{m}}^{-,(r)} - 2 \mu_{\mathfrak{m}}^{(r)}.
\]
The following lemma follows from the arguments in  \cite[Sections 8--9]{DGL2}:

\begin{lemma}\label{propconvofdubrovinflows}
If $\mathcal{S}^{(r)}$, $\mathcal{S}$ are as above, and the initial data converge, i.e.\ $\varphi_{\mathfrak{m}}^{(r)} \to \varphi_m$ for all $m$, then the functions $Q^{(r)}(t)$ converge uniformly on compacts, i.e.\ there is a function $Q(t)$ such that for all $T< \infty$,
\[
\lim_{r\to\infty} \max_{-T \le t \le T} \lvert Q(t) - Q^{(r)}(t) \rvert = 0.
\]
\end{lemma}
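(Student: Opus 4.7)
My plan is to treat the assertion as a two-stage argument: a reduction via the trace formula to term-by-term convergence of the Dirichlet data, followed by a continuous dependence result for the Dubrovin flow that is uniform in $r$. The uniform gap-band conditions (i)--(iii) are exactly what is needed to control the infinite product appearing in the Dubrovin vector field.

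\emph{Stage 1 (trace formula tail).} Since $\mu_\mathfrak{m}^{(r)}(x) \in [E_\mathfrak{m}^{-,(r)}, E_\mathfrak{m}^{+,(r)}]$, each summand of the trace formula satisfies
\[
\bigl| E_\mathfrak{m}^{+,(r)} + E_\mathfrak{m}^{-,(r)} - 2\mu_\mathfrak{m}^{(r)}(x) \bigr| \le \gamma_\mathfrak{m}^{(r)} \le \ve_1 e^{-\kappa|\mathfrak{m}|},
\]
uniformly in $r$ and $x$. The bound is summable over $\mathfrak{m}$, so for every $\delta>0$ there is an $M$ such that the contribution from $|\mathfrak{m}|>M$ to $Q^{(r)}(x)$ is less than $\delta$ uniformly in $r$ and in $x \in [-T,T]$. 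Combined with $E_0^{(r)}\to E_0$ and the convergence $E_\mathfrak{m}^{\pm,(r)}\to E_\mathfrak{m}^\pm$ that is part of the hypothesis on $\cS^{(r)}\to \cS$, the problem reduces to showing, for each fixed $\mathfrak{m}$, that $\mu_\mathfrak{m}^{(r)}(x) \to \mu_m(x)$ uniformly on $[-T,T]$.

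\emph{Stage 2 (uniform bounds on the Dubrovin field).} For $\mathfrak{m}'\neq \mathfrak{m}$, both $\mu_{\mathfrak{m}'}^{(r)}$ and $E_{\mathfrak{m}'}^{\pm,(r)}$ lie in the band $[E_{\mathfrak{m}'}^{-,(r)}, E_{\mathfrak{m}'}^{+,(r)}]$, so each factor of the product satisfies
\[
\left| \frac{(E_{\mathfrak{m}'}^{-,(r)} - \mu_\mathfrak{m}^{(r)})(E_{\mathfrak{m}'}^{+,(r)} - \mu_\mathfrak{m}^{(r)})}{(\mu_{\mathfrak{m}'}^{(r)} - \mu_\mathfrak{m}^{(r)})^2} - 1 \right| \le C\,\frac{\gamma_{\mathfrak{m}'}^{(r)}}{\dist([E_\mathfrak{m}^{-,(r)},E_\mathfrak{m}^{+,(r)}],[E_{\mathfrak{m}'}^{-,(r)},E_{\mathfrak{m}'}^{+,(r)}])}.
\]
Condition (ii) says exactly that this ratio is dominated by $\bigl(\ve_1 e^{-\kappa|\mathfrak{m}'|}\bigr)^{3/4}$, which is summable in $\mathfrak{m}'$. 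Therefore the infinite product converges and is bounded above and below by constants independent of $r$ and of the angular variables, and the same estimate shows that the field is Lipschitz in each $\mu_{\mathfrak{m}'}^{(r)}$ with a modulus summable in $\mathfrak{m}'$.

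\emph{Stage 3 (convergence of the flows).} Fix $\mathfrak{m}$ and write the integral form of the ODE for $\varphi_\mathfrak{m}^{(r)}$. Split the product over $\mathfrak{m}'\neq \mathfrak{m}$ into a finite part $|\mathfrak{m}'|\le M$ and a tail; the estimate from Stage 2 shows that the tail contributes an error tending to zero as $M\to\infty$, uniformly in $r$ and in the angular variables. On the finite part, the field depends continuously on finitely many parameters $E_{\mathfrak{m}'}^{\pm,(r)}$ (which converge by hypothesis) and finitely many angles $\varphi_{\mathfrak{m}'}^{(r)}$. A Gronwall estimate for the coupled finite system, together with the assumed convergence of initial data, yields uniform convergence of $\varphi_{\mathfrak{m}'}^{(r)}(x)$ on $[-T,T]$ for every $|\mathfrak{m}'|\le M$. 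Letting $M\to\infty$ and combining with Stage 1 gives the uniform convergence of $Q^{(r)}(x)$ on compacts.

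The main obstacle is Stage 3: the Dubrovin system is infinite-dimensional and fully coupled through the $\mu_{\mathfrak{m}'}^{(r)}$ appearing in every vector field. What makes Gronwall viable is precisely the combination of exponentially small gaps (i) and the polynomial band separation (ii): the ratio $\gamma_{\mathfrak{m}'}/\dist$ is exponentially small in $|\mathfrak{m}'|$, so the coupling is a uniformly summable Lipschitz perturbation of a finite system. Condition (iii) provides the analogous input for the $\mathfrak{m}'=0$ factor $(\mu_\mathfrak{m}^{(r)} - E_0^{(r)})$ and keeps the vector field away from degeneracies.
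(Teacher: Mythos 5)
Your argument is a reasonable self-contained account, and it is genuinely more explicit than what the paper provides: the paper dispatches this lemma with a single reference to \cite[Sections 8--9]{DGL2} and gives no proof at all. Your three stages — (1) trace-formula tail estimate, (2) uniform control of the Dubrovin product via the gap-band conditions, (3) Gronwall for the coupled ODE system — are the same ingredients the cited reference uses, so this is the same route but written out rather than cited.

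A caution on Stage 3, which is the only place something could go wrong. Your truncation argument commits you to a double limit: first $r\to\infty$ for fixed $M$, then $M\to\infty$. The Gronwall estimate you invoke for the finite system gives a bound of the form
\[
\limsup_{r\to\infty}\ \sup_{\lvert\mathfrak{m}\rvert\le M}\ \sup_{\lvert x\rvert\le T}\ \lvert \varphi_\mathfrak{m}^{(r)}(x)-\varphi_\mathfrak{m}(x)\rvert \;\le\; \epsilon_M\, T\, e^{L_M T},
\]
where $\epsilon_M$ is the uniform tail error from truncating the product and $L_M$ is the Lipschitz constant of the truncated field on the $\lvert\mathfrak{m}'\rvert\le M$ block. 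This only closes if $L_M$ stays bounded as $M\to\infty$. That is in fact the case here — your observation that the Lipschitz modulus in each $\varphi_{\mathfrak{m}'}$ is $O(\gamma_{\mathfrak{m}'}/\eta_{\mathfrak{m},\mathfrak{m}'})$, hence summable, gives $L_M \le L_\infty < \infty$ — but you should say so explicitly, since this is precisely where the ``uniformly summable Lipschitz perturbation'' phrase must be cashed out; without the uniform bound on $L_M$ the truncation scheme would be vacuous. Also worth a sentence: the comparison is between the true trajectories of the \emph{full} infinite systems for $\cS^{(r)}$ and $\cS$, not between trajectories of the truncated systems, so the decomposition of $\partial_x(\varphi_\mathfrak{m}^{(r)}-\varphi_\mathfrak{m})$ should be written out as (tail error for $F^{(r)}$) + ($r$-convergence of the truncated field) + (Lipschitz term on $\lvert\mathfrak{m}'\rvert\le M$) + (tail error for $F$); as written your text slides between the full and truncated systems. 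These are bookkeeping matters, not conceptual gaps, and the approach is sound.
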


Further, we characterize isospectrality in terms of comb domains and averages:

\begin{lemma} 
Let $V, Q \in \mathcal{P}(\omega,\ve,\kappa_0)$. The operators $H_V$ and $H_Q$ are isospectral if and only if
\begin{equation}\label{averagesQVequal}
\bbE(Q) = \bbE(V)
\end{equation}
and
\begin{equation}\label{hmVQequal}
h_m(Q) = h_m(V), \qquad  \forall m\in\zv.
\end{equation}
\end{lemma}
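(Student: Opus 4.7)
The plan is to route both directions through the equality $w_V \equiv w_Q$ on the common resolvent set, interpreting the comb domain as the image $-iw(\bbC\setminus[E_0,\infty))$ and reading $\bbE(V)$ off the coefficient of $|z|^{-1/2}$ in the asymptotic expansion \eqref{eqn2term}.

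For the forward implication, suppose $\sigma(H_V) = \sigma(H_Q) =: \cS$, so in particular $E_0(V) = E_0(Q)$. Both $V,Q \in \mathcal{P}(\omega,\ve,\kappa_0)$ have spectra that are homogeneous by \cite{DGL3} and the corresponding operators are reflectionless on $\cS$ (from Kotani theory, since $\sigma_{\rm ac}=\cS$ and the Lyapunov exponent vanishes on $\cS$). As recalled in the introduction, once the Lyapunov exponent vanishes on the spectrum, the harmonic function $-\Re w$ on $\bbC\setminus\cS$ coincides with the Martin function of the Denjoy domain $\bbC\setminus\cS$ normalized by \eqref{eqn2term}, so it depends only on $\cS$; the companion $\Im w$ is (an integral transform of) the density of states, which for reflectionless operators is likewise determined by $\cS$ alone. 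Hence $w_V\equiv w_Q$ on $\bbC\setminus\cS$. Reading off the image gives $h_m(V)=h_m(Q)$ for all $m$, while matching the subleading terms of \eqref{eqn2term} as $z\to-\infty$ yields $\bbE(V)=\bbE(Q)$.

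For the converse, suppose \eqref{averagesQVequal} and \eqref{hmVQequal} hold. Since the slit bases $m\omega$ are the same for $V$ and $Q$, equality of heights gives $\Omega_V=\Omega_Q=:\Omega$. By Lemma~\ref{lemmaconformalmaps}(v), the normalized conformal bijection $\Theta:\bbC_+\to\Omega$ satisfying \eqref{normalizationconformal} is unique, so $\Theta_V=\Theta_Q$. Via the change of variables \eqref{eqnMOtransform}, this says $w_V(E_0(V)+\zeta^2)=w_Q(E_0(Q)+\zeta^2)$ for $\zeta\in\bbC_+$. Taking $\zeta=it$ with $t\to+\infty$ and expanding using \eqref{eqn2term} gives
\[
-t+\frac{E_0(V)-\bbE(V)}{2t}+o(t^{-1}) = -t+\frac{E_0(Q)-\bbE(Q)}{2t}+o(t^{-1}),
\]
so under $\bbE(V)=\bbE(Q)$ we conclude $E_0(V)=E_0(Q)$; together with $\Omega_V=\Omega_Q$, this forces $\cS(V)=\cS(Q)$.

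The main obstacle is the forward direction, which relies on two structural facts about the class $\mathcal{P}(\omega,\ve,\kappa_0)$: homogeneity of the spectrum and the resulting reflectionless property, which together upgrade the geometric identity $\Omega_V=\Omega_Q$ to the analytic identity $w_V\equiv w_Q$. Both are available by citation (\cite{DGL3} and Kotani theory), so no new technical work is required; the remainder of the argument is an elementary comparison of uniquely normalized conformal maps and their asymptotics at $-\infty$.
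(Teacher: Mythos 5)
Your proof is correct and in the same spirit as the paper's. The forward direction fleshes out the paper's single-sentence claim that isospectral operators in $\cP(\omega,\ve,\kappa_0)$ share the same Marchenko--Ostrovskii map $w$: you supply the justification via homogeneity, reflectionlessness, and the Martin-function characterization of $-\Re w$ recalled in the introduction. (The separate appeal to $\Im w$ being determined is harmless but redundant: once $\Re w$ is fixed, harmonic conjugacy together with the normalization \eqref{eqn2term} at $-\infty$ already pins down $w$.) For the converse, the paper observes that $\Theta_V=\Theta_Q$ determines $\cS$ only up to a shift $c$, places $Q-c$ on the isospectral torus of $V$, and invokes ``the previous proposition'' (really the forward implication, restated in the proposition that follows) to force $\bbE(Q-c)=\bbE(V)$ and thus $c=0$. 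You instead expand $\Theta_V(it)=\Theta_Q(it)$ directly via the two-term asymptotics to read off $E_0(V)-\bbE(V)=E_0(Q)-\bbE(Q)$, which together with $\bbE(V)=\bbE(Q)$ gives $E_0(V)=E_0(Q)$, hence $w_V\equiv w_Q$ and $\cS(V)=\cS(Q)$. Your variant is a touch more self-contained and avoids the paper's awkward cross-reference; the mathematical content of the two arguments is the same.
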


\begin{proof}
If $H_V, H_Q$ are isospectral, they have the same map $w$. From \eqref{eqn2term}, we obtain equality of averages, and from the comb domain $w(\bbC_+)$ we obtain equality of slits.

For the converse, note that \eqref{hmVQequal} implies that $V$ and $Q$ have the same Marchenko--Ostrovskii map $\Theta$. The spectrum is determined by $\Theta$ up to an additive constant (we don't yet know that they have the same minimum of the spectrum); therefore, $\cS: = \sigma(H_V) = \sigma(H_{Q-c})$ for some $c\in \mathbb{R}$. This implies that $Q-c \in \mathcal{R}(\cS)$ and, by the previous proposition, that $\bbE (Q-c) = \bbE(V)$. Comparing with \eqref{averagesQVequal}, we conclude that $c=0$.
\end{proof}

Further, we can conclude the following:

\begin{prop}
Let $V\in \mathcal{P}(\omega,\ve,\kappa_0)$ and denote $\cS = \sigma(H_V)$. Let $Q \in \mathcal{R}(\cS)$. Then $Q \in \mathcal{P}(\omega,\sqrt{2\ve},\kappa_0/2)$ and \eqref{averagesQVequal}, \eqref{hmVQequal} hold.
\end{prop}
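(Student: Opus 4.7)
The plan is to realize $Q$ as a uniform-on-compacts limit of $T_r$-periodic potentials $Q^{(r)} \in \mathcal{R}(\cS^{(r)})$, where $\cS^{(r)} = \sigma(H_{V^{(r)}})$, and then extract analytic control of $Q$ by passing to the limit in uniform Fourier bounds on $Q^{(r)}$.

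First I would set up the approximating spectra. Let $V^{(r)} \in \cP(\omega^{(r)},\ve,\kappa_0)$ be the canonical periodic approximants defined by \eqref{eqnPeriodicApproximantVr}. By Theorem~\ref{thmtildeb}(i), the gap sizes of $V^{(r)}$ satisfy the uniform bound
\[
\gamma_{\mathfrak{m}}^{(r)} \le 2\ve \exp\Bigl(-\tfrac{\kappa_0}{2} \lvert \mathfrak{m}\rvert\Bigr),
\]
and, combined with the Diophantine condition \eqref{eq:PAI7-5-8}, the spectra $\cS^{(r)}$ verify the gap--band hypotheses (i)--(iii) preceding Lemma~\ref{propconvofdubrovinflows}. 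Moreover, as in the proof of Proposition~\ref{propConthm}, one has $E_{m}^{\pm,(r)} \to E_{m}^{\pm}$ for each $m$.

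Given $Q \in \mathcal{R}(\cS)$, I parametrize its Dirichlet data at $x=0$ by angular variables $\varphi_m \in \TT$. Using the convergence $[E^{-,(r)}_\mathfrak{m}, E^{+,(r)}_\mathfrak{m}] \to [E^-_m, E^+_m]$, for each $r$ one selects angular variables $\varphi^{(r)}_\mathfrak{m}$ parametrizing Dirichlet data in $\cD(\cS^{(r)})$ with $\varphi^{(r)}_\mathfrak{m} \to \varphi_m$ as $r \to \infty$ for each fixed $m$. Let $Q^{(r)} \in \mathcal{R}(\cS^{(r)})$ be the associated periodic potentials. By Lemma~\ref{propconvofdubrovinflows}, $Q^{(r)}$ converges uniformly on compacts to some $\tilde Q$; continuity of Dirichlet eigenvalues under uniform-on-compacts limits of potentials shows that $\tilde Q$ carries the prescribed Dirichlet data of $Q$ at $x=0$, and homogeneity of $\cS$ together with the Sodin--Yuditskii parametrization of the isospectral torus yields $\tilde Q = Q$.

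To obtain the quantitative conclusion, I apply Theorem~\ref{thmtildeb}(ii) to each $Q^{(r)}$. Since $Q^{(r)}$ is isospectral to $V^{(r)}$, its gap sizes coincide with those of $V^{(r)}$ and satisfy the same uniform bound. For $\ve$ below a suitable threshold depending on $a_0$, $b_0$, $\kappa_0$, this yields the uniform Fourier estimate
\[
\lvert c^{(r)}(\mathfrak{m}) \rvert \le \sqrt{2\ve} \exp\Bigl(-\tfrac{\kappa_0}{2} \lvert \mathfrak{m} \rvert\Bigr)
\]
for $Q^{(r)}$. By Lemma~\ref{lemmaUniformCompacts}, the uniform-on-compacts convergence $Q^{(r)} \to Q$ combined with this uniform exponential majorant forces the pointwise convergence $c^{(r)}(\mathfrak{m}) \to c(m)$ in the limit, and the bound persists, giving $Q \in \cP(\omega,\sqrt{2\ve},\kappa_0/2)$. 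The identities \eqref{averagesQVequal} and \eqref{hmVQequal} then follow from the lemma preceding this proposition, applied in the enlarged class $\cP(\omega,\sqrt{2\ve},\kappa_0/2)$, which contains both $V$ and $Q$.

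The main obstacle lies in the identification $\tilde Q = Q$ at the end of the Dubrovin-flow step, for which one must check convergence of Dirichlet data of $Q^{(r)}$ to those of $\tilde Q$ and invoke uniqueness on the homogeneous spectrum $\cS$. A secondary technical point is the bookkeeping of constants in the application of Theorem~\ref{thmtildeb}(ii): the hypothesis $\kappa_0' \ge 4\kappa_0$ must be reconciled with the exponent $\kappa_0/2$ appearing in the gap bound, which in practice means applying Theorem~\ref{thmtildeb}(ii) with a rescaled base analyticity, or extracting the precise constants from its proof.
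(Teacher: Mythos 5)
Your overall strategy matches the paper's: approximate $Q$ by periodic $Q^{(r)} \in \cR(\cS^{(r)})$ via Dubrovin flows with converging initial data, establish uniform Fourier bounds on $Q^{(r)}$, and pass to the limit. However, there is a genuine gap at the crucial quantitative step.

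You invoke Theorem~\ref{thmtildeb}(ii) to get the uniform exponential bound on the Fourier coefficients $c^{(r)}(\mathfrak{m})$ of $Q^{(r)}$. But Theorem~\ref{thmtildeb}(ii) carries the standing hypothesis of Theorem~\ref{thmtildeb}, namely that the potential already lies in $\cP(\tilde\omega,\ve,\kappa_0,\alpha_0)$ for some subexponential parameter $\alpha_0 \in [\tfrac 12,1]$ and uniform $\ve,\kappa_0$. For the approximants $V^{(r)}$ this is automatic from the construction, but for $Q^{(r)}$ — defined only as a point on the isospectral torus $\cR(\cS^{(r)})$ — there is no a priori membership in a uniform small analytic class. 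What you do know a priori (from fact (g) in Section~3) is that $Q^{(r)}$ has exponentially decaying Fourier coefficients with constants depending on the period $T_r$, which blow up as $r\to\infty$; this is exactly the obstruction that Theorem~\ref{thmtildeb} and Theorem~\ref{thmtildef} exist to overcome. So you cannot simply cite Theorem~\ref{thmtildeb}(ii) here; you need either the connectedness argument used to prove Theorem~\ref{thmtildef}(ii), or, as the paper does, the constructive estimates from the Dubrovin-flow proof of Theorem $I$ in \cite{DGL2}, where the Fourier bounds $\lvert d^{(r)}(m)\rvert \le \sqrt{2\ve}\,\exp(-\tfrac{\kappa_0}{2}\lvert m\rvert)$ are derived directly from the flow rather than from an inverse spectral black box. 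This is not a bookkeeping issue with $\kappa'_0 \ge 4\kappa_0$, which you flag as secondary; it is a missing a priori hypothesis.

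Relatedly, your secondary concern about the constant $\kappa'_0\ge 4\kappa_0$ versus the decay rate $\kappa_0/2$ from Theorem~\ref{thmtildeb}(i) is real: even setting aside the a priori issue, Theorem~\ref{thmtildeb}(ii) applied with the available gap decay rate does not return the constants $\sqrt{2\ve}$ and $\kappa_0/2$ appearing in the proposition. This confirms that the paper's proof does not run through Theorem~\ref{thmtildeb}(ii) as a black box; it extracts the estimates from the explicit construction in \cite{DGL2}, which you would need to invoke or reproduce. Once uniform Fourier bounds on $Q^{(r)}$ are in hand, your use of Lemma~\ref{lemmaUniformCompacts} and of Prop.~\ref{propMOconvergence} to get \eqref{hmVQequal}, together with $\bbE(Q^{(r)}) = \bbE(V^{(r)})$ to get \eqref{averagesQVequal}, is sound and matches the paper.
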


\begin{proof}
This proposition follows from the arguments in the proof of Theorem $I$ in \cite{DGL2}. The proof given there starts from the periodic approximants $V^{(r)}(x) = U(\omega^{(r)} x)$ and constructs certain periodic potentials $Q^{(r)}(x)$ (denoted by $Q^{(r)}(x,\gamma^{(r)})$ there) which are isospectral with $V^{(r)}(x)$. This implies that $Q^{(r)}$ and $V^{(r)}$ have the same Marchenko--Ostrovskii slits $h_m^{(r)}$, since the slits are determined by the spectrum. Moreover, $Q^{(r)}$ are of the form
\[
Q^{(r)}(x) = \sum_{m\in\zv} d^{(r)}(m) e^{2\pi i m \omega^{(r)} x}
\]
and it is shown that
\[
\lvert d^{(r)}(m) \rvert \le \sqrt{2\ve} \exp\left( -\frac{\kappa_0}2 \lvert m\rvert \right), \quad m\in\zv
\]
and that $Q^{(r)}$ converge to $Q$ in the sense that $\lim_{r\to\infty} d^{(r)}(m) \to d(m)$ for each $m$ and
\[
Q(x) = \sum_{m\in\zv} d(m) e^{2\pi i m\omega x}.
\]
By Prop.~\ref{propMOconvergence}, this suffices to conclude
\[
h_m(Q) = \lim_{r\to\infty} h_m^{(r)} = h_m(V).
\]
Isospectrality of $Q^{(r)}$ with $V^{(r)}$ implies that $\bbE(Q^{(r)}) = \bbE(V^{(r)})$, so Lemma~\ref{lemmaUniformCompacts} implies \eqref{averagesQVequal}.
\end{proof}

\begin{prop} Let $\ve^{(0)}(a_0,b_0,\kappa_0)$ be as in Theorem~\ref{thmtildef}. Let $h_m$, $m\in \mathbb{Z}^\nu$, obey the conditions
\[
h_m \le \ve' \exp\left( - \kappa \lvert m\rvert \right)
\]
with $\ve' < \ve^{(0)}$ and $\kappa \ge 5 \kappa_0$. Assume that the angular Dirichlet data $\varphi_m$ are provided.

Then there exists a quasi-periodic operator $V \in \mathcal{P}(\omega,(\ve')^{1/4},\frac {\kappa}3)$ with slits $h_m(V) = h_m$ and angular Dirichlet data $\varphi_m(V) = \varphi_m$.
\end{prop}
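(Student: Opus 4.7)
The plan is to construct $V$ by periodic approximation, in the spirit of \cite{DGL2}: produce periodic potentials $Q^{(r)}$ of period $T^{(r)}$ associated with rational approximants $\omega^{(r)}\to\omega$, each with (essentially) the prescribed slits and Dirichlet data, and pass to the limit using Lemma~\ref{propconvofdubrovinflows} and Lemma~\ref{lemmaUniformCompacts}.

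\textbf{Construction for each $r$.} Fix a canonical rational sequence $\omega^{(r)}\to\omega$ satisfying the box Diophantine condition \eqref{eq:PAI7-5-8}--\eqref{eq:PAIombasicTcondition} with uniform constants $a_0,b_0$ and $\ol R_0^{(r)}\to\infty$, so that for every fixed $m\in\zv$ the coset $\mathfrak{m}(m,r):=m+\mathfrak{N}(\omega^{(r)})$ eventually has $m$ as its unique minimal-norm representative. Define coset-indexed slits by $h_{\mathfrak{m}}^{(r)}:=h_{m^*(\mathfrak{m})}$, where $m^*(\mathfrak{m})\in\mathfrak{m}$ is a fixed minimal-norm element; then $h_{\mathfrak{m}}^{(r)}\le \ve'\exp(-\kappa|\mathfrak{m}|)$. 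By the inverse spectral theory of periodic operators (facts (b) and (d) of Section~\ref{sectionMOperiodic}), these slits determine a $T^{(r)}$-periodic spectrum $\cS^{(r)}$ (normalized by $\min\cS^{(r)}=0$) and an even periodic potential $V_0^{(r)}$ realizing it. Theorem~\ref{thmtildef}(ii) then gives $V_0^{(r)}\in\mathcal{P}(\omega^{(r)},(\ve')^{1/3},\kappa/3)$.

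\textbf{Dubrovin flow and passage to the limit.} Prescribe initial angular Dirichlet data $\varphi_{\mathfrak{m}}^{(r)}(0):=\varphi_{m^*(\mathfrak{m})}$ and evolve the Dubrovin flow for $\cS^{(r)}$, recovering the unique periodic $Q^{(r)}(x)$ via the trace formula. Since $Q^{(r)}$ is isospectral with $V_0^{(r)}$, Theorem~\ref{thmtildef}(ii) again gives $Q^{(r)}\in\mathcal{P}(\omega^{(r)},(\ve')^{1/3},\kappa/3)$, uniformly in $r$. Once the uniform gap-band estimates (i)--(iii) of Section~\ref{sectionTranslation} are verified (see below), Lemma~\ref{propconvofdubrovinflows} yields uniform convergence $Q^{(r)}\to Q$ on every compact $x$-interval. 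Writing $Q^{(r)}(x)=U^{(r)}(\omega^{(r)}x)$, the uniform exponential Fourier bounds make $\{U^{(r)}\}$ equicontinuous on $\TT^\nu$, and Lemma~\ref{lemmaUniformCompacts} identifies the limit as $Q(x)=U(\omega x)$ with $|\hat U(m)|\le(\ve')^{1/3}e^{-(\kappa/3)|m|}$. Setting $V:=Q$ gives $V\in\mathcal{P}(\omega,(\ve')^{1/3},\kappa/3)\subset\mathcal{P}(\omega,(\ve')^{1/4},\kappa/3)$. For the slit matching, Proposition~\ref{propMOconvergence} applied to the Marchenko--Ostrovskii domains of $Q^{(r)}$ (whose convergence off the real axis is secured by Proposition~\ref{prop:wconvoffboundary}) gives $h_m(V)=\lim_{r\to\infty}h_{\mathfrak{m}(m,r)}^{(r)}=h_m$, since eventually $m$ is the unique minimal-norm representative of $\mathfrak{m}(m,r)$. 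The identification $\varphi_m(V)=\varphi_m$ follows analogously from convergence of the Dubrovin trajectories together with the same eventual coincidence of cosets with singletons.

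\textbf{Main obstacle.} The technical crux is verifying the uniform gap-band estimates (i)--(iii) of Section~\ref{sectionTranslation} for the periodic spectra $\cS^{(r)}$. Estimate (i), exponential decay of $\gamma_{\mathfrak{m}}^{(r)}$, follows from the exponential decay of $h_{\mathfrak{m}}^{(r)}$ via Lemma~\ref{lemma:productloglogm} and the polynomial bound \eqref{productAmbound1}. Estimate (iii) on the distance from the first gap to the bottom of the spectrum follows from the near-free structure $E_{\mathfrak{m}}^{-,(r)}\approx(\mathfrak{m}\omega^{(r)})^2$, itself a consequence of the smallness of $V_0^{(r)}$. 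The delicate estimate (ii), separating distinct gaps, requires that perturbations from the free spectrum respect the Diophantine separation of the $\mathfrak{m}\omega^{(r)}$; this is precisely the quantitative content of the multiscale analysis of \cite{DG1,DGL1,DGL2} applied uniformly across the family of approximants, and it is what the smallness threshold $\ve'<\ve^{(0)}$ is calibrated to ensure. Once these uniform bounds are in hand, the remainder is routine compactness and a limit passage.
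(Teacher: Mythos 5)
Your proposal follows essentially the same route as the paper: construct periodic potentials with the prescribed slits and angular Dirichlet data, apply Theorem~\ref{thmtildef}(ii) to place them uniformly in $\cP(\omega^{(r)},(\ve')^{1/3},\kappa/3)$, pass to the limit via Lemma~\ref{propconvofdubrovinflows} and compactness in $C(\bbT^\nu)$, and invoke Prop.~\ref{propMOconvergence} for slit matching. Your extra paragraph making explicit the verification of the gap-band hypotheses (i)--(iii) for the Dubrovin-flow lemma is a useful elaboration of a step the paper leaves implicit, but it is the same argument.
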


\begin{proof}
For the rational approximants $\omega^{(r)}$, we know that this set of slits $h_m$ is allowed, i.e. there are periodic potentials $V^{(r)}$ which have the set of slits equal to $h_m$ and angular Dirichlet data $\varphi_m$. Moreover, by Theorem~\ref{thmtildef}, these potentials are of the form
\[
V^{(r)}(x) = U^{(r)}(\omega^{(r)}x), \qquad U^{(r)}(\alpha)=\sum_{m \in \mathbb{Z}^\nu} c^{(r)}(m)  \exp(2\pi i m \alpha), \qquad \lvert c^{(r)}(m) \rvert \le (\ve')^{1/3} \exp( - \frac {\kappa}5 \lvert m\rvert).
\]
By Lemma~\ref{propconvofdubrovinflows}, these potentials converge uniformly on compacts to a potential $V$.

On the other hand, if a subsequence of $U^{(r)}$ converges uniformly to a function $U$, then $V(x) = U(\omega x)$, which determines the function $U$ uniquely. Thus, every convergent subsequence of $U^{(r)}$ converges to the same limit, and by precompactness of the set of functions $U^{(r)}$ in $C(\mathbb{T}^\nu,\mathbb{R})$, we conclude that the sequence $U^{(r)}$ does in fact converge uniformly to a limit $U$.

Clearly, $U \in C(\mathbb{T}^\nu,\mathbb{R})$ is of the form
\[
U(\alpha) = \sum_{m \in \mathbb{Z}^\nu} c(m)  \exp(2\pi i m \alpha), \qquad \lvert c(m) \rvert \le (\ve')^{1/4} \exp( - \frac {\kappa}3 \lvert m\rvert).
\]
This provides us with the desired quasi-periodic operator $V(x)=U(\omega x)$ which, by Prop.~\ref{propMOconvergence}, obeys $h_m(V) = \lim_{r\to\infty} h_m(V^{(r)}) = h_m$.
 
Moreover, $V$ is given by the Dubrovin flow corresponding to the spectrum $\mathcal{S}$ determined by the slits $h_m$, with initial conditions $\varphi_m$, so we conclude that $\varphi_m(V) = \varphi_m$.
\end{proof}

\begin{proof}[Proof of Theorem~\ref{thmCombInverse}]
Since the spectrum is homogeneous, the isospectral torus is parametrized by the Dirichlet data \cite{SY2}; by the previous proposition, any $Q \in \cR(\cS)$ is in $ \mathcal{P}(\omega,(\ve')^{1/4},\frac {\kappa}3)$.
\end{proof}

We now know that for each set of exponentially decaying $h_m$ and each set of angular Dirichlet data $\varphi_m$, there is a unique potential $V$. In fact, the above methods also imply continuity of this construction: 

\begin{prop}
Consider $V \in \mathcal{P}(\omega,\ve,\kappa_0)$ and a sequence of $V_n\in \mathcal{P}(\omega,\ve,\kappa_0)$. If $\bbE(V_n) \to \bbE(V)$, $h_m(V_n) \to h_m(V)$ and $\varphi_m(V_n) \to \varphi_m(V)$ for all $m$, then $V_n \to V$.
\end{prop}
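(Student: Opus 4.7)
The plan is a compactness-and-uniqueness argument. Write $V_n(x) = U_n(\omega x)$ and $V(x) = U(\omega x)$ with $U_n, U \in C(\bbT^\nu,\bbR)$. The uniform Fourier bound $\lvert \hat U_n(m)\rvert \le \ve e^{-\kappa_0 \lvert m\rvert}$ makes $\{U_n\}$ uniformly bounded and equicontinuous on $\bbT^\nu$, hence precompact by Arzel\`a--Ascoli. It therefore suffices, by the standard subsequence criterion, to show that every uniformly convergent subsequence $U_{n_k} \to U_*$ satisfies $U_* = U$; an appeal to Lemma~\ref{lemmaUniformCompacts} will then deliver $V_n \to V$.

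Fix such a subsequence and set $V_*(x) = U_*(\omega x) \in \mathcal{P}(\omega,\ve,\kappa_0)$. By Lemma~\ref{lemmaUniformCompacts}, $V_{n_k} \to V_*$ uniformly on compacts, so Prop.~\ref{prop:wconvoffboundary} gives $w_{V_{n_k}} \to w_{V_*}$ locally uniformly on $\bbC \setminus \bbR$. Prop.~\ref{propMOconvergence} then transfers the comb slit heights to the limit: $h_m(V_*) = \lim_k h_m(V_{n_k}) = h_m(V)$ for every $m$, and the asymptotic \eqref{eqn2term} read off $w_{V_*}$ at $-\infty$ gives $\bbE(V_*) = \lim_k \bbE(V_{n_k}) = \bbE(V)$. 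For the angular Dirichlet data, Lemma~\ref{lemma:mfunctionconv} yields convergence of the Weyl $m$-functions $m_{V_{n_k}}(0;z) \to m_{V_*}(0;z)$ locally uniformly on $\bbC \setminus \bbR$; since each pair $(\mu_m,\sigma_m)$ is read off from the pole/branch pattern of the meromorphic continuation of $m(0;z)$ across the $m$-th gap, passing to the limit yields $\varphi_m(V_*) = \lim_k \varphi_m(V_{n_k}) = \varphi_m(V)$.

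At this point $V_*$ and $V$ have the same average, the same comb slit heights, and the same angular Dirichlet data, and both lie in $\mathcal{P}(\omega,\ve,\kappa_0)$. Together with equality of the comb domains this forces $\sigma(H_{V_*}) = \sigma(H_V) =: \cS$, so both potentials lie in the isospectral torus $\cR(\cS)$. By the Dubrovin-flow construction in the proof of Theorem~\ref{thmCombInverse}, a potential in $\cR(\cS) \cap \mathcal{P}(\omega,\cdot,\cdot)$ is uniquely determined by its initial Dirichlet data $(\mu_m(0),\sigma_m(0))$; equivalently, by homogeneity of $\cS$, one may invoke the Sodin--Yuditskii parametrization of $\cR(\cS)$ by $\cD(\cS)$. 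Either route gives $V_* = V$, hence $U_* = U$, and so the full sequence $U_n \to U$, which is the desired conclusion.

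The step I expect to require the most care is the continuity of the angular Dirichlet data $\varphi_m$: the slit heights and average are read directly off $w$ via Prop.~\ref{propMOconvergence} and \eqref{eqn2term}, but $\varphi_m$ involves the finer pole/sign data of $m(0;z)$ inside each gap. If one prefers to avoid this local analysis, the same conclusion can be extracted from the Dubrovin-flow picture by noting that the value and sign of $(\mu_m,\sigma_m)$ at $x=0$ depend continuously on the flow trajectory and then applying Lemma~\ref{propconvofdubrovinflows} to the approximating sequence.
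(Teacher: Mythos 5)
Your proof is correct, but it takes a genuinely different route from the paper. The paper's argument is direct: from $\bbE(V_n)\to\bbE(V)$ and $h_m(V_n)\to h_m(V)$ it deduces, via the Marchenko--Ostrovskii normalization, that the gap edges $E_m^\pm(V_n)$ converge to $E_m^\pm(V)$, so the spectra $\cS_n$ converge to $\cS$ in the sense of Lemma~\ref{propconvofdubrovinflows}; the uniform gap--band estimates hold automatically on $\cP(\omega,\ve,\kappa_0)$, so that lemma (applied with the sequence $\cS_n$ and initial angles $\varphi_m(V_n)$) gives directly that the trace-formula potentials, i.e.\ the $V_n$ themselves, converge uniformly on compacts to $V$. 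You instead use compactness and uniqueness: Arzel\`a--Ascoli for precompactness of the sampling functions, identification of any subsequential limit $V_*$ through its spectral data, and the parametrization of $\cR(\cS)$ by $\cD(\cS)$ for uniqueness. The paper's route is shorter because it reuses Lemma~\ref{propconvofdubrovinflows}; yours is more modular and avoids the quantitative ODE convergence once the Sodin--Yuditskii bijection is in hand.

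Two points in your write-up deserve tightening. First, for $h_m(V_{n_k})\to h_m(V_*)$ it is cleaner to quote the continuity statement of Theorem~\ref{thmCombDirect} directly once you observe that $U_{n_k}\to U_*$ uniformly on $\bbT^\nu$ gives $V_{n_k}\to V_*$ in $L^\infty(\bbR)$ (not merely on compacts); going through Prop.~\ref{propMOconvergence} as written requires you to also control $E_0^{(n_k)}\to E_0^*$ in order to identify the conformal limit as $\Theta_{V_*}$, which you do not mention. Second, the passage from locally uniform convergence of $m(0;z)$ on $\bbC\setminus\bbR$ to convergence of $(\mu_m,\sigma_m)$ involves continuing across the gap and keeping track of the sheet, which is genuinely delicate at gaps where $\gamma_m$ is small; your second route (reading $(\mu_m,\sigma_m)$ off the Dubrovin-flow picture and invoking Lemma~\ref{propconvofdubrovinflows}) is the safer one and is in effect what the paper does. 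With those two adjustments your argument is complete.
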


\begin{proof} By the same arguments as above, if $\bbE(V_n)$, $h_m(V_n)$ converge, then the spectra $\mathcal{S}_n$ converge; moreover, if the $\mathcal{S}_n$ converge and the $\varphi_m$ converge, then the potentials converge.
\end{proof}

\section{Gap Edges and Differentiability} \label{sectionGaps}

It is clear from general principles that gap edges are $1$-Lipshitz functions of $V$. However, for what follows, we need to establish stronger smoothness properties.

We begin by recalling the gap edge behavior of Weyl solutions \cite[Prop. 2.1]{BDGL1}. Fix $m\in \zv$, $m\omega > 0$, such that the $m$-th gap of $H_V$ is open. For $z\in (E_m^-, E_m^+)$, denote by $\psi_\pm(x;z)$ the Weyl solutions of $H_V$ at $\pm\infty$. Let $E \in \{ E_m^-, E_m^+\}$ be one of the gap edges. There exists a nontrivial formal eigensolution $\psi(x;E)$ of $H_V \psi(x;E) = E \psi(x;E)$ which is a limit of Weyl solutions from the gap, in the following sense: there exist $c_\pm(z) \in \mathbb{C}$ such that
\begin{equation}\label{gapedgeWeylconv} 
\lim_{\substack{z\to E\\ z\in (E_m^-, E_m^+)}} c_\pm(z) \psi_\pm(x;z) = \psi(x;E)
\end{equation}
uniformly in $x$ on compact subsets of $\mathbb{R}$.

Let $V(x) = U(\omega x)$ be our quasiperiodic potential, with $U: \mathbb{T}^\nu \to \mathbb{R}$ its sampling function. For $\beta\in \mathbb{T}^\nu$, let us denote by $\mu_l(\beta)$ the Dirichlet data corresponding to the potential $V_\beta(x) = U(\beta + \omega x)$; at a closed gap we will set $\mu_l = E_l^\pm$.

The following product will be repeatedly useful, for $m\in \zv$ and $z \in [E_m^-, E_m^+]$:
 \[
 \Phi_m(\beta;z) = \frac 12 \sqrt{ \frac 1{E_0 - z} \prod_{\substack{l\neq m \\ l \omega > 0}} \frac{(\mu_l(\beta) - z)^2}{(E_l^- - z)(E_l^+ - z)}}.
 \]
This combination of factors occurs in Craig's product formula for the Green's function, 
\begin{equation}\label{Craigproductformula}
G(x,x;z,V) = \frac 12 \sqrt{\frac 1{E_0 - z}  \prod_{\substack{l  \in \zv \\ l \omega > 0}} \frac{(\mu_l(\omega x) - z)^2}{(E_l^--z)(E_l^+-z)}}
\end{equation}
but there the product includes the term with index $l=m$. Therefore, the formula \eqref{Craigproductformula} can be written as
\begin{equation}\label{GreenformulaPhi}
G(x,x;z,V) = \frac{z-\mu_m(\omega x)}{\sqrt{(E_m^+ - z)(z-E_m^-)}} \Phi_m(\omega x;z).
\end{equation}
We also define a similar product involving the critical values of $w$,
\[
\chi_m(z) = \frac {1}{2\sqrt{z-E_0}}  \left( \prod_{l \neq m} \frac{(\lambda_{l}  - z)^2}{(E_{l}^- -  z)(E_{l}^+  - z)} \right)^{1/2},
\]
so that \eqref{wderivativeproduct} can be rewritten as
\[
w'(z) =  \frac{z - \lambda_{m}}{\sqrt{(E_{m}^+ - z)(z-E_m^-)}} \chi_m(z).
\]

\begin{lemma}\label{lemmaQPeigensolution}
Fix $m\in \zv$, $m\omega > 0$, and assume that the $m$-th gap is open. Let $E \in \{ E_m^-, E_m^+\}$ be one of the gap edges. The squared eigensolution $\psi^2(x;E)$ is quasiperiodic in $x$, with frequency vector $\omega$. When normalized so that
\begin{equation}\label{gapedgesolutionnormalization}
\bbE( \psi^2(x;E) ) = 1,
\end{equation}
it is represented by the product formula
\begin{equation}\label{gapedgesolutionnew}
\psi^2(x;E) = \frac { \lvert \mu_m(\omega x) - E \rvert \Phi_m(\omega x;E) }{\lvert \lambda_m - E \rvert \chi_m(E)}.
\end{equation}
\end{lemma}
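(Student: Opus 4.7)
The plan is to extract $\psi(x;E)^2$ as a limit of the diagonal Green's function $G(x,x;z)$ as $z$ approaches the gap edge $E$ from within the open $m$-th gap, and then to pin down its normalization using the identity $\bbE(G(x,x;z))=w'(z)$ that implicitly underlies the comparison between \eqref{Craigproductformula} and \eqref{wderivativeproduct}.

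\emph{Identification of $\psi^2$ up to a constant.} I write $G(x,x;z)=\psi_+(x;z)\psi_-(x;z)/W(z)$, where $W(z)$ is the $x$-independent Wronskian of the Weyl solutions. By \eqref{gapedgeWeylconv}, as $z\to E$ from within the gap,
\[
c_+(z)c_-(z)W(z)\,G(x,x;z)\longrightarrow\psi(x;E)^2
\]
uniformly on compact subsets of $x$. Substituting the form \eqref{GreenformulaPhi} of the Green's function and observing that the prefactor
\[
K(z)\;:=\;\frac{c_+(z)c_-(z)W(z)}{\sqrt{(E_m^+-z)(z-E_m^-)}}
\]
is $x$-independent, I read off, from the convergence of the $x$-dependent factor $(z-\mu_m(\omega x))\Phi_m(\omega x;z)$ (which has a nontrivial limit as $z\to E$ since $\mu_l$ and $\Phi_m$ are continuous and the product defining $\Phi_m$ is regular at $E$), that $K(z)$ converges to a finite nonzero constant $K$ with
\[
\psi^2(x;E)=K\,(E-\mu_m(\omega x))\Phi_m(\omega x;E).
\]
Since $\mu_m(\omega x)\in[E_m^-,E_m^+]$ and $E$ is an endpoint of the gap, $E-\mu_m(\omega x)$ has constant sign; combining with the positivity of $\psi^2$, there is $K'>0$ such that
\[
\psi^2(x;E)=K'\,\lvert\mu_m(\omega x)-E\rvert\,\Phi_m(\omega x;E).
\]
Because the right-hand side equals a continuous function on $\bbT^\nu$ evaluated at $\omega x$, this also proves that $\psi^2(x;E)$ is quasiperiodic with frequency $\omega$.

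\emph{Fixing the normalization.} To compute $K'$ under \eqref{gapedgesolutionnormalization}, I use the identity $\bbE(G(x,x;z))=w'(z)$. This follows from the spectral theorem (the mean of the spectral measures of $\delta_x$ is the density of states $dN$, so $\bbE(G(x,x;z))=\int(\lambda-z)^{-1}\,dN(\lambda)$) together with the fact that $w'(z)=\int(\lambda-z)^{-1}\,dN(\lambda)$, obtained by differentiating the exponential Herglotz representation of $-iw'$ recalled before \eqref{wderivativeproduct} (equivalently, the Thouless formula). Averaging \eqref{GreenformulaPhi} and comparing with \eqref{wderivativeproduct} then yields
\[
\bbE\bigl((z-\mu_m(\omega x))\Phi_m(\omega x;z)\bigr)=(z-\lambda_m)\,\chi_m(z).
\]
Passing to $z\to E$ inside the gap (both sides are continuous at $E$ since $\chi_m$ and $\Phi_m$ are regular there) and taking absolute values (both sides share a sign because $\mu_m(\omega x),\lambda_m\in[E_m^-,E_m^+]$),
\[
\bbE\bigl(\lvert\mu_m(\omega x)-E\rvert\,\Phi_m(\omega x;E)\bigr)=\lvert\lambda_m-E\rvert\,\chi_m(E),
\]
which identifies $K'=1/(\lvert\lambda_m-E\rvert\chi_m(E))$ and proves \eqref{gapedgesolutionnew}.

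\emph{Main obstacle.} The principal difficulty is justifying the averaging identity $\bbE(G(x,x;z))=w'(z)$ together with the attendant passage to the limit $z\to E$ through the infinite products in $\Phi_m$ and $\chi_m$, and verifying that $K(z)$ really has a finite nonzero limit. The exponential decay of gap sizes from Proposition~\ref{propConthm} and the polynomial bounds on $A(\mathfrak{m})$ from Lemma~\ref{lemma:productloglogm} provide uniform domination of the individual factors in the products near a fixed gap edge, so convergence of the products, interchange of $\bbE$ with the $z$-limit, and convergence $K(z)\to K$ can all be handled by dominated convergence in a neighborhood of $E$.
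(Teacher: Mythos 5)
Your proposal follows essentially the same route as the paper's own proof: identify $\psi^2(x;E)$ up to a constant by matching the gap-edge limit of $G(x,x;z)$ from \eqref{gapedgeWeylconv} against the Craig product formula \eqref{GreenformulaPhi}, then fix the constant via $\bbE(G(x,x;z)) = w'(z)$, cancellation of the shared $\sqrt{(E_m^+-z)(z-E_m^-)}$ factor, and passage to the limit $z\to E$. The minor additional justifications you provide (dominated convergence for the infinite products, the sign/positivity bookkeeping, and the spectral-theorem sketch of $\bbE(G)=w'$, which the paper simply cites to Johnson--Moser) are consistent with the paper's argument and do not change its structure.
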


\begin{proof}
Recall that
\[
G(x,x;z) = \frac{\psi_-(x;z) \psi_+(x;z)}{W(\psi_+(\cdot;z), \psi_-(\cdot;z))}.
\]
It follows from \eqref{gapedgeWeylconv} that
\[
\lim_{\substack{z\to E\\ z\in (E_m^-, E_m^+)}}  \frac{W(\psi_+(\cdot,z), \psi_-(\cdot,z))}{c_-(z) c_+(z)} G(x,x;z) = \psi^2(x;E)
\]
uniformly in $x$ on compact subsets of $\mathbb{R}$. Meanwhile, \eqref{Craigproductformula} implies that
\[
\lim_{\substack{z\to E\\ z\in (E_m^-, E_m^+)}}  \sqrt{(E_m^+-z)(z-E_m^-)} G(x,x;z) =  (\mu_m(\omega x) - E) \Phi_m(\omega x;E)
\]
converges uniformly on compacts, so the two limits must be equal up to a multiplicative constant. Thus, the squared eigensolution is of the form
\begin{equation}\label{gapedgesolution}
\psi^2(x;E) = \frac 1C \lvert \mu_m(\omega x) - E \rvert \Phi_m(\omega x;E)
\end{equation}
for some $C \in (0,\infty)$. The right hand side of \eqref{gapedgesolution} is a continuous function of $\mu(\omega x) \in \mathcal{D}(\cS)$, and the map $\mu: \mathbb{T}^\nu \to \mathcal{D}(\cS)$ is continuous, so $\psi(x;\lambda)^2$ is a quasiperiodic function with frequency $\omega$.

As for normalization, for any $z\in (E_m^-, E_m^+)$, we have
\[
\bbE( G(x,x;z)) = w'(z)
\]
(see \cite{JM}) which can be rewritten as
\[
\int_{\mathbb{T}^\nu} \frac{z-\mu_m(\beta)}{\sqrt{(E_m^+ - z)(z-E_m^-)}} \Phi_m(\beta;z) \, d\beta =  \frac{z-\lambda_m}{\sqrt{(E_m^+ - z)(z-E_m^-)}} \chi_m(z). 
\]
After cancelling the square roots in the denominator, we obtain
\[
\int_{\mathbb{T}^\nu} \lvert E-\mu_m(\beta)\rvert \Phi_m(\beta;E) d\beta =  \lvert E-\lambda_m\rvert \chi_m(E). 
\]
Both sides of this equality converge as $z\to E$, so the equality holds also for $z=E$, and the choice
\[
C = \lvert \lambda_m - E \rvert \chi_m(E)
\]
results in the normalization \eqref{gapedgesolutionnormalization}.
\end{proof}

For now on, we will fix the normalization \eqref{gapedgesolutionnew}, so that  \eqref{gapedgesolutionnormalization} holds. Note that \eqref{gapedgesolutionnew} can be rewritten as
\begin{equation}\label{gapedgesolutionnormalized}
\psi^2(x;E)  = \prod_{\substack{l \in \zv \\ l\omega > 0}} \frac{\mu_l(\omega x) - E}{\lambda_l - E}.
\end{equation}
We will also denote this by $\psi^2(x;E,V)$ when needed. Moreover, we define the set 
\[
D_m = \{ V \in\mathring\cP(\omega,\ve,\kappa_0) \mid \gamma_m(V) = 0\}.
\]
This set is closed, since $\gamma_m$ is a continuous function.

It should be kept in mind that $\psi^2(\beta;E^\pm_m,V)$ depends on $V$ through the gap edge $E^\pm_m$, through all the critical points $\lambda_l$, and through all the Dirichlet data $\mu_l$. Nonetheless, uniform control of the products implies a continuity statement: 

\begin{lemma}
If $V \in \mathring \cP(\omega,\ve,\kappa_0) \setminus D_m$, then
\begin{equation}\label{eqn30jan}
\lim_{\substack{q\to 0 \\ q \in \cB(\omega,\kappa_0)}} \bbE\left( \left\lvert \psi^2(x;E^\pm_m,V+q) - \psi^2(x;E^\pm_m,V) \right\rvert  \right) = 0.
\end{equation}
\end{lemma}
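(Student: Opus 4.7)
The plan is to leverage the product representation \eqref{gapedgesolutionnormalized} of $\psi^2$ and establish uniform convergence of the infinite product as $q \to 0$, then conclude by dominated convergence.

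First, I would fix a neighborhood of $V$ on which the relevant quantities are uniformly controlled. Since $V \notin D_m$ means $\gamma_m(V) > 0$, and $\gamma_m$ is continuous on $\mathring{\cP}(\omega,\ve,\kappa_0)$ (by Theorem~\ref{thmCombDirect} together with the two-sided bound \eqref{heightgapestimate} from Proposition~\ref{propConthm}), there exists $\delta > 0$ such that, for all $q \in \cB(\omega,\kappa_0)$ with $\|q\|_{\infty,\kappa_0} < \delta$, one has $V + q \in \mathring\cP(\omega,\ve,\kappa_0)$ and $\gamma_m(V+q) \geq \gamma_m(V)/2 > 0$. Moreover, Theorem~\ref{thmCombDirect} supplies uniform exponential decay $\gamma_l(V+q) \leq C \exp(-\kappa_0 |l|/5)$ over the neighborhood.

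Second, using $\mu_l(\omega x, V'), \lambda_l(V') \in [E_l^-(V'), E_l^+(V')]$, each factor of \eqref{gapedgesolutionnormalized} satisfies
\[
\left| \frac{\mu_l(\omega x, V') - E_m^\pm(V')}{\lambda_l(V') - E_m^\pm(V')} - 1 \right| \leq \frac{\gamma_l(V')}{|\lambda_l(V') - E_m^\pm(V')|}, \qquad V' := V+q,
\]
and $|\lambda_l(V') - E_m^\pm(V')|$ is bounded below uniformly on the neighborhood by a polynomially decaying function of $|l-m|$, via the gap-band geometry from Proposition~\ref{propConthm} (i.e., the Diophantine condition on $\omega$ combined with uniform gap-size estimates). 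Thus $\sum_{l \neq m} \gamma_l(V')/|\lambda_l(V') - E_m^\pm(V')|$ is uniformly summable on the neighborhood, which simultaneously gives a uniform sup-bound $|\psi^2| \leq M$ and shows that the tail $\prod_{|l|>L}$ approximates $1$ within $\varepsilon$ in sup-norm, uniformly in $x$ and in $q$. This reduces the problem to convergence of the finite truncation $\prod_{|l|\leq L}$.

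Third, for each fixed $l$, every factor in the finite truncation converges as $q\to 0$: the gap edges $E_m^\pm(V+q)\to E_m^\pm(V)$ and the critical points $\lambda_l(V+q)\to \lambda_l(V)$ are continuous functionals of $V$ by Proposition~\ref{propConthm} combined with Proposition~\ref{propMOconvergence}, while $\mu_l(\beta,V+q)\to \mu_l(\beta,V)$ uniformly in $\beta \in \bbT^\nu$ by the continuity of the Dubrovin-flow construction established at the end of Section~\ref{sectionTranslation}; alternatively, the Dirichlet data can be read off as the unique zero of $G(x,x;\,\cdot\,,V')$ in $[E_l^-(V'), E_l^+(V')]$ via \eqref{Craigproductformula}, and their convergence then follows from the local uniform convergence of Green's functions (Lemma~\ref{lemma:mfunctionconv}) by a Hurwitz-type argument. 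Combining uniform convergence of the truncated product with the uniform bound $M$ and the fact that, since $\omega$ has linearly independent entries, $\bbE$ equals the integral over $\bbT^\nu$ against normalized Haar measure, dominated convergence yields \eqref{eqn30jan}.

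The main obstacle is the uniform-in-$\beta$ convergence $\mu_l(\beta,V+q) \to \mu_l(\beta,V)$. Pointwise convergence at rational points of the form $\beta = \omega x$ is essentially immediate from the convergence of the Green's function, but extending this to uniform convergence on $\bbT^\nu$ requires exploiting the uniform Diophantine separation of gap intervals on the neighborhood (to prevent zeros from coalescing or escaping) and the equicontinuity of the Dirichlet divisor in $\beta$ afforded by the analytic sampling-function class; this is precisely where the uniform gap-band geometry from Proposition~\ref{propConthm} is essential.
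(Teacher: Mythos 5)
Your approach is essentially the same as the paper's: rewrite $\psi^2(\beta;E_m^\pm,V)$ as the product \eqref{gapedgesolutionnormalized} over the hull $\beta \in \bbT^\nu$, bound each factor via $\gamma_l / \eta_{l,m}$ to obtain a constant dominating function, verify convergence of each factor, and close with dominated convergence. The substance is correct.

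One comment on the "main obstacle" you flag. You do not actually need uniform-in-$\beta$ convergence of $\mu_l(\beta, V+q)$: once the dominating function is a constant (uniform in $\beta$ and in small $q$), dominated convergence requires only \emph{pointwise} convergence in $\beta$. Moreover, this pointwise convergence holds for every $\beta \in \bbT^\nu$, not just for $\beta$ of the form $\omega x$: for fixed $\beta$, the translated potentials $V_\beta(x) = U(\beta + \omega x)$ and $(V+q)_\beta$ differ by $p(\beta + \omega x)$ where $p$ is the sampling function of $q$, so $\lVert q \rVert_{\infty,\kappa_0} \to 0$ gives $(V+q)_\beta \to V_\beta$ uniformly on $\bbR$, and Lemma~\ref{lemma:mfunctionconv} (or Lemma~\ref{lemmaWeylJointlyContinuous}) gives convergence of the diagonal Green's function, hence of the Dirichlet zero $\mu_l(\beta,\cdot)$ in each gap closure (with a trivial squeeze if the $l$-th gap is closed for $V$). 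Thus the equicontinuity/Diophantine argument you sketch for uniformity, while not wrong in spirit, is superfluous here, as is the reduction to the finite truncation once the constant dominating function is in hand.
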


\begin{proof}
With a slight abuse of notation, we will also denote
\[
\psi^2(\beta;E_m^\pm,V)  = \prod_{\substack{l \in \zv \\ l\omega > 0}} \frac{\mu_l(\beta) - E_m^\pm}{\lambda_l - E_m^\pm}, \qquad \beta \in \bbT^\nu.
\]
For $l\neq m$, the terms of the product are estimated uniformly by
\[
\left\lvert \frac{\mu_l(\beta) - E_m^\pm}{\lambda_l - E_m^\pm} - 1 \right\rvert \le \frac{\gamma_l}{\eta_{l,m}},
\]
so pointwise convergence of Dirichlet data implies, for each $\beta \in \bbT^\nu$,
\[
\lim_{\substack{q\to 0 \\ q \in \cB(\omega,\kappa_0)}} \psi^2(\beta;E^\pm_m,V+q) = \psi^2(\beta;E^\pm_m,V).
\]
The same upper bounds give a (constant) dominating function, so by dominated convergence,
\[
\lim_{\substack{q\to 0 \\ q \in \cB(\omega,\kappa_0)}}  {\int_{\bbT^\nu}} \left\lvert \psi^2(\beta;E^\pm_m,V+q) - \psi^2(\beta;E^\pm_m,V) \right\rvert   \,d\beta = 0,
 \]
but this is precisely \eqref{eqn30jan}, rewritten as an integral over the hull.
\end{proof}

Our next result is that $E_m^\pm$ is differentiable where the $m$-th gap is open:

\begin{lemma}\label{EmpmC1}
For $m \in \zv$ with $m\omega > 0$ and any choice of $\pm$ sign, the gap edge $E_m^\pm(V):\mathring \cP(\omega,\ve,\kappa_0) \setminus D_m \to \mathbb{R}$ is a $C^1$ function with Fr\'echet derivative
\[
(\partial_V E_m^\pm)(q) = \bbE\left( \psi^2(x;E_m^\pm) q(x) \right).
\]
\end{lemma}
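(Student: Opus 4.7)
The plan is to reduce to the periodic case by approximation. For each rational approximant $V^{(r)}$ of $V \in \mathring\cP(\omega,\ve,\kappa_0) \setminus D_m$, the gap edge $E_m^{\pm,(r)}$ is a simple eigenvalue of the Hill operator $H_{V^{(r)}}$ with periodic (resp.\ antiperiodic) boundary conditions on one period, whose eigenfunction extends to a periodic/antiperiodic solution $\psi_r(x;E_m^{\pm,(r)})$ on $\mathbb{R}$. For a perturbation $q \in \cB(\omega,\kappa_0)$ with periodic approximant $q^{(r)}$ as in \eqref{eqnPeriodicApproximantVr}, the classical Hellmann--Feynman theorem applied to the Hill operator gives
\begin{equation*}
\frac{d}{ds}\bigg|_{s=0} E_m^{\pm,(r)}(V^{(r)} + s q^{(r)}) = \bbE\bigl( \psi_r^2(x;E_m^{\pm,(r)}) q^{(r)}(x) \bigr)
\end{equation*}
under the normalization $\bbE(\psi_r^2)=1$. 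The derivation of Lemma~\ref{lemmaQPeigensolution} (via the Craig product for the Green's function) carries over verbatim to the periodic setting, so $\psi_r^2$ is given by the product formula \eqref{gapedgesolutionnormalized} in terms of the periodic Dirichlet data $\mu_l^{(r)}$ and the critical points $\lambda_l^{(r)}$.

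The next step is to pass to the limit $r \to \infty$ in the derivative formula. By Proposition~\ref{propConthm} and the results of Section~\ref{sectionTranslation}, we have $E_m^{\pm,(r)} \to E_m^\pm$, $\lambda_l^{(r)} \to \lambda_l$, and $\mu_l^{(r)}(\beta) \to \mu_l(\beta)$ uniformly in $\beta \in \bbT^\nu$. Since $V \notin D_m$, the denominators $\lambda_l^{(r)} - E_m^{\pm,(r)}$ stay bounded away from $0$ uniformly in $r$; the uniform bound $\lvert (\mu_l^{(r)} - E_m^{\pm,(r)})/(\lambda_l^{(r)} - E_m^{\pm,(r)}) - 1\rvert \le \gamma_l^{(r)}/\eta_{l,m}^{(r)}$, combined with the exponential decay of gap sizes and the Diophantine-type band separation from Section~\ref{sectionMOperiodic}, produces an $r$-independent summable dominating function. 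Dominated convergence (applied exactly as in the proof of \eqref{eqn30jan}) then yields
\begin{equation*}
\lim_{r\to\infty} \bbE\bigl( \psi_r^2(\cdot;E_m^{\pm,(r)}) q^{(r)} \bigr) = \bbE\bigl( \psi^2(\cdot;E_m^\pm) q \bigr).
\end{equation*}

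To upgrade the periodic derivative formulas into Fr\'echet differentiability of $E_m^\pm$ itself, I would control the Taylor remainder for $E_m^{\pm,(r)}$ uniformly in $r$. For a simple eigenvalue of a self-adjoint operator, second-order perturbation theory gives a remainder bounded by $C s^2 \lVert q\rVert_\infty^2 / \delta_r$, where $\delta_r$ is the distance from $E_m^{\pm,(r)}$ to the rest of the spectrum of $H_{V^{(r)}}$. On $\mathring\cP(\omega,\ve,\kappa_0)\setminus D_m$, the separation to $E_m^{\mp,(r)}$ stays bounded below locally (since $\gamma_m$ is continuous), while the separation to $E_l^{\pm,(r)}$ with $l\neq m$ is bounded below by $\gtrsim a_0 \lvert m\rvert^{-b_0}$ uniformly in $r$ via the quantitative band separation used in Section~\ref{sectionMOperiodic}. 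With this uniform remainder, swapping the order of the limits $r\to\infty$ and $s\to 0$ gives $E_m^\pm(V+sq) - E_m^\pm(V) = s\,\bbE(\psi^2(\cdot;E_m^\pm,V)\,q) + o(s)$, which is the stated differentiability. The $C^1$ property reduces to continuity of the functional $q \mapsto \bbE(\psi^2(\cdot;E_m^\pm(V),V)q)$ in $V$, which is immediate from \eqref{eqn30jan} together with boundedness of $q$. The main obstacle is the uniform-in-$r$ control of the second-order perturbation estimate, i.e.\ verifying that the multiscale bounds of Sections~\ref{sectionMOperiodic}--\ref{sectionTranslation} give uniform separation of Hill eigenvalues at scale $\lvert m\rvert$; once this is in hand, the rest is a routine passage to the limit.
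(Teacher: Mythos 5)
Your first-order ingredients are correct (Hellmann--Feynman for the periodic approximant, the product formula for $\psi_r^2$, dominated convergence of the derivative functional as $r\to\infty$, and the reduction of $C^1$ to continuity of the candidate derivative via \eqref{eqn30jan}). But the ``main obstacle'' you flag at the end is not a technical loose end that the multiscale bounds take care of --- it is a genuine obstruction. Your remainder bound is $Cs^2\lVert q\rVert^2_\infty/\delta_r$ where $\delta_r$ is the distance from $E_m^{\pm,(r)}$ to the rest of the (anti)periodic spectrum of $H_{V^{(r)}}$ on one period $T^{(r)}$. The neighboring (anti)periodic eigenvalues are the edges $E_{\mathfrak{m}'}^{\pm,(r)}$ of the \emph{adjacent} gaps, i.e.\ those with $n(\mathfrak{m}')=n(\mathfrak{m})\pm 1$ or $\pm 2$. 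For such $\mathfrak{m}'$ the Diophantine bound forces $\lvert\mathfrak{m}'\rvert\gtrsim (a_0 T^{(r)})^{1/b_0}\to\infty$, and the band-separation estimate of Section~3 only gives a lower bound of order $a\lvert\mathfrak{m}'\rvert^{-b}$ --- which tends to zero with $r$, as it must, since the bands fill out the half-line as $T^{(r)}\to\infty$. So $\delta_r\to 0$ and the second-order remainder is not uniformly controlled; swapping $r\to\infty$ and $s\to 0$ is not justified. There is no uniform separation of Hill eigenvalues at scale $\lvert m\rvert$ here.

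The paper sidesteps this entirely by never invoking a second-order estimate. Since the periodic gap edges are known to be $C^1$ in the potential (\cite{KP,PT}), one writes the fundamental-theorem identity
\begin{equation*}
E_m^\pm(V^{(r)}+q^{(r)}) - E_m^\pm(V^{(r)}) = \int_0^1 \bbE\bigl( \psi^2_r\bigl(x; E_m^{\pm}(V^{(r)}+tq^{(r)}), V^{(r)}+tq^{(r)}\bigr)\, q^{(r)}(x) \bigr)\,dt
\end{equation*}
and passes to the limit $r\to\infty$ inside this \emph{first-order} identity, which only requires the uniform convergence of products of the form \eqref{gapedgesolutionnormalized} that you already established. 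The resulting integral identity for $V$, together with continuity of the integrand (your \eqref{eqn30jan} argument), gives Fr\'echet differentiability without ever touching the second-order remainder. To repair your argument, replace the Taylor-remainder step with this integral-identity step; the rest of your proposal is sound.
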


\begin{proof} 
By the estimates
\[
\left\lvert \bbE\left( \psi^2(x;E_m^\pm,V_1) q(x) \right) - \bbE\left( \psi^2(x;E_m^\pm,V_2) q(x) \right)  \right\rvert \le \bbE\left( \lvert \psi^2(x;E_m^\pm,V_1) -  \psi^2(x;E_m^\pm,V_2) \rvert \right) \lVert q \rVert_\infty
\]
and
\begin{equation}\label{eqnInftyNorms}
\lVert q \rVert_\infty \le \sum_{n\in\zv} \lVert q\rVert_{\infty,\kappa_0} e^{-\kappa_0 \lvert n \rvert} = C \lVert q\rVert_{\infty,\kappa_0}
\end{equation}
with an explicit constant $C = C(\kappa_0) < \infty$, the proposed derivative is, as a function of $V$, a continuous family of bounded linear functionals.

For the periodic approximants, it is known that the gap edges $E_m^\pm$ are $C^1$ on the set where the gap is open \cite{KP, PT}, so in a neighborhood of some $V$ with $\gamma_m(V) > 0$, the integral identities
\[
E_m^\pm(V+q) - E_m^\pm(V) = \int_0^1 \bbE\left( \psi^2(x; E_m^\pm(V+tq), V+tq) q \right) \,dt
\]
follow by periodic approximation, since we have uniform control of  products of the form \eqref{gapedgesolutionnormalized}. By continuity of $\psi^2(x;E)$, this implies the $C^1$ property.
\end{proof}

We note that the derivative is precisely what one would expect from perturbation theory, and in the periodic case this can be made rigorous by interpreting $E_m^\pm$ as an isolated eigenvalue of an operator with (anti)periodic boundary conditions; this approach does not extend to  the quasiperiodic case.

\begin{prop}\label{propgammam2C1}
The function $\gamma_m^2:\mathring\cP(\omega,\ve,\kappa_0) \to \mathbb{R}$ is $C^1$, with Fr\'echet  derivative % is given by
\[
(\partial_V \gamma_m^2)(q) = \begin{cases}
2\gamma_m \bbE\left(  ( \psi^2(x;E_m^+) - \psi^2(x;E_m^-)) q(x) \right), &  V\notin D_m, \\
0, & V\in D_m.
\end{cases}
\]
\end{prop}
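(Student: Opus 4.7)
The plan is to split the argument into (A) the open set $\mathring\cP(\omega,\ve_1,\kappa_0)\setminus D_m$, where differentiability follows from the chain rule applied to Lemma~\ref{EmpmC1}, and (B) points $V_0\in D_m$, where both $\gamma_m^2$ and the proposed derivative vanish, and one must verify Fr\'echet differentiability directly. The main point to orchestrate is then the continuity of the derivative across $\partial D_m$.

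On the open set $\mathring\cP(\omega,\ve_1,\kappa_0)\setminus D_m$, Lemma~\ref{EmpmC1} gives $E_m^\pm\in C^1$ with derivatives $q\mapsto \bbE(\psi^2(x;E_m^\pm)q)$. Since $\gamma_m^2 = (E_m^+-E_m^-)^2$, the chain rule immediately yields the stated formula
\[
(\partial_V\gamma_m^2)(q) = 2\gamma_m\,\bbE\bigl((\psi^2(x;E_m^+)-\psi^2(x;E_m^-))q(x)\bigr),
\]
and continuity of this functional on $\mathring\cP(\omega,\ve_1,\kappa_0)\setminus D_m$ follows from continuity of $\gamma_m$ (gap edges are $1$-Lipschitz in $\|\cdot\|_\infty$, which is controlled by $\|\cdot\|_{\infty,\kappa_0}$ via \eqref{eqnInftyNorms}) together with the $L^1$-type continuity of $\psi^2(x;E_m^\pm)$ established in the preceding lemma.

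At a point $V_0\in D_m$, the proposed derivative is the zero functional, so differentiability amounts to the quadratic bound $\gamma_m^2(V_0+q)=o(\|q\|_{\infty,\kappa_0})$. This is obtained from the general fact that the gap edges $E_m^\pm$ are $1$-Lipschitz with respect to $\|\cdot\|_\infty$: since $E_m^+(V_0)=E_m^-(V_0)$, we get
\[
\gamma_m(V_0+q) = E_m^+(V_0+q)-E_m^-(V_0+q) \le 2\|q\|_\infty \le 2C(\kappa_0)\|q\|_{\infty,\kappa_0},
\]
so $\gamma_m^2(V_0+q) = O(\|q\|_{\infty,\kappa_0}^2)$, proving Fr\'echet differentiability with derivative $0$ at $V_0$.

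For continuity of $\partial_V\gamma_m^2$ at a point $V_0\in D_m$, use the normalization $\bbE(\psi^2(x;E_m^\pm))=1$ to estimate
\[
\bigl|(\partial_V\gamma_m^2)(q)\bigr| \le 2\gamma_m(V)\cdot 2 \|q\|_\infty \le 4C(\kappa_0)\,\gamma_m(V)\,\|q\|_{\infty,\kappa_0},
\]
so the operator norm of $\partial_V\gamma_m^2$ is dominated by $4C(\kappa_0)\gamma_m(V)$; since $\gamma_m$ is continuous and vanishes at $V_0$, this operator norm tends to zero, matching the zero derivative at $V_0$. The main subtlety is just the tracking of these two regimes and the observation that the uniform bound $\bbE(\psi^2(\cdot;E_m^\pm))=1$ (from Lemma~\ref{lemmaQPeigensolution}) keeps the coefficient of $\gamma_m$ in the derivative bounded even as one approaches $D_m$, so the factor $\gamma_m$ alone controls the size of the derivative near closed gaps.
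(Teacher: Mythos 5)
Your proof is correct and follows essentially the same two-case structure as the paper: chain rule via Lemma~\ref{EmpmC1} off $D_m$, and the Lipschitz estimate for $\gamma_m$ to get Fr\'echet derivative zero on $D_m$. Your extra step making explicit the continuity of the derivative across $\partial D_m$---bounding the operator norm of $\partial_V\gamma_m^2$ by a constant times $\gamma_m(V)$ via the normalization $\bbE(\psi^2(\cdot;E_m^\pm))=1$---is a welcome expansion of a point the paper's proof leaves implicit.
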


\begin{proof}
At points where $\gamma_m(V) > 0$, this follows from $\gamma_m(V) = E_m^+(V) - E_m^-(V)$ and Lemma~\ref{EmpmC1}.

The Lipshitz estimate $\lvert \gamma_m(V+q) - \gamma_m(V) \rvert \le \lVert q \rVert_\infty$ implies that $\partial_V \gamma_m^2 = 0$ at points $V$ where $\gamma_m(V) = 0$, which completes the proof.
\end{proof}

To prove differentiability of the gap midpoint,  we need more precise estimates of the behavior of products as the gap length goes to zero.

\begin{lemma}\label{lemmaproductlinear}
For $V \in \mathcal{P}(\omega,\ve,\kappa_0)$, fix $m\in\zv$, $m\omega > 0$, and let $f(E;V) \in [E_m^-(V), E_m^+(V)] \to \mathbb{R}$ be given by the product formula
\begin{equation}\label{e.feVproduct}
f(E;V) = \sqrt{ \frac 1{E - E_0} \prod_{l \neq m} \frac{(E - a_l)^2}{(E- E_l^-)(E-E_l^+)} }
\end{equation}
with arbitrary choices of $a_l(V) \in [E_l^-(V), E_l^+(V)]$.
\begin{enumerate}
\item There is a universal constant $C$ such that $C^{-1} \le f(E;V) \le C$ for all $V$ and all $E \in [E_m^-(V), E_m^+(V)]$. Moreover,
\[
f(E;V) = f(\tau_m) +  O(\gamma_m), \qquad \gamma_m \to 0,
\]
uniformly in $V \in \mathcal{P}(\omega,\ve,\kappa_0)$ and $E \in [E_m^-(V), E_m^+(V)]$.
\item If the functions $a_l(V)$ are continuous for all $l$, and $E = E(V)$ is a continuous function such that $E(V) \in [E_m^-(V), E_m^+(V)]$, then $f(E(V); V)$ is continuous in $V$.
\end{enumerate}
\end{lemma}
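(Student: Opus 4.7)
The natural object to work with is $\log f(E;V)$, since the product becomes a sum:
\[
2 \log f(E;V) = -\log (E - E_0) + \sum_{l \neq m} \bigl[ 2 \log |E - a_l| - \log |E - E_l^-| - \log |E - E_l^+|  \bigr].
\]
The whole lemma reduces to proving that $\log f(E;V)$ is uniformly bounded on $[E_m^-(V), E_m^+(V)]$ and that its derivative in $E$ is uniformly bounded there, with all bounds uniform in $V \in \cP(\omega,\ve,\kappa_0)$. From such a Lipschitz bound with constant $L$, $f(E;V) - f(\tau_m;V) = f(\tau_m;V) (e^{O(\gamma_m)}-1) = O(\gamma_m)$, since $|E - \tau_m| \le \gamma_m/2$.

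First I would prove the uniform two-sided bound in (1). For $E \in [E_m^-, E_m^+]$ and $l \neq m$, the factor obeys
\[
\left(1 - \frac{\gamma_l}{\eta_{m,l}}\right)^{2} \le  \frac{(E-a_l)^2}{(E-E_l^-)(E-E_l^+)} \le \left(1 + \frac{\gamma_l}{\eta_{m,l}}\right)^{2},
\]
and the prefactor satisfies $\eta_{m,0} \le E - E_0 \le \eta_{m,0} + \gamma_m$. Combining these with the product estimate \eqref{Amdefinition}, we get $f(E;V)$ uniformly bounded above and below by constants controlled by $A(m)$, which is finite by \eqref{productAmbound1} (and here $m$ is fixed). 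This also handles the limit case where some gaps close (terms with $\gamma_l = 0$ contribute exactly $1$).

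Next I would prove the Lipschitz bound in $E$. Differentiating,
\[
\partial_E \log f(E;V) = -\frac{1}{2(E-E_0)} + \sum_{l \neq m} \left( \frac{1}{E - a_l}  - \frac{1}{2(E - E_l^-)} - \frac{1}{2(E - E_l^+)} \right),
\]
and each summand can be rewritten as
$\tfrac{1}{2}\bigl[\tfrac{a_l - E_l^-}{(E-a_l)(E-E_l^-)} + \tfrac{a_l - E_l^+}{(E-a_l)(E-E_l^+)}\bigr]$, so its absolute value is bounded by $\gamma_l / \eta_{m,l}^2$. The main technical point is to show that the series $\sum_{l \neq m} \gamma_l/\eta_{m,l}^2$ is finite, uniformly in $V$. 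This follows from the exponential decay $\gamma_l \le 2\ve \exp(-\tfrac{\kappa_0}{2}|l|)$ of Theorem~\ref{thmtildeb}(i) combined with the lower bound $\eta_{m,l} \ge a_0 |m - l|^{-b_0} - (\gamma_m + \gamma_l)/2$ inherited from the Diophantine condition via the two-term asymptotics of the Marchenko--Ostrovskii map (the same mechanism that controls $A(m)$). I expect this to be the main obstacle: handling the $l$'s with small $\eta_{m,l}$ requires the same kind of multiscale bookkeeping that goes into \eqref{productAmbound1}, though carried out for the differentiated product. Once this uniform bound is in place, integrating $\partial_E \log f$ from $\tau_m$ to $E$ and exponentiating, using the bound from (1) on $f(\tau_m;V)$, gives $f(E;V) = f(\tau_m;V) + O(\gamma_m)$ uniformly.

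Finally, for (2), I would observe that each factor $(E(V)-a_l(V))^2 / ((E(V)-E_l^-(V))(E(V)-E_l^+(V)))$ is continuous in $V$ by continuity of gap edges (which follows from Proposition~\ref{propConthm}), of the $a_l$ by hypothesis, and of $E$. Since the tail of the product converges uniformly in $V$ by the same upper bound $\prod(1+\gamma_l/\eta_{m,l})^2$ used in (1), the infinite product is a uniform limit of finite continuous products and hence continuous. The prefactor $1/\sqrt{E - E_0}$ is trivially continuous since $E - E_0 \ge \eta_{m,0}$ is bounded away from zero. This gives continuity of $f(E(V);V)$ in $V$, completing the proof.
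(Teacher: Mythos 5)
Your proposal follows essentially the same route as the paper: both take the logarithmic derivative of $f$, bound each summand by $\gamma_l/\eta_{m,l}^2$, and invoke the exponential decay of gap sizes against the polynomial lower bound on gap distances (the same ingredients behind \eqref{productAmbound1}) to get uniform convergence of the series, with part~(1) then following by integrating the logarithmic derivative over $[\tau_m,E]$ and part~(2) by uniform convergence of the finite partial products. The step you flag as "the main obstacle" is handled at the same level of terseness in the paper, and your slightly looser factor bounds $(1\pm\gamma_l/\eta_{m,l})^2$ in place of the paper's $1+\gamma_l/\eta_{m,l}$ make no difference to the argument.
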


\begin{proof}
We can bound
\[
\left\lvert \frac{(E-a_l)^2}{(E-E_l^-)(E-E_l^+)} \right\rvert^{\pm 1} \le \frac{\max(\lvert E-E_l^-\rvert,\lvert E-E_l^+\rvert)}{\min(\lvert E-E_l^-\rvert,\lvert E-E_l^+\rvert)} = 1 + \frac{\gamma_l}{\eta_{m,l}}
\]
so \eqref{Amdefinition}, \eqref{productAmbound1} gives uniform bounds on $f(E)$. The logarithmic derivative of $f(E;V)$ is
\[
\frac{\partial_E f(E;V)}{f(E;V)} = \frac 12 \left( - \frac 1{E- E_0} + \sum_{l\neq m} \left( \frac 2{E-a_l} - \frac 1{E-E_l^-} - \frac 1{E-E_l^+} \right)\right)
\]
and terms of this series are bounded by
\[
\left\lvert \frac 2{E-a_l} - \frac 1{E-E_l^-} - \frac 1{E-E_l^+} \right\rvert \le \left\lvert \frac 1{E-E_l^-} - \frac 1{E-E_l^+} \right\rvert \le \frac{\gamma_l}{\eta_{m,l}^2}.
\]
This implies uniform convergence of the series, due to exponential bounds on the gaps and polynomial lower bounds on the gap distances. This completes the proof.
\end{proof}

We now consider the function
\[
b_m = \begin{cases}
2 \frac{\lambda_m - \tau_m}{\gamma_m}, & \gamma_m > 0 \\
0 & \gamma_m = 0
\end{cases}
\]
introduced to quantify how much the critical point $\lambda_m$ deviates from the gap midpoint $\tau_m$ defined in \eqref{eqnGapMidpoint}. If the $m$-th gap is small but nonzero, the rest of the infinite product for $w'$ is roughly constant along the $m$-th gap, so the critical point can be expected to be roughly in the middle of the gap; the following lemma is a rigorous version of this intuition:

\begin{lemma}\label{lemmabmC}
The function $b_m$ on $\cP(\omega,\ve,\kappa_0) \setminus D_m$ satisfies $b_m = O(\gamma_m)$ as $\gamma_m \to 0$. In particular, when extended by $b_m(V) = 0$ for $\gamma_m(V) = 0$, it is a continuous function on  $\mathcal{P}(\omega,\ve,\kappa_0)$.
\end{lemma}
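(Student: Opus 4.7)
The plan is to integrate the product representation of $w'$ from Proposition~\ref{propConthm}(iv) across the $m$-th gap and use that $w$ returns to the same boundary value at both endpoints. For $E$ in the open gap $(E_m^-,E_m^+)$, the boundary values of $w$ from $\bbC_+$ are real up to the constant imaginary shift $im\omega$, and factoring out the $l=m$ term in \eqref{wderivativeproduct} gives
\[
w'(E) \;=\; \frac{E-\lambda_m}{\sqrt{(E_m^+-E)(E-E_m^-)}}\,\chi_m(E),
\]
with the sign pinned down by the observation that $\Re w$ decreases from $0$ at $E_m^-$ to $-h_m$ at $\lambda_m$ and then increases back to $0$ at $E_m^+$. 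In particular $\int_{E_m^-}^{E_m^+} w'(E)\,dE = w(E_m^+)-w(E_m^-) = 0$.

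I would then substitute $E = \tau_m + \tfrac{\gamma_m}{2}\cos\theta$, $\theta\in[0,\pi]$, under which the weight $dE/\sqrt{(E_m^+-E)(E-E_m^-)}$ reduces to $d\theta$ (up to orientation, which is harmless since the integral vanishes), and the vanishing identity becomes
\[
\int_0^\pi \Bigl(\tau_m-\lambda_m + \tfrac{\gamma_m}{2}\cos\theta\Bigr)\,\chi_m\!\Bigl(\tau_m+\tfrac{\gamma_m}{2}\cos\theta\Bigr)\,d\theta \;=\; 0.
\]
Lemma~\ref{lemmaproductlinear} applied with $a_l=\lambda_l$ (so that $\chi_m$ differs from $f$ only by a factor of $2$) furnishes both a uniform two-sided bound $C^{-1}\le\chi_m\le C$ and the uniform approximation $\chi_m(E)=\chi_m(\tau_m)+O(\gamma_m)$ on $[E_m^-,E_m^+]$. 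Because $\int_0^\pi \cos\theta\,d\theta=0$, only the $O(\gamma_m)$ remainder contributes to the $\cos\theta$ term, so the identity collapses to
\[
(\tau_m-\lambda_m)\bigl(\pi\chi_m(\tau_m)+O(\gamma_m)\bigr) \;=\; O(\gamma_m^2),
\]
which gives $\lambda_m-\tau_m = O(\gamma_m^2)$, and hence $b_m = O(\gamma_m)$, uniformly in $V\in\cP(\omega,\ve,\kappa_0)$.

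For the continuity extension to all of $\cP(\omega,\ve,\kappa_0)$: on the open set $\{\gamma_m>0\}$, $\tau_m$ is continuous as the half-sum of the gap edges, and $\lambda_m$—the unique zero of $w'$ in the open gap—is continuous by Lemma~\ref{lemmaproductlinear}(2) together with the continuity of the spectral data $E_l^\pm$ and $\lambda_l$ through periodic approximation established in Proposition~\ref{propConthm} (and Proposition~\ref{propMOconvergence}); thus $b_m$ is continuous on this set. At any $V_0\in D_m$, the uniform estimate $|b_m(V)|\le C\gamma_m(V)$ combined with continuity of $\gamma_m$ (Proposition~\ref{propgammam2C1}) yields $b_m(V)\to 0 = b_m(V_0)$ as $V\to V_0$.

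The main subtlety is really just the sign and branch bookkeeping in the product formula for $w'$, together with securing the continuity of $\lambda_m$ on $\{\gamma_m>0\}$; once the integral identity $\int w'=0$ is in hand, the quadratic-in-$\gamma_m$ gain is a first-moment cancellation dictated entirely by $\int_0^\pi\cos\theta\,d\theta=0$, and Lemma~\ref{lemmaproductlinear} does the rest of the work.
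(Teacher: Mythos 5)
Your proof is correct and follows essentially the same route as the paper: integrate $w'$ across the gap to get $\int_{E_m^-}^{E_m^+} w'(E)\,dE = 0$, normalize the gap with the change of variables $E = \tau_m + \tfrac{\gamma_m}{2}s$ (your $s=\cos\theta$ is the same substitution), apply Lemma~\ref{lemmaproductlinear} to write $\chi_m(E)=\chi_m(\tau_m)+O(\gamma_m)$, and conclude from the first-moment cancellation that $b_m = O(\gamma_m)$. The only differences are cosmetic (trigonometric versus algebraic parametrization) plus your welcome extra detail on sign/branch bookkeeping and the continuity of $\lambda_m$ on $\{\gamma_m>0\}$, which the paper leaves implicit.
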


\begin{proof}
Lemma~\ref{lemmaproductlinear} implies the product asymptotics
\[
\chi_m(E) = \chi_m(\tau_m) + O(\gamma_m), \quad \gamma_m \to 0.
\]
Observe that
\[
0 = w(E_m^+) - w(E_m^-) = \int_{E_m^-}^{E_m^+} w'(E)\, dE
\]
and use the change of variables $E = \tau_m + \frac {\gamma_m}2 s$ to rewrite this as
\[
0 = \frac{\gamma_m}2 \int_{-1}^1 \frac{b_m-s}{\sqrt{1-s^2}} (\chi_m(\tau_m) + O(\gamma_m)) ds  = \frac{\gamma_m}2 \chi_m(\tau_m) \int_{-1}^1  \frac{b_m-s}{\sqrt{1-s^2}} ds + O(\gamma_m^2).
\]
Computing the remaining explicit integral gives
\[
0  = \frac{\gamma_m}2 \chi_m(\tau_m) \pi b_m + O(\gamma_m^2)
\]
which implies $b_m = O(\gamma_m)$.
\end{proof}

The product formula for $\psi^2(x;E_m^\pm)$ contains the factor $\frac{\mu_m(\omega x) - E_m^\pm}{\lambda_m - E_m^\pm}$, whose denominator goes to zero as the gap closes. The previous lemma helps us to control this term in order to prove differentiability of gap midpoints:

\begin{prop}\label{proptaumC1} The function $\tau_m :\mathring\cP(\omega,\ve,\kappa_0) \to \mathbb{R}$ is $C^1$, with Fr\'echet derivative
\begin{equation}\label{taumderivative}
(\partial_V \tau_m)(q) = \begin{cases}
\bbE\left( \frac 12(\psi^2(x;E_m^+) +\psi^2(x;E_m^-)) q(x) \right), & \gamma_m > 0, \\
\bbE\left( \chi_m(\tau_m)^{-1}  \Phi_m(\omega x;\tau_m) q(x) \right), & \gamma_m =0.
\end{cases}
\end{equation}
\end{prop}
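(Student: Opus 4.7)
The proof naturally splits into two cases. On the open set $\{V : \gamma_m(V) > 0\}$, Lemma~\ref{EmpmC1} applied to both gap edges immediately yields that $\tau_m = (E_m^+ + E_m^-)/2$ is $C^1$ with Fr\'echet derivative equal to the first expression in \eqref{taumderivative}. The remaining work is to handle closed gaps and to glue the two formulas into a single continuous derivative on all of $\mathring\cP(\omega,\ve,\kappa_0)$.

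Before addressing differentiability at closed gaps, I would show the two formulas match up continuously across $D_m=\{\gamma_m=0\}$. Parametrizing $\mu_m(\omega x)=\tau_m+s(\omega x)\gamma_m/2$ with $s(\omega x) \in [-1,1]$ and $\lambda_m=\tau_m+b_m\gamma_m/2$ in the product formula \eqref{gapedgesolutionnew}, the factors of $\gamma_m/2$ cancel to give
\[
\psi^2(x;E_m^\pm) = \frac{1\mp s(\omega x)}{1 \mp b_m}\cdot \frac{\Phi_m(\omega x; E_m^\pm)}{\chi_m(E_m^\pm)}.
\]
Since $b_m=O(\gamma_m)$ by Lemma~\ref{lemmabmC}, the algebraic identity $\tfrac 12\bigl(\tfrac{1-s}{1-b_m}+\tfrac{1+s}{1+b_m}\bigr) = \tfrac{1-sb_m}{1-b_m^2} \to 1$ uniformly in $s\in[-1,1]$, combined with the uniform continuity of $\Phi_m$ and $\chi_m$ at $E=\tau_m$ from Lemma~\ref{lemmaproductlinear}, yields
\[
\tfrac 12\bigl(\psi^2(\beta;E_m^+)+\psi^2(\beta;E_m^-)\bigr)\longrightarrow \chi_m(\tau_m)^{-1}\Phi_m(\beta;\tau_m)\quad\text{as } \gamma_m\to 0,
\]
uniformly in $\beta\in\mathbb{T}^\nu$. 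Since $\Phi_m/\chi_m$ is itself continuous in $V$, the candidate derivative $L(V): q\mapsto(\partial_V\tau_m)(q)$ defined by \eqref{taumderivative} extends to a continuous map $V\mapsto L(V)\in\cB(\omega,\kappa_0)^*$ on all of $\mathring\cP(\omega,\ve,\kappa_0)$, with operator norm controlled via \eqref{eqnInftyNorms}.

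For Fr\'echet differentiability at $V_0\in D_m$, I would use periodic approximation. For each rational approximant $V^{(r)}$, $\tau_m^{(r)}$ is $C^1$ on its entire domain by classical periodic perturbation theory: gap edges are (anti)periodic eigenvalues, and even when they coincide as a double eigenvalue, their sum remains analytic in the potential by Kato's theory for splitting of degenerate eigenvalues. Writing the integral form
\[
\tau_m^{(r)}(V^{(r)}+q^{(r)})-\tau_m^{(r)}(V^{(r)}) = \int_0^1 L^{(r)}\bigl(V^{(r)}+tq^{(r)}\bigr)(q^{(r)})\,dt
\]
and passing $r\to\infty$ using Prop.~\ref{propConthm}, the uniform bounds \eqref{productAmbound1}, and the continuity of $L$ yields the analogous integral identity at $V_0$; continuity of $L$ then promotes this to Fr\'echet differentiability with derivative $L(V_0)$. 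The main obstacle is uniform-in-$r$ control of the integrand: one needs uniform bounds on $\psi^{2,(r)}$ (or its limiting form $\Phi_m^{(r)}/\chi_m^{(r)}$) even when some $\gamma_m^{(r)}$ are very small, together with uniform convergence of the Dirichlet data $\mu_l^{(r)}$ and the critical points $\lambda_l^{(r)}$. Both follow from \eqref{productAmbound1} and Theorem~\ref{thmtildeb} applied uniformly across the approximants, the same inputs used in Section~\ref{sectionMOperiodic} to compare $\gamma_\mathfrak{m}$ and $h_\mathfrak{m}$.
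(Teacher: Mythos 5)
Your proof is correct and follows the same strategy as the paper: on $\{\gamma_m>0\}$ differentiability comes from Lemma~\ref{EmpmC1}, continuity of the candidate derivative across $D_m$ is obtained by cancelling the $\gamma_m/2$ factors in the product formula and using $b_m=O(\gamma_m)$, and the differentiability at closed gaps follows from periodic approximation plus continuity of the limiting functional. Your explicit remark that Kato's theory handles the degenerate (anti)periodic eigenvalue in the periodic approximants is a welcome clarification of a point the paper leaves implicit in the phrase ``in the periodic case this is known to be the formula for the derivative.''
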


\begin{proof}
The main step is to verify that the quasiperiodic function
\[
g(x;V) = \begin{cases}
 \frac 12(\psi^2(x;E_m^+) +\psi^2(x;E_m^-)), & \gamma_m > 0, \\
 \chi_m(\tau_m)^{-1}  \Phi_m(\omega x;\tau_m), & \gamma_m =0
\end{cases}
\]
 is continuous in $V$. It is clearly continuous on the closed set $D_m$ and on its open complement, so the only nontrivial case is continuity at points $V \in D_m$ when approached from its complement.
Using
\[
\frac 2{\gamma_m} \lvert \mu_m(\beta) - E^\pm\rvert \Phi_m(\beta;E^\pm) = (1\mp \cos\varphi_m(\beta)) (\Phi_m(\beta;\tau_m) + O(\gamma_m))
\]
and
\[
\frac 2{\gamma_m} \lvert \lambda_m - E_m^\pm \rvert \chi_m(E_m^\pm) = (1\mp b_m) (\chi_m(\tau_m) + O(\gamma_m)) = \chi_m(\tau_m) + O(\gamma_m),
\]
we conclude that
\[
\psi^2(x;E_m^\pm)
= \frac{\lvert \mu_m(\beta) - E^\pm\rvert \Phi_m(\beta;E^\pm)}{\lvert \lambda_m  - E^\pm\rvert \chi_m(E^\pm)}
= \frac{(1\mp \cos\varphi_m(\beta)) \Phi_m(\beta;\tau_m)}{\chi_m(\tau_m)} + O(\gamma_m)
\]
and therefore
\[
\frac 12(\psi^2(x;E_m^-) + \psi^2(x;E_m^+)) = \frac{\Phi_m(\beta;\tau_m)}{\chi_m(\tau_m)} + O(\gamma_m)
\]
which shows continuity of $g$ at a point $V \in D_m$.

Now the rest of the proof follows by periodic approximation, analogously to the proof of Lemma~\ref{EmpmC1}, since in the periodic case this is known to be the formula for the derivative.
\end{proof}

\begin{proof}[Proof of Theorem~\ref{thmGaps}]
This result is contained in Props.~\ref{propgammam2C1} and \ref{proptaumC1}.
\end{proof}

\section{Actions} \label{sectionActions}

In this section, we study the actions $I_m$ defined by \eqref{eqnAction}. 

For $V \in \cP(\omega,\ve,\kappa_0)$, by the previous sections, $w'$ has square-root singularities at the gap edges, and $w(\lambda) \to im\omega$ as $\lambda \to E_m^\pm$, so by an integration by parts, \eqref{eqnAction} can be rewritten in the form
\[
I_m = \frac 2\pi \int_{E_m^-}^{E_m^+} \lambda w'(\lambda) \,d\lambda
\]
or the form
\begin{equation}\label{eqnImactiondefinition2}
I_m(V) = \frac {2}{\pi}  \int_{E_m^-}^{E_m^+} (\lambda - \lambda_m) w'(\lambda)\, d\lambda.
\end{equation}
From either \eqref{eqnAction} or \eqref{eqnImactiondefinition2}, we see that $I_m\ge 0$, and $I_m(V) = 0$ if and only if the $m$-th gap is closed. The first goal of this section is to prove the differentiability of actions:

\begin{prop}\label{propIm}
The function $I_m: \mathring\cP(\omega,\ve,\kappa_0) \to \mathbb{R}$ is a $C^1$ function, with Fr\'echet derivative \eqref{imderivative}.
\end{prop}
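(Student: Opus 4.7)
My plan is to prove the theorem by establishing the integral identity
\[
I_m(V+q) - I_m(V) = \int_0^1 L_{V+tq}(q)\, dt, \qquad L_V(q) := \frac 2\pi \bbE\left( q(x) \int_{E_m^-(V)}^{E_m^+(V)} G(x,x;\lambda,V)\, d\lambda\right),
\]
for $V\in \mathring\cP(\omega,\ve_1,\kappa_0)$ and $q\in \cB(\omega,\kappa_0)$ of sufficiently small norm, combined with continuity of $V\mapsto L_V$ as a family of bounded linear functionals on $\cB(\omega,\kappa_0)$; together, these yield $C^1$-regularity with Fr\'echet derivative $L_V$. Heuristically $L_V$ comes from differentiating $I_m = -\frac 2\pi\int_{E_m^-}^{E_m^+}\Re w(\lambda)\,d\lambda$ under the integral sign: the Leibniz boundary terms from $E_m^\pm(V)$ vanish since $\Re w(E_m^\pm)=0$ (the gap edges sit in the spectrum), while the integrand contribution $\partial_V \Re w(\lambda)(q)$ should equal $-\bbE(q(\cdot)G(\cdot,\cdot;\lambda,V))$ by the standard Green's function perturbation identity. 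Rather than justify this pointwise identity directly in the quasiperiodic setting, I will transfer the already-integrated identity from the periodic approximants.

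First I would show that $L_V$ is well-defined and bounded with norm $O(\gamma_m)$. Via Craig's product formula \eqref{GreenformulaPhi} and the change of variable $\lambda = \tau_m + (\gamma_m/2)s$, $s\in(-1,1)$, the inner integral becomes
\[
\int_{E_m^-}^{E_m^+} G(x,x;\lambda,V)\, d\lambda = \int_{-1}^{1} \frac{\tau_m + \tfrac{\gamma_m}2 s - \mu_m(\omega x)}{\sqrt{1-s^2}}\,\Phi_m\bigl(\omega x; \tau_m + \tfrac{\gamma_m}2 s\bigr)\, ds,
\]
which is bounded in $x$ by $C\gamma_m$, since $|\tau_m - \mu_m(\omega x)| \le \gamma_m/2$ and $\Phi_m$ is uniformly bounded by Lemma~\ref{lemmaproductlinear}. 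Combined with $\|q\|_\infty \le C(\kappa_0)\|q\|_{\infty,\kappa_0}$, this gives $|L_V(q)| \le C\gamma_m\,\|q\|_{\infty,\kappa_0}$; continuity of $V\mapsto L_V$ then follows from continuity of the Dirichlet data $\mu_m(\omega x,V)$, of $\tau_m$, $\gamma_m$ (Props.~\ref{propgammam2C1}, \ref{proptaumC1}) and from Lemma~\ref{lemmaproductlinear}(2), and crucially $L_V$ extends continuously by zero to the closed set $D_m$. For the integrated identity itself, for the periodic approximants $V^{(r)}$ it is classical in the periodic setting (cf.~Flaschka--McLaughlin \cite{FMcL} and \cite[Thm.~6.1]{KP}), being a direct consequence of first-order eigenvalue perturbation theory for operators with (anti)periodic boundary conditions. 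I would then pass to the limit $r\to\infty$: $E^{\pm,(r)}_m\to E^\pm_m$ and $\lambda^{(r)}_m\to\lambda_m$ by Prop.~\ref{propConthm}, the uniform control of $\Phi_m^{(r)}$ provided by \eqref{productAmbound1} gives uniform-on-compacts convergence $\int G^{(r)}(x,x;\lambda)\,d\lambda \to \int G(x,x;\lambda)\,d\lambda$, the spatial averages of $q^{(r)}(x)\int G^{(r)}(x,x;\lambda)d\lambda$ over $[0,T_r]$ converge to the quasiperiodic mean $\bbE$ by a Birkhoff-type argument using the equicontinuity from Lemma~\ref{lemmaUniformCompacts}, and $I_m^{(r)}\to I_m$ by dominated convergence in \eqref{eqnImactiondefinition2}.

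The main obstacle will be the closed-gap set $D_m$. At $V\in D_m$ both sides of the proposed derivative vanish, and Fr\'echet differentiability at such $V$ reduces to $I_m(V+q) = o(\|q\|_{\infty,\kappa_0})$; in fact, the same change-of-variable computation applied to \eqref{eqnImactiondefinition2} gives the stronger quadratic bound $I_m(V+q) \le C\gamma_m(V+q)^2 = O(\|q\|_\infty^2)$, via the $1$-Lipshitz estimate $\gamma_m(V+q)\le 2\|q\|_\infty$ valid at $V\in D_m$; this is also the mechanism underlying the asymptotic $I_m/\gamma_m^2 \to \chi_m(\tau_m)/4$ that part (d) will require. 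A secondary concern is the absence of a clean quasiperiodic analogue of the pointwise identity $\partial_V w(\lambda)(q) = -\bbE(qG)$, but the strategy above sidesteps it by transferring only the integrated identity from the periodic side, where the required uniform estimates are supplied by Sections~\ref{sectionMOperiodic}--\ref{sectionTranslation}.
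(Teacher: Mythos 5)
Your proposal follows essentially the same route as the paper: prove that $\Lambda_m(q;V) = \bbE\bigl(q(x)\int_{E_m^-}^{E_m^+} G(x,x;\lambda,V)\,d\lambda\bigr)$ is a bounded linear functional with norm $O(\gamma_m)$ and is continuous in $V$ (this is the paper's Lemma~\ref{lemmaLambdamC}), import the integrated identity $I_m(V+q) - I_m(V) = \frac 2\pi\int_0^1 \Lambda_m(q;V+tq)\,dt$ from the periodic case, and pass to the limit along periodic approximants. One minor redundancy: your separate treatment of $D_m$ via the quadratic bound $I_m = O(\gamma_m^2)$ is unnecessary, because once the integral identity holds on all of $\mathring\cP(\omega,\ve,\kappa_0)$ and $\Lambda_m$ is continuous there (including on $D_m$, where it vanishes by the $O(\gamma_m)$ bound), $C^1$-regularity follows everywhere at once by the standard argument — the quadratic estimate is really the content of Proposition~\ref{rhomC1}, not of this one.
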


In the previous section, we have already used the diagonal Green's function. By Scharf \cite{Scharf} (see also Johnson--Moser \cite{JM}), for $z \in\mathbb{C} \setminus \cS$, $G(x,x;z,V)$ is an almost periodic function of $x$ with the same frequency module as $V$ (or a proper subset), and the product formula \eqref{Craigproductformula} is well-suited to be rewritten with respect to the hull: for each $z\in\mathbb{C} \setminus \cS$, if we define
\[
\Gamma(\beta;z,V) = \frac 12 \sqrt{\frac 1{E_0 - z}  \prod_{\substack{l  \in \zv \\ l \omega > 0}} \frac{(\mu_l(\beta) - z)^2}{(E_l^--z)(E_l^+-z)}},
\]
then $\Gamma(\cdot\,;\lambda,V)\in C(\mathbb{T}^\nu)$ and
\[
G(x,x;\lambda,V) = \Gamma(\omega x;\lambda,V),\quad \forall x\in\mathbb{R}.
\]
This agrees with changes of the potential in the hull in the sense that if $V(x) = U(\omega x)$ with $U: \mathbb{T}^\nu \to \mathbb{R}$, and if $V_\beta(x) = U(\omega x + \beta)$, then
\[
G(x,x;\lambda, V_\beta) = \Gamma(\omega x + \beta;\lambda,V), \quad \forall x\in \mathbb{R}.
\]
We will use this to rewrite the candidate derivative (the right-hand side of \eqref{imderivative}) as an integral over the hull and to prove that it is a continuous functional on $\cP(\omega,\ve,\kappa_0)$:

\begin{lemma}\label{lemmaLambdamC}
For any $V\in \mathcal{P}(\omega,\ve,\kappa_0)$, the expression
\[
\Lambda_m(q;V) = \bbE \left( q(x)  \int_{E_m^-}^{E_m^+} G(x,x;\lambda,V) \, d\lambda \right)
\]
defines a bounded linear functional  $\Lambda_{m}(\cdot\,;V): \cP(\omega,\ve,\kappa_0) \to \bbR$ and
\begin{equation}\label{Lambdambound}
\lvert \Lambda_{m} (q;V) \rvert \le A(m) \gamma_m \pi \lVert q \rVert_\infty.
\end{equation}
Moreover, the functional $\Lambda_{m}(\cdot \,; V)$ depends continuously on $V\in \cP(\omega,\ve,\kappa_0)$.
\end{lemma}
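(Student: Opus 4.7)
Linearity of $\Lambda_m(\cdot;V)$ in $q$ is immediate from the definition, so I focus on the bound \eqref{Lambdambound} and on the continuity in $V$. For the bound, I would use the factorization \eqref{GreenformulaPhi} and show $\Phi_m(\omega x;\lambda)\le A(m)$ uniformly for $\lambda\in[E_m^-,E_m^+]$. The termwise estimates $|\mu_l-\lambda|/\sqrt{|E_l^--\lambda||E_l^+-\lambda|}\le (1+\gamma_l/\eta_{m,l})^{1/2}$ for $l\ne m$, together with $|E_0-\lambda|^{-1/2}\le \eta_{m,0}^{-1/2}$, yield $\Phi_m\le A(m)$ directly from the definition \eqref{Amdefinition}. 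Combined with $|\lambda-\mu_m(\omega x)|\le \gamma_m$ and the elementary integral $\int_{E_m^-}^{E_m^+}[(E_m^+-\lambda)(\lambda-E_m^-)]^{-1/2}\,d\lambda=\pi$, this gives $\int_{E_m^-}^{E_m^+}|G(x,x;\lambda,V)|\,d\lambda\le A(m)\gamma_m\pi$ uniformly in $x\in\bbR$; pulling $\|q\|_\infty$ out of the mean yields \eqref{Lambdambound}.

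For the continuity statement, I would rewrite everything on the hull. Writing $V(x)=U(\omega x)$ and $q(x)=q_U(\omega x)$, Birkhoff's theorem gives
\[
\Lambda_m(q;V)=\int_{\bbT^\nu}q_U(\beta)\,M(\beta;V)\,d\beta,\qquad M(\beta;V):=\int_{E_m^-(V)}^{E_m^+(V)}\Gamma(\beta;\lambda,V)\,d\lambda.
\]
Since $\|q\|_\infty=\|q_U\|_{L^\infty(\bbT^\nu)}$, the operator norm of $\Lambda_m(\cdot;V)$ is controlled by $\|M(\cdot;V)\|_{L^1(\bbT^\nu)}$, so it suffices to prove $L^1(\bbT^\nu)$-continuity of $V\mapsto M(\cdot;V)$.

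The case $V_0\in D_m$ is handled directly: here $M(\cdot;V_0)\equiv 0$, and by the bound just proved, $\|M(\cdot;V)\|_{L^1}\le A(m)\gamma_m(V)\pi\to 0$ as $V\to V_0$ by the continuity of $\gamma_m$ from Theorem~\ref{thmGaps} (and local boundedness of $A(m)$). At $V_0\notin D_m$ the $m$-th gap stays open on a neighborhood, and on it $E_m^\pm$, $\tau_m$, $\lambda_m$, and the Dirichlet data $\mu_l(\beta;V)$ all depend continuously on $V$ (the latter via Lemma~\ref{propconvofdubrovinflows} and the inverse spectral construction from Section~\ref{sectionTranslation}). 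After the change of variable $\lambda=\tau_m(V)+(\gamma_m(V)/2)s$, the integrand for $M(\beta;V)$ becomes
\[
\frac{s-b(\beta;V)}{\sqrt{1-s^2}}\,\Phi_m\bigl(\beta;\tau_m+\tfrac{\gamma_m}{2}s\bigr),\qquad b(\beta;V):=\tfrac{2(\mu_m(\beta;V)-\tau_m(V))}{\gamma_m(V)}\in[-1,1],
\]
which is pointwise continuous in $V$ and uniformly dominated by $2A(m)(1-s^2)^{-1/2}$, integrable in $s$. Dominated convergence in $s$ gives pointwise continuity of $M(\beta;V)$ in $V$, and a second application in $\beta$ (with dominating function $A(m)\gamma_m\pi$) yields the required $L^1(\bbT^\nu)$-continuity.

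The main obstacle is arranging for the continuous dependence of $\mu_l(\beta;V)$ on $V$ to be uniform enough in $\beta$ to feed into dominated convergence; this is the point at which I rely on the Dubrovin-flow machinery of Section~\ref{sectionTranslation}, which gives continuous dependence of the potential (and hence of the Dirichlet data reconstructed along the hull) on the underlying spectral data.
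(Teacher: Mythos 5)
Your bound argument coincides with the paper's: the same termwise estimates on the product $\Phi_m$ in terms of $A(m)$ (via \eqref{Amdefinition}), the same bound $|\lambda-\mu_m|\le\gamma_m$, the same elementary arcsine integral over the gap, and pulling out $\|q\|_\infty$. Your overall continuity strategy — rewriting on the hull, splitting on $V_0\in D_m$ versus $V_0\notin D_m$, and applying dominated convergence twice — is also the paper's. (Two small slips: after the change of variables $\lambda=\tau_m+\tfrac{\gamma_m}{2}s$ your integrand is missing the Jacobian factor $\tfrac{\gamma_m}{2}$, and your dominating function should carry it too; but since $\gamma_m$ is locally bounded this is harmless for dominated convergence.)

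The one substantive divergence from the paper is how you obtain pointwise (in $\beta$ and $s$) convergence of the integrand in the open-gap case, and your stated justification for it does not quite work. You propose to deduce continuity of the Dirichlet data $\mu_l(\beta;V)$ in $V$ from Lemma~\ref{propconvofdubrovinflows} and "the inverse spectral construction from Section~\ref{sectionTranslation}." But those results run in the \emph{opposite} direction: they say that convergence of spectra and initial angles implies convergence of potentials, not that convergence of potentials implies convergence of Dirichlet data. The paper sidesteps the Dirichlet data altogether: it observes that $\Gamma(\beta;\cdot,V)$ is the diagonal Green's function for the potential $V_\beta$, and that uniform-on-compacts convergence $V_{n,\beta}\to V_\beta$ (which follows from $V_n\to V$ via Lemma~\ref{lemmaUniformCompacts}) forces convergence of Green's functions — this is essentially Lemma~\ref{lemma:mfunctionconv}. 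After the rescaling $\lambda_n=E_m^-(V_n)+\tfrac{\gamma_m(V_n)}{\gamma_m(V)}(\lambda-E_m^-(V))$, the paper gets $\Gamma(\beta;\lambda_n,V_n)\to\Gamma(\beta;\lambda,V)$ directly and is done. If you want to keep the route through individual $\mu_l$'s, the correct citation is again Lemma~\ref{lemma:mfunctionconv}: the Dirichlet eigenvalue $\mu_l$ is the pole of the Weyl function $m_+$ in the $l$-th gap, and $m$-function convergence implies convergence of these poles — but this is an extra layer that the Green's-function argument renders unnecessary. Finally, the concern you flag about needing uniformity in $\beta$ is unfounded; pointwise convergence in $\beta$ suffices because you already have the constant dominating function $A(m)\gamma_m\pi$ for the $\beta$-integral.
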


\begin{proof}
It is beneficial to rewrite the functional as an integral over $\beta \in \mathbb{T}^\nu$, instead of a spatial average over $x\in \mathbb{R}$. For this, we denote the sampling map $p \in C(\bbT^\nu)$ such that $q(x) = p(\omega x)$ for all $x\in \IR$, and write
\[
\Lambda_{m} (q;V) = \int_{\mathbb{T}^\nu} \left( \int_{E_m^-}^{E_m^+} \Gamma(\beta;\lambda,V) d\lambda \right) p(\beta) \, d\beta.
\]
Bounding the product formula \eqref{Craigproductformula} as in the proof of Lemma~\ref{lemma:productloglogm}, we conclude
\begin{equation}\label{Gammabound}
\lvert \Gamma(\beta;\lambda,V) \rvert \le A(m) \frac{\gamma_m}{\sqrt{(E_m^+ - \lambda)(\lambda - E_m^-)}}.
\end{equation}
Integrating this estimate along the gap gives
\begin{equation}\label{Gammaintbound}
\int_{E_m^-}^{E_m^+} \sup_{\beta\in\mathbb{T}^\nu} \lvert \Gamma(\beta;\lambda,V) \rvert d\lambda \le A(m) \gamma_m \pi
\end{equation}
which implies that $\Lambda_m(q;V)$ is well defined and bounded by \eqref{Lambdambound}.

Fix a sequence $\{V_n\}_{n=1}^\infty \in \mathcal{P}(\omega,\ve,\kappa_0)$ such that $V_n \to V$ uniformly on $\mathbb{R}$. Then $V_n = U_n(\omega x)$ and $U_n \to U$ uniformly on $\mathbb{T}^\nu$. To prove continuity, we separate cases.

If the $m$-th gap of $V$ is closed, apply \eqref{Lambdambound} to the $V_n$ and the universal estimate on $A(m)$ to conclude
\[
\lvert \Lambda_m(q;V_n) - \Lambda_m(q;V) \rvert = \lvert \Lambda_m(q;V_n) \rvert \le \pi C \lvert m\rvert^{C \log_2 \log_2 \lvert m\rvert}  \gamma_m(V_n) \lVert q \rVert_\infty.
\]
This shows the desired continuity at $V$, since $\gamma_m(V_n) \to \gamma_m(V) = 0$ as $n\to\infty$.

We assume from now on that $\gamma_m(V) > 0$. Since uniform convergence of potentials on compacts implies convergence of Green's functions,
\begin{equation}\label{Gammaconvpoint}
\lim_{n\to \infty} \Gamma(\beta; \lambda_n,V_n)  = \Gamma(\beta;\lambda,V)
\end{equation}
for any $\beta \in \mathbb{T}^\nu$ and any sequence $\lambda_n \to \lambda \in (E_m^-, E_m^+)$. If we make the choice of sequence
\[
\lambda_n = E_m^-(V_n) + \frac{\gamma_m(V_n)}{\gamma_m(V)} (\lambda - E_m^-(V)),
\]
then \eqref{Gammabound} provides the bound for dominated convergence to conclude
\[
\lim_{n\to\infty} \int_{E_m^-(V_n)}^{E_m^+(V_n)} \Gamma(\beta;\lambda,V_n) d\lambda = \int_{E_m^-(V)}^{E_m^+(V)} \Gamma(\beta;\lambda,V) d\lambda, \qquad \forall \beta\in \mathbb{T}^\nu.
\]
Another application of dominated convergence using the bound \eqref{Gammaintbound} implies that
\[
\lim_{n\to\infty} \int_{\mathbb{T}^\nu} \left( \int_{E_m^-(V_n)}^{E_m^+(V_n)} \Gamma(\beta;\lambda,V_n) d\lambda - \int_{E_m^-(V)}^{E_m^+(V)} \Gamma(\beta;\lambda,V) d\lambda \right) d\beta = 0,
\]
which immediately implies
\[
\lim_{n\to\infty} \sup_{q\in \mathcal{P}(\omega,\ve,\kappa_0) \setminus \{0\}} \frac { \lvert \Lambda_m(q;V_n) - \Lambda_m(q;V) \rvert}{\lVert q \rVert_\infty} = 0.
\]
By \eqref{eqnInftyNorms}, $\lVert q \rVert_\infty$ in the denominator can also be replaced by $\lVert q \rVert_{\infty,\kappa_0}$.
\end{proof}

To prove the other properties, we will use periodic approximation. For a periodic potential $\tilde V$, the action $I_{\mathfrak{m}}$ is defined by a contour integral in the Riemann surface which can be rewritten as an integral over the gap, see \cite[Proof of Theorem 7.1]{KP},
\[
I_{\mathfrak{m}}(\tilde V) = \frac 2\pi \int_{E_\mathfrak{m}^-}^{E_\mathfrak{m}^+} (-1)^{n(\mathfrak{m})-1} (\lambda - \lambda_{\mathfrak{m}}) \frac{ \Delta'(\lambda)}{\sqrt{\Delta^2(\lambda) - 4 }} d\lambda,
\]
where $n=n(\mathfrak{m})$ is given by \eqref{nofm}, $\Delta$ is the discriminant of $\tilde V$, and the branch of square root is such that the integrand is nonnegative. Using $\Delta(z) = 2 \cos(i Tw(z))$ and $w(z) = - y + i n(\mathfrak{m}) \pi$, $y>0$, for $z \in [E_\mathfrak{m}^-,E_\mathfrak{m}^+]$, we see that $\sin(iw(z)) =  i (-1)^{n(\mathfrak{m})} \sinh y$, so
\begin{align*}
\frac 2\pi \int_{E_\mathfrak{m}^-}^{E_\mathfrak{m}^+} (-1)^{n(\mathfrak{m})-1} (\lambda - \lambda_{\mathfrak{m}}) \frac{ \Delta'(\lambda)}{\sqrt{\Delta^2(\lambda) - 4 }} d\lambda 
& = \frac 2\pi \int_{E_\mathfrak{m}^-}^{E_\mathfrak{m}^+}  (\lambda - \lambda_{\mathfrak{m}} )  w'(\lambda) d\lambda
\end{align*}
and hence the standard definition in the periodic setting is equivalent to ours.

In the periodic case, $I_{\mathfrak{m}}$ is known to be analytic, with Fr\'echet derivative
\[
\partial_{\tilde V} I_{\mathfrak{m}} = \frac 2 \pi  \int_{E_\mathfrak{m}^-}^{E_\mathfrak{m}^+} (-1)^{n(\mathfrak{m})}  \frac{ \partial_{\tilde V}\Delta(\lambda)}{\sqrt{\Delta^2(\lambda) - 4 }} d\lambda  = \frac 2 {\pi}  \int_{E_\mathfrak{m}^-}^{E_\mathfrak{m}^+}  \partial_{\tilde V}w (\lambda) d\lambda.
\]
It is known that $\partial_{\tilde V} w(\lambda) = - \bbE\left( G(x,x;\lambda) q(x) \right)$ (see, e.g., \cite[Theorem 6.4]{JM}), so this implies that Prop.~\ref{propIm} holds for the periodic approximants. 

\begin{proof}[Proof of Prop.~\ref{propIm}]
The identity
\begin{equation}\label{eqnImdirectional}
I_m(V+q) - I_m(V) =  \frac 2 \pi \int_0^1 \Lambda_m(q;V+tq)\, dt
\end{equation}
holds for periodic approximants by the discussion above. Applying it to the sequence of periodic approximants $V^{(r)}$ and letting $r \to \infty$, the actions converge since the Marchenko--Ostrovskii maps are continuous, and the functionals $\Lambda_m$ converge by the same proof used to prove their continuity (Lemma~\ref{lemmaLambdamC}). Thus, the identity \eqref{eqnImdirectional} holds also for $V\in \mathring\cP(\omega,\ve,\kappa_0)$ and small enough $q\in \cB(\omega,\kappa_0)$. Since by Lemma~\ref{lemmaLambdamC}, the functional $\Lambda_m$ is continuous, the identity \eqref{eqnImdirectional} shows that $I_m$ is $C^1$ with  Fr\'echet derivative $\Lambda_m$.
\end{proof}

As an interlude, let us note that a trace formula can also be obtained by periodic approximation:

\begin{lemma} 
For any $V \in \mathcal{P}(\omega,\ve,\kappa_0)$, \eqref{eqnTraceFormula} holds.
\end{lemma}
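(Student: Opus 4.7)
The plan is to deduce the trace formula by periodic approximation, carrying it over from its well-known counterpart in periodic KdV theory to the quasiperiodic limit along the canonical approximants $V^{(r)}$, $\omega^{(r)}\to\omega$.

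First I would invoke the periodic identity: for each $r$, since $V^{(r)}$ is periodic,
\[
\sum_{\substack{\mathfrak{m}\in\mathfrak{Z}(\omega^{(r)})\\ \mathfrak{m}\omega^{(r)}>0}} 2\pi(\mathfrak{m}\omega^{(r)})\, I_{\mathfrak{m}}(V^{(r)}) \;=\; \tfrac{1}{2}\,\bbE\bigl((V^{(r)})^2\bigr)
\]
is the standard representation of the $L^2$-Hamiltonian in terms of the Flaschka--McLaughlin actions; cf.\ \cite{FMcL,KP}. It then suffices to verify that both sides converge to their quasiperiodic counterparts. For the right-hand side, grouping Fourier modes by coset gives $V^{(r)}(x) = \sum_{\mathfrak{m}} d^{(r)}(\mathfrak{m})\, e^{2\pi i(\mathfrak{m}\omega^{(r)})x}$ with $d^{(r)}(\mathfrak{m}) = \sum_{n\in\mathfrak{m}} c(n)$, so by Parseval $\bbE((V^{(r)})^2) = \sum_{\mathfrak{m}}|d^{(r)}(\mathfrak{m})|^2$; the Diophantine-in-a-box condition \eqref{eq:PAI7-5-8} guarantees that for each fixed $n$ the coset $n+\mathfrak{N}(\omega^{(r)})$ has no other low-order representatives for large $r$, so the exponential decay of $c(n)$ combined with dominated convergence yields $\bbE((V^{(r)})^2) \to \sum_n |c(n)|^2 = \bbE(V^2)$.

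For each individual summand on the left-hand side, Proposition~\ref{propConthm} (with the kernel-convergence machinery of Section~\ref{sectionGeometric}) delivers $E_\mathfrak{m}^{\pm,(r)}\to E_m^\pm$, $\lambda_\mathfrak{m}^{(r)}\to\lambda_m$, and pointwise convergence of $w^{(r)}$ off the spectrum, so a bounded convergence argument in the formula \eqref{eqnImactiondefinition2} gives $I_{\mathfrak{m}}(V^{(r)})\to I_m(V)$ and $\mathfrak{m}\omega^{(r)}\to m\omega$ for each fixed $m$ with $m\omega>0$.

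The hard part will be interchanging the infinite sum with $\lim_{r\to\infty}$, since the index set $\mathfrak{Z}(\omega^{(r)})$ itself depends on $r$. I would establish the uniform envelope
\[
0 \le I_{\mathfrak{m}}(V^{(r)}) \le \tfrac{2}{\pi}\, h_{\mathfrak{m}}^{(r)} \gamma_{\mathfrak{m}}^{(r)},
\]
which is immediate from $|\Re w^{(r)}(\lambda)|\le h_{\mathfrak{m}}^{(r)}$ on $[E_{\mathfrak{m}}^{-,(r)}, E_{\mathfrak{m}}^{+,(r)}]$ and the definition \eqref{eqnAction}. Together with the exponential bounds $h_{\mathfrak{m}}^{(r)},\gamma_{\mathfrak{m}}^{(r)}\le \ve^{1/2}e^{-\kappa_0|\mathfrak{m}|/5}$ from Theorem~\ref{thmtildef}(i) and Theorem~\ref{thmtildeb}(i), and the trivial bound $|\mathfrak{m}\omega^{(r)}|\le|\omega^{(r)}||\mathfrak{m}|$, each summand is dominated uniformly in $r$ by $C|\mathfrak{m}|\,\ve\, e^{-2\kappa_0|\mathfrak{m}|/5}$. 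Re-indexing the sum via representatives of cosets in $\zv$ and using the metric equivalence \eqref{equivalenceofmetrics}, dominated convergence closes the argument.
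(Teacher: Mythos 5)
Your argument is correct and follows essentially the same route as the paper: invoke the Flaschka--McLaughlin trace formula for the periodic approximants $V^{(r)}$, establish the dominating bound $I_{\mathfrak{m}}(V^{(r)}) \lesssim h_{\mathfrak{m}}^{(r)}\gamma_{\mathfrak{m}}^{(r)}$ together with uniform exponential decay of heights and gap sizes, and pass to the limit $r\to\infty$ by dominated convergence. The paper states this much more tersely (deriving $I_m \le 2\gamma_m h_m$ from \eqref{eqnImactiondefinition2} rather than directly from \eqref{eqnAction}, and not spelling out the re-indexing of the $r$-dependent coset group $\mathfrak{Z}(\omega^{(r)})$ or the Parseval argument for the convergence of the right-hand side), but your filled-in details are all sound and match what the paper leaves implicit.
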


\begin{proof}
The $T$-periodic approximants $V^{(r)}$ obey the trace formula \cite[Thm E.1]{KP}
\begin{equation}\label{eqnTraceFormulaPeriodic}
\sum_{n=1}^\infty 2 \pi \frac{n}T  I_n(V^{(r)}) = \frac 12 \bbE( (V^{(r)})^2). 
\end{equation}
The identity \eqref{eqnImactiondefinition2} implies $I_m \le 2 \gamma_m h_m$. Given existing uniform estimates on the gap sizes and slit heights of $V$ and its periodic approximants $V^{(r)}$, this provides a dominating function for the series, so from pointwise convergence  $I_{\mathfrak{m}}(V^{(r)})  \to I_m(V)$ we obtain convergence of the series. Thus, taking $r \to\infty$ in the trace formula \eqref{eqnTraceFormulaPeriodic} proves \eqref{eqnTraceFormula}.
\end{proof}

Our next goal is to characterize more precisely the behavior of the actions as a gap closes. We will show that the action goes to zero quadratically in the gap size, by showing that the expression $8 I_m / \gamma_m^2$ extends to a continuous function on $\mathring\cP(\omega,\ve,\kappa_0)$.

\begin{prop}\label{rhomC1}
The function
\[
\rho_m = \begin{cases}
\sqrt{ 8 I_m / \gamma_m^2 } & \gamma_m > 0 \\
\sqrt{ 2 \chi_m(\tau_m)} & \gamma_m = 0
\end{cases}
\]
 is a strictly positive, $C^1$ function on $\mathring\cP(\omega,\ve,\kappa_0)$.
\end{prop}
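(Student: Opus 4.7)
The plan is to show that $R_m := \rho_m^2$ is a strictly positive $C^1$ function on $\mathring\cP(\omega,\ve_1,\kappa_0)$; then $\rho_m = \sqrt{R_m}$ will inherit the $C^1$ property. Substituting the product formula \eqref{wderivativeproduct} into \eqref{eqnImactiondefinition2} and making the change of variables $\lambda = \tau_m + (\gamma_m/2)s$ yields, for $V \notin D_m$, the representation
\[
R_m(V) = \frac{8 I_m}{\gamma_m^2} = \frac{4}{\pi} \int_{-1}^{1} \frac{(s-b_m)^2}{\sqrt{1-s^2}}\, \chi_m\!\left(\tau_m + \tfrac{\gamma_m}{2} s; V\right)\, ds.
\]
Combining $b_m = O(\gamma_m)$ from Lemma~\ref{lemmabmC} with $\chi_m(\tau_m + (\gamma_m/2)s) - \chi_m(\tau_m) = O(\gamma_m)$ uniformly in $s$ from Lemma~\ref{lemmaproductlinear}, and using $\int_{-1}^1 s^2/\sqrt{1-s^2}\, ds = \pi/2$, extends $R_m$ continuously to $D_m$ with value $2\chi_m(\tau_m;V)$, matching the stipulated value there. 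Strict positivity will follow since $\chi_m(\tau_m;V)$ is a product of strictly positive terms with tails uniformly bounded via the estimates entering Lemma~\ref{lemma:productloglogm}.

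\noindent For $V \in \mathring\cP(\omega,\ve_1,\kappa_0) \setminus D_m$, applying the quotient rule to $R_m = 8 I_m/\gamma_m^2$ with Props.~\ref{propIm} and \ref{propgammam2C1} will directly give that $R_m$ is $C^1$ there, with
\[
(\partial_V R_m)(q) = \frac{8}{\gamma_m^2}\left[(\partial_V I_m)(q) - \tfrac{R_m}{8}(\partial_V \gamma_m^2)(q)\right].
\]
The main obstacle is to show this derivative extends continuously across $D_m$. Each of the two terms in the bracket is individually $O(\gamma_m)\lVert q\rVert_\infty$ as $\gamma_m \to 0$, so division by $\gamma_m^2$ would force divergence unless the leading $O(\gamma_m)$ contributions cancel.

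\noindent I will verify the cancellation by introducing $c_m(x) := 2(\mu_m(\omega x)-\tau_m)/\gamma_m \in [-1,1]$. Applying the same change of variables $\lambda=\tau_m+(\gamma_m/2)s$ to \eqref{imderivative} via \eqref{GreenformulaPhi}, and using $\Phi_m(\omega x;\tau_m+(\gamma_m/2)s) = \Phi_m(\omega x;\tau_m) + O(\gamma_m)$ together with $\int_{-1}^1 s/\sqrt{1-s^2}\,ds = 0$, will yield
\[
(\partial_V I_m)(q) = -\gamma_m\, \bbE\!\big( q(x)\, c_m(x)\, \Phi_m(\omega x;\tau_m)\big) + O(\gamma_m^2) \lVert q \rVert_\infty.
\]
The asymptotics $\psi^2(x;E_m^\pm) = (1\mp c_m(x))\Phi_m(\omega x;\tau_m)/\chi_m(\tau_m) + O(\gamma_m)$ derived in the proof of Prop.~\ref{proptaumC1} give $\psi^2(x;E_m^+)-\psi^2(x;E_m^-) = -2 c_m(x)\Phi_m(\omega x;\tau_m)/\chi_m(\tau_m) + O(\gamma_m)$, so combining with Prop.~\ref{propgammam2C1} and $R_m = 2\chi_m(\tau_m) + O(\gamma_m)$ will give
\[
\tfrac{R_m}{8}(\partial_V \gamma_m^2)(q) = -\gamma_m\, \bbE\!\big( q(x)\, c_m(x)\, \Phi_m(\omega x;\tau_m)\big) + O(\gamma_m^2)\lVert q\rVert_\infty.
\]
The two leading expressions coincide, so the bracket is $O(\gamma_m^2)\lVert q\rVert_\infty$ and $(\partial_V R_m)(q)$ stays uniformly bounded in any neighborhood of $D_m$. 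Keeping the next order in these Taylor expansions will identify the continuous extension of the Fr\'echet derivative to $D_m$; combined with an integration identity analogous to \eqref{eqnImdirectional}, this will upgrade $R_m$ to a $C^1$ function on the full open set, and taking the square root of the strictly positive $C^1$ function $R_m$ gives the claim for $\rho_m$.
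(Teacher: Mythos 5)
Your strategy mirrors the paper's exactly: write $R_m = 8I_m/\gamma_m^2$ via the change of variables $\lambda = \tau_m + (\gamma_m/2)s$, establish continuity from the first-order expansion and $b_m = O(\gamma_m)$, then use the quotient rule and verify leading-order cancellation of $8\partial_V I_m$ against $R_m\,\partial_V\gamma_m^2$. Your leading-order computation (with $c_m$ equal to the paper's $\cos\varphi_m$) is correct, and the paper's proof does exactly this.

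The gap is in the step you compress as ``keeping the next order in these Taylor expansions will identify the continuous extension.'' The next order does not come from Taylor expansions alone. The quantity $b_m = 2(\lambda_m - \tau_m)/\gamma_m$ enters the second-order computation of $\psi^2(x;E_m^\pm)$ through the normalizing factor $|\lambda_m - E_m^\pm|\,\chi_m(E_m^\pm)$, and after dividing by $\gamma_m^2$ the candidate Fr\'echet derivative contains a term proportional to $b_m/\gamma_m$. Lemma~\ref{lemmabmC}, which you cite, gives only $b_m = O(\gamma_m)$, i.e.\ $b_m/\gamma_m = O(1)$; this shows boundedness of $\partial_V R_m$ but does \emph{not} produce a limit at $D_m$, so it does not give a continuous extension. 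What is actually needed is the sharper Lemma~\ref{lemmabmC1}, namely $b_m = \tfrac{\gamma_m}{4}\,\partial_E\chi_m(\tau_m)/\chi_m(\tau_m) + O(\gamma_m^2)$, which is obtained by pushing the identity $\int_{E_m^-}^{E_m^+} w'(E)\,dE = 0$ to one more order (and likewise the second-order product expansion, Lemma~\ref{lemmaproductquadratic}, for $\Phi_m$ and $\chi_m$). With those two inputs, the paper computes explicitly $\partial_V \rho_m^2 = -2\partial_E\Phi_m(\omega x;\tau_m) + 2\bigl(\partial_E\chi_m(\tau_m)/\chi_m(\tau_m)\bigr)\Phi_m(\omega x;\tau_m) + O(\gamma_m)$, which \emph{is} a continuous extension. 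As written, your argument stalls at boundedness; you should invoke Lemma~\ref{lemmabmC1} and Lemma~\ref{lemmaproductquadratic} rather than Lemma~\ref{lemmabmC} and Lemma~\ref{lemmaproductlinear} for the differentiability step, keeping the latter pair only for the continuity of $R_m$ itself.
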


This requires a strengthening of Lemma~\ref{lemmaproductlinear}: 

\begin{lemma}\label{lemmaproductquadratic}
For $V \in \mathcal{P}(\omega,\ve,\kappa_0)$, fix $m\in\zv\setminus\{0\}$ and let $f(E;V) \in [E_m^-(V), E_m^+(V)] \to \mathbb{R}$ be given by the product formula \eqref{e.feVproduct} with arbitrary choices of $a_l(V) \in [E_l^-(V), E_l^+(V)]$.
\begin{enumerate}[(i)]
\item There is a universal constant $C$ such that $C^{-1} \le f(E;V) \le C$ and $-C \le \partial_E f(E;V) \le C$ for all $V$ and all $E \in [E_m^-(V), E_m^+(V)]$. Moreover,
\[
f(E;V) = f(\tau_m;V) + (E-\tau_m) \partial_E f(\tau_m) + O(\gamma_m^2), \qquad \gamma_m \to 0,
\]
uniformly in $V \in \mathcal{P}(\omega,\ve,\kappa_0)$ and $E \in [E_m^-(V), E_m^+(V)]$.
\item If the functions $a_l(V)$ are continuous for all $l$, and $E = E(V)$ is a continuous function such that $E(V) \in [E_m^-(V), E_m^+(V)]$, then $f(E(V); V)$ and $\partial_E f(E(V); V)$ are continuous in $V$.
\end{enumerate}
\end{lemma}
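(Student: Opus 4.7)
The plan is to mirror the strategy of Lemma~\ref{lemmaproductlinear}, but pushing one order further in the Taylor expansion. The starting point is the logarithmic derivative computed in that lemma,
\[
\frac{\partial_E f(E;V)}{f(E;V)} = \frac 12 \left( - \frac 1{E- E_0} + \sum_{l\neq m} \left( \frac 2{E-a_l} - \frac 1{E-E_l^-} - \frac 1{E-E_l^+} \right)\right),
\]
and differentiating once more gives
\[
\partial_E\!\left(\frac{\partial_E f}{f}\right) = \frac 12 \left( \frac 1{(E-E_0)^2} + \sum_{l\neq m} \left( -\frac 2{(E-a_l)^2} + \frac 1{(E-E_l^-)^2} + \frac 1{(E-E_l^+)^2} \right)\right).
\]
Each term in the new series can be written as the sum of two differences of the type $(E-a_l)^{-2}-(E-E_l^\pm)^{-2}$, and the mean value theorem bounds each such difference by $C\gamma_l/\eta_{m,l}^3$. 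Combined with the gap-band estimates and the polynomial lower bound on $\eta_{m,l}$, this yields uniform absolute convergence of the series, hence a universal bound $|\partial_E(\partial_E f/f)| \le C$ on $[E_m^-,E_m^+]$.

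From the identity $\partial_E^2 f = f\,\partial_E(\partial_E f/f) + (\partial_E f)^2/f$ together with the bounds on $f$ and $\partial_E f$ already proved in Lemma~\ref{lemmaproductlinear}(i), one obtains the uniform upper bound $|\partial_E^2 f(E;V)|\le C$ on $[E_m^-,E_m^+]$. Applying Taylor's theorem with integral remainder centered at $E=\tau_m$,
\[
f(E;V) = f(\tau_m;V) + (E-\tau_m)\,\partial_E f(\tau_m;V) + \int_{\tau_m}^E (E-s)\,\partial_E^2 f(s;V)\,ds,
\]
the remainder is bounded by $C(E-\tau_m)^2/2 \le C\gamma_m^2/8$, proving (i).

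For part (ii), continuity of $f(E(V);V)$ already follows from the argument of Lemma~\ref{lemmaproductlinear}(ii) applied at the point $E(V)$, once one notes that the uniform absolute convergence of the product in $l$ transfers continuity in each factor to continuity of the infinite product. Continuity of $\partial_E f(E(V);V)$ then reduces to continuity of the logarithmic derivative evaluated at $E(V)$; this is a series in $l$, each term of which is a continuous function of $V$, and the uniform bound $\sum_l \gamma_l/\eta_{m,l}^2 < \infty$ (coming, as before, from exponential decay of $\gamma_l$ and the Diophantine lower bound on $\eta_{m,l}$) provides the dominating function needed to interchange limit and sum.

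The main technical obstacle is verifying uniform convergence of the second-derivative series; concretely, one must check that the cancellation between the three terms built from $a_l, E_l^-, E_l^+$ improves the naive summand $\eta_{m,l}^{-2}$ to a summable $\gamma_l/\eta_{m,l}^3$, which requires a careful mean value estimate but no new input beyond the gap-band estimates already in use throughout the paper.
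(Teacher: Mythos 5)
Your proof is correct and takes essentially the same approach as the paper's: bound the second logarithmic derivative $\partial_E^2 \log f = \partial_E(\partial_E f/f)$ term-by-term using the cancellation between $a_l$ and $E_l^\pm$ (which is what the paper leaves implicit under ``analogous estimates''), combine with the bounds on $f$ and $\partial_E f/f$ from Lemma~\ref{lemmaproductlinear} to control $\partial_E^2 f$, and finish with Taylor's theorem with remainder; the dominated-convergence argument for part (ii) is also the intended one. The only point worth emphasizing, which you did correctly, is that the naive term-wise bound $\eta_{m,l}^{-2}$ is not summable because of the merely polynomial lower bound on gap distances, so the mean-value cancellation producing the factor $\gamma_l$ is genuinely needed.
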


\begin{proof}
We build upon the proof of Lemma~\ref{lemmaproductlinear} and the bounds on $f(E;V)$ and its logarithmic derivative given there. By analogous estimates, we obtain a universal bound on the second logarithmic derivative of $f(E)$,
\[
\partial_E^2 \log f(E;V) = \frac{\partial_E^2 f(E;V)}{f(E;V)} - \left( \frac{\partial_E f(E;V)}{f(E;V)} \right)^2
\]
which is sufficient to complete the proof.
\end{proof}

We also need a strengthening of Lemma~\ref{lemmabmC}, which shows not just that the critical point approaches the midpoint as the $m$-th gap shrinks, but also gives the first correction term to this approach:

\begin{lemma}\label{lemmabmC1}
The function $b_m$ has asymptotic behavior
\[
b_m = 2 \frac{\lambda_m - \tau_m}{\gamma_m} = \frac{\gamma_m}4 \frac{\partial_E \chi_m(\tau_m)}{\chi_m(\tau_m)} + O(\gamma_m^2), \qquad \gamma_m \to 0.
\]
\end{lemma}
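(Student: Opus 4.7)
The plan is to mimic the proof of Lemma~\ref{lemmabmC} but carry the expansion one order further. Starting from the identity
\[
0 = w(E_m^+) - w(E_m^-) = \int_{E_m^-}^{E_m^+} w'(E)\,dE = \int_{E_m^-}^{E_m^+} \frac{E-\lambda_m}{\sqrt{(E_m^+-E)(E-E_m^-)}} \chi_m(E)\,dE,
\]
perform the change of variables $E = \tau_m + \tfrac{\gamma_m}{2}s$, $s\in[-1,1]$, under which $E-\lambda_m = \tfrac{\gamma_m}{2}(s-b_m)$ and $\sqrt{(E_m^+-E)(E-E_m^-)} = \tfrac{\gamma_m}{2}\sqrt{1-s^2}$. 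This gives
\[
0 = \int_{-1}^1 \frac{s-b_m}{\sqrt{1-s^2}} \, \chi_m\!\left(\tau_m + \tfrac{\gamma_m}{2}s\right) ds.
\]

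Next I would invoke Lemma~\ref{lemmaproductquadratic} applied to $\chi_m$ (which has the product form \eqref{e.feVproduct} with $a_l = \lambda_l$) to obtain the quadratic-order expansion
\[
\chi_m\!\left(\tau_m + \tfrac{\gamma_m}{2}s\right) = \chi_m(\tau_m) + \tfrac{\gamma_m}{2}s\,\partial_E\chi_m(\tau_m) + O(\gamma_m^2),
\]
uniformly in $s\in[-1,1]$ and in $V$. Substituting this expansion into the previous integral identity yields
\[
0 = \chi_m(\tau_m)\int_{-1}^1 \frac{s-b_m}{\sqrt{1-s^2}}\,ds + \tfrac{\gamma_m}{2}\partial_E\chi_m(\tau_m)\int_{-1}^1 \frac{s(s-b_m)}{\sqrt{1-s^2}}\,ds + O(\gamma_m^2),
\]
where the error is uniform because $|b_m|\le 1$ forces $\int_{-1}^1 \tfrac{|s-b_m|}{\sqrt{1-s^2}}\,ds$ to be bounded by a universal constant.

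The elementary integrals $\int_{-1}^1\frac{ds}{\sqrt{1-s^2}}=\pi$, $\int_{-1}^1\frac{s\,ds}{\sqrt{1-s^2}}=0$, and $\int_{-1}^1\frac{s^2\,ds}{\sqrt{1-s^2}}=\pi/2$ reduce the display to
\[
0 = -\pi\,\chi_m(\tau_m)\,b_m + \tfrac{\pi\gamma_m}{4}\,\partial_E\chi_m(\tau_m) + O(\gamma_m^2).
\]
Since Lemma~\ref{lemmaproductquadratic}(i) also gives a uniform lower bound $\chi_m(\tau_m) \ge C^{-1} > 0$, dividing by $\pi\chi_m(\tau_m)$ gives the claimed asymptotics. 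No single step is genuinely obstructive; the only subtlety is ensuring that the error term in the expansion of $\chi_m$ is uniform in $V$ and $s$, which is precisely what Lemma~\ref{lemmaproductquadratic} was designed to supply.
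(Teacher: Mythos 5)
Your proposal is correct and follows essentially the same route as the paper's proof: the same starting identity $\int_{E_m^-}^{E_m^+} w'\,dE = 0$, the same change of variables $E = \tau_m + \tfrac{\gamma_m}{2}s$, the same invocation of Lemma~\ref{lemmaproductquadratic} to expand $\chi_m$ to second order, and the same elementary trigonometric integrals. The only cosmetic difference is that you divide out the overall factor $\tfrac{\gamma_m}{2}$ at the start (so your error term reads $O(\gamma_m^2)$ where the paper writes $O(\gamma_m^3)$ before dividing), and you make explicit the observation that $|b_m|\le 1$ keeps $\int_{-1}^1 |s-b_m|/\sqrt{1-s^2}\,ds$ universally bounded; neither changes the substance.
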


\begin{proof}
Using Lemma~\ref{lemmaproductquadratic}, we write
\[
\chi_m(E) = \chi_m(\tau_m) + (E-\tau_m) \partial_E \chi_m(\tau_m) + O(\gamma_m^2).
\]
Observe that
\[
0 = w(E_m^+) - w(E_m^-) = \int_{E_m^-}^{E_m^+} w'(E) dE
\]
and use the change of variables $E = \tau_m + \frac {\gamma_m}2 s$ to rewrite this as
\begin{align*}
0 & = \frac{\gamma_m}2 \int_{-1}^1 \frac{b_m-s}{\sqrt{1-s^2}} (\chi_m(\tau_m) +  \frac{\gamma_m}2 s \partial_E \chi_m(\tau_m) + O(\gamma_m^2)) ds \\
& = \frac{\gamma_m}2 \chi_m(\tau_m) \int_{-1}^1  \frac{b_m-s}{\sqrt{1-s^2}} ds + \frac{\gamma_m^2}4 \partial_E\chi_m(\tau_m) \int_{-1}^1  \frac{b_m-s}{\sqrt{1-s^2}} s ds  + O(\gamma_m^3)  \\
&  = \frac{\gamma_m}2 \chi_m(\tau_m) \pi b_m  - \frac{\gamma_m^2}8 \pi \partial_E\chi_m(\tau_m) + O(\gamma_m^3)
\end{align*}
which implies the lemma.
\end{proof}

\begin{proof}[Proof of Prop.~\ref{rhomC1}]
To prove continuity of $\rho_m^2$, the only nontrivial case is to prove convergence of $\rho_m^2$ along a sequence $V_n$ with $\gamma_m(V_n) > 0$ but $\gamma_m(V_n) \to 0$ as $n\to\infty$. We write
\[
I_m = \frac 2{\pi} \int_{E_m^-}^{E_m^+} (E-\lambda_m) \frac{dw}{d E} dE =  \frac 2{\pi} \int_{E_m^-}^{E_m^+} \frac{(E - \lambda_m)^2}{\sqrt{(E_m^+ - E)(E-E_m^-)}} \chi_m(E) \, dE.
\]
We use the change of variables
\begin{equation}\label{Eschange}
E = \tau_m + s \gamma_m / 2, \qquad s \in [-1,1]
\end{equation}
to write
\[
I_m =\frac 1{2\pi} \gamma_m^2 \int_{-1}^1 \frac{(s-b_m)^2}{\sqrt{1-s^2}} \chi_m(\tau_m + s \gamma_m /2) \, ds
\]
and therefore, on the complement of $D_m$,
\begin{equation}\label{rhomintegral}
\rho_m^2 = 8 \frac{I_m}{\gamma_m^2} =\frac 4{\pi} \int_{-1}^1 \frac{(s-b_m)^2}{\sqrt{1-s^2}} \chi_m(\tau_m + s \gamma_m /2) \, ds.
\end{equation}
Using $\chi_m(\tau_m+s\gamma_m/2) = \chi_m(\tau_m) + O(\gamma_m)$ and Lemma~\ref{lemmabmC} proves that
\[
\rho_m^2 = \frac 4\pi \chi_m(\tau_m) \int_{-1}^1 \frac{s^2}{\sqrt{1-s^2}} ds + O(\gamma_m) = 2 \chi_m(\tau_m) + O(\gamma_m),
\]
which proves continuity at points in $D_m$. Thus, $\rho_m^2$ is continuous.

For differentiability, we need a higher order version of the above analysis: starting from \eqref{rhomintegral} but using Lemma~\ref{lemmaproductquadratic},
\[
\chi_m(\tau_m + s \gamma_m /2 ) = \chi_m(\tau_m) + \frac {\gamma_m}2 s \partial_E \chi_m(\tau_m) + O(\gamma_m^2)
\]
gives
\begin{align*}
\rho_m^2 & = \frac 4{\pi} \int_{-1}^1 \frac{(s-b_m)^2}{\sqrt{1-s^2}} \left( \chi_m(\tau_m) + \frac {\gamma_m}2 s \partial_E \chi_m(\tau_m) + O(\gamma_m^2) \right) \, ds \\
 & = \frac 4{\pi} \int_{-1}^1 \left( \frac{s^2}{\sqrt{1-s^2}} \chi_m(\tau_m) - 2 b_m  \frac{s}{\sqrt{1-s^2}} \chi_m(\tau_m) + \frac {\gamma_m}2 \frac{s^3}{\sqrt{1-s^2}} \partial_E \chi_m(\tau_m)  + O(\gamma_m^2)\right) \, ds  \\
 & = 2 \chi_m(\tau_m) + O(\gamma_m^2),
\end{align*}
where we used
\[
\int_{-1}^1 \frac 1{\sqrt{1-s^2}} ds = \pi, \quad \int_{-1}^1 \frac s{\sqrt{1-s^2}} ds = 0, \quad \int_{-1}^1 \frac {s^2}{\sqrt{1-s^2}} ds = \frac \pi 2, \quad \int_{-1}^1 \frac {s^3}{\sqrt{1-s^2}} ds = 0.
\]
Using this, let us prove that the derivative
\begin{align*}
\partial_V \rho_m^2 & = \frac 1{\gamma_m^2} \left( 8 \partial_V I_m - \rho_m^2 \partial_V \gamma_m^2 \right) 
\end{align*}
extends continuously to $\mathcal{P}(\omega,\ve,\kappa_0)$. Using \eqref{GreenformulaPhi} and the change of variables \eqref{Eschange} and applying Lemma~\ref{lemmaproductquadratic} to $\Phi_m(\omega x;E)$ for all $x$ gives
\begin{align}
\partial_V I_m & = \frac 2\pi \int_{E_m^-}^{E_m^+} \frac{E-\mu_m(\omega x)}{\sqrt{(E_m^+ - E)(E-E_m^-)}} \Phi_m(\omega x;E) \, dE \nonumber \\
& = \frac {\gamma_m}\pi  \int_{-1}^1 \frac{s - \cos \varphi_m(\omega x)}{\sqrt{1-s^2}} \Phi_m(\omega x;\tau_m+\frac{\gamma_m}2 s) \, ds \nonumber \\
& = \frac {\gamma_m}\pi \int_{-1}^1 \frac{s-\cos \varphi_m(\omega x)}{\sqrt{1-s^2}} \Phi_m(\omega x;\tau_m) \, ds 
+ \frac{\gamma_m^2}{2 \pi} \int_{-1}^1 \frac{s-\cos \varphi_m(\omega x) }{\sqrt{1-s^2}} \partial_E \Phi_m(\omega x;\tau_m) s \, ds + O(\gamma_m^3) \nonumber \\
& = - \gamma_m \cos\varphi_m(\omega x) \Phi_m(\omega x;\tau_m) + \frac {\gamma_m^2}4 \partial_E \Phi_m(\omega x;\tau_m) + O(\gamma_m^3). \label{rhomderivative1}
\end{align}
Note that the estimate in Lemma~\ref{lemmaproductquadratic} persisted through the integration since the integrand obeys a uniform bound times a square root singularity at the edges.  Now write
\begin{align*}
\frac 2{\gamma_m} \lvert E_m^\pm - \lambda_m \rvert\chi_m(E_m^\pm) & = (1\mp b_m)(\chi_m(\tau_m) \pm \frac{\gamma_m}2 \partial_E \chi_m(\tau_m) + O(\gamma_m^2)) \\
& = \chi_m(\tau_m) \mp b_m \chi_m(\tau_m) \pm \frac{\gamma_m}2 \partial_E \chi_m(\tau_m) + O(\gamma_m^2) \\
& = \chi_m(\tau_m) \pm \frac{\gamma_m}4 \partial_E \chi_m(\tau_m) + O(\gamma_m^2)
\end{align*}
which implies that
\begin{align*}
\frac 1{ \frac 2{\gamma_m} \lvert E_m^\pm - \lambda_m \rvert\chi_m(E_m^\pm) } & = \frac 1{\chi_m(\tau_m)} \left(  1 \mp \frac{\gamma_m}4 \frac{\partial_E \chi_m(\tau_m)}{\chi_m(\tau_m)} \right) + O(\gamma_m^2).
\end{align*}
Similarly,
\begin{align}
\frac 2{\gamma_m} \lvert E_m^\pm - \mu_m(\beta) \rvert \Phi_m(\beta;E_m^\pm) & = (1 \mp \cos \varphi_m(\beta) ) \left( \Phi_m(\beta;\tau_m) \pm \frac{\gamma_m}2 \partial_E \Phi_m(\beta;\tau_m) + O(\gamma_m^2) \right) . \label{product30}
\end{align}
Multiplying these two equalities,
\begin{align*}
\frac {\lvert E_m^\pm - \mu_m(\beta) \rvert \Phi_m(\beta;E_m^\pm)}{\lvert E_m^\pm - \lambda_m \rvert\chi_m(E_m^\pm) }  & =  \frac 1{\chi_m(\tau_m)} \Biggl( (1 \mp \cos \varphi_m(\beta) ) \left( \Phi_m(\beta;\tau_m) \pm \frac{\gamma_m}2 \partial_E \Phi_m(\beta;\tau_m) \right) \\
& \qquad \mp \frac{\gamma_m}4 \frac{\partial_E \chi_m(\tau_m)}{\chi_m(\tau_m)} \left( 1 \mp \cos\varphi_m(\beta)  \right)\Phi_m(\beta;\tau_m) \Biggr)  + O(\gamma_m^2).
\end{align*}
Subtracting this with $+$ minus with $-$ gives
\begin{align*}
& \frac {\lvert E_m^+ - \mu_m(\beta) \rvert \Phi_m(\beta;E_m^+)}{\lvert E_m^+ - \lambda_m \rvert\chi_m(E_m^+) } - \frac {\lvert E_m^- - \mu_m(\beta) \rvert \Phi_m(\beta;E_m^-)}{\lvert E_m^- - \lambda_m \rvert\chi_m(E_m^-) }  \\
& =  \frac 2{\chi_m(\tau_m)} \Biggl( - \cos\varphi_m(\beta) \Phi_m(\beta;\tau_m) + \frac{\gamma_m}2 \partial_E \Phi_m(\beta;\tau_m)  - \frac{\gamma_m}4 \frac{\partial_E \chi_m(\tau_m)}{\chi_m(\tau_m)} \Phi_m(\beta;\tau_m) \Biggr) + O(\gamma_m^2).
\end{align*}
Therefore
\begin{equation}\label{rhomderivative2}
\rho_m^2 2 \gamma_m(\partial_V E_m^+ - \partial_V E_m^-) = 8 \gamma_m \Biggl( - \cos\varphi_m(\beta) \Phi_m(\beta;\tau_m) + \frac{\gamma_m}2 \partial_E \Phi_m(\beta;\tau_m)  - \frac{\gamma_m}4 \frac{\partial_E \chi_m(\tau_m)}{\chi_m(\tau_m)} \Phi_m(\beta;\tau_m) \Biggr) + O(\gamma_m^3).
\end{equation}

Subtracting \eqref{rhomderivative2} from $8$ times \eqref{rhomderivative1} gives
\[
8 \partial_V I_m - \rho_m^2 \partial_V \gamma_m^2 =  - 2 \gamma_m^2 \partial_E \Phi_m(\omega x;\tau_m) + 2 \gamma_m^2 \frac{\partial_E \chi_m(\tau_m)}{\chi_m(\tau_m)} \Phi_m(\beta;\tau_m)  + O(\gamma_m^3)
\]
and dividing by $\gamma_m^2$ shows that
\[
\partial_V \rho_m^2 =  - 2 \partial_E \Phi_m(\omega x;\tau_m) + 2 \frac{\partial_E \chi_m(\tau_m)}{\chi_m(\tau_m)} \Phi_m(\omega x;\tau_m)  + O(\gamma_m),
\]
so $\partial_V \rho_m^2$ extends continuously to all of $\mathcal{P}(\omega,\ve,\kappa_0)$.
\end{proof}

\begin{proof}[Proof of Theorem~\ref{thmActions}]
This is contained in the results proved in this section.
\end{proof}

\bibliographystyle{amsplain}
%\bibliography{BDGL2}

\providecommand{\bysame}{\leavevmode\hbox to3em{\hrulefill}\thinspace}
\providecommand{\MR}{\relax\ifhmode\unskip\space\fi MR }
% \MRhref is called by the amsart/book/proc definition of \MR.
\providecommand{\MRhref}[2]{%
  \href{http://www.ams.org/mathscinet-getitem?mr=#1}{#2}
}
\providecommand{\href}[2]{#2}

\end{document}